\def\hB{\hspace*{\fill}$\qed$}
\title{$\mathclap{\text{Topological equivariant coarse $\boldsymbol{K}$-homology}}$
}
\author{
Ulrich Bunke\thanks{Fakult{\"a}t f{\"u}r Mathematik,
Universit{\"a}t Regensburg,
93040 Regensburg,
GERMANY\newline
\href{mailto:ulrich.bunke@mathematik.uni-regensburg.de}{ulrich.bunke@mathematik.uni-regensburg.de}}  
\and
Alexander Engel\thanks{Universit\"at Greifswald,
Walther-Rathenau-Strasse 47,
17489 Greifswald,
GERMANY\newline
\href{mailto:alexander.engel@uni-greifswald.de}{alexander.engel@uni-greifswald.de}}}
\numberwithin{equation}{section}
\newtheorem{theorem}{Theorem}[section] 
\newtheorem{prop}[theorem]{Proposition}
\newtheorem{lem}[theorem]{Lemma}
\newtheorem{ddd}[theorem]{Definition}
\newtheorem{kor}[theorem]{Corollary}
\newtheorem{ass}[theorem]{Assumption}
\theoremstyle{remark}
\theoremstyle{definition}
\newtheorem{ex}[theorem]{Example}
\newtheorem{rem}[theorem]{Remark}
 \newcommand{\ndaCcat}{C^{*}\mathbf{Cat}^{\mathrm{nu}}_{\mathrm{\infty add,ndeg}}}
\newcommand{\ndCcat}{C^{*}\mathbf{Cat}^{\mathrm{nu}}_{\mathrm{ndeg}}}
\newcommand{\ndeCcat}{C^{*}\mathbf{Cat}^{\mathrm{nu}}_{\mathrm{eadd,ndeg}}}
\newcommand{\ndasCcat}{C^{*}\mathbf{Cat}^{\mathrm{nu}}_{\mathrm{sadd,ndeg}}}
\newcommand{\Kast}{\mathrm{K}^{\mathrm{C}^{*}}}
\newcommand{\bCg}{\mathbf{ C}^{G}}
\newcommand{\bCgc}{\mathbf{ C}_{\mathrm{pt}}^{G}}
\newcommand{\bCgsm}{\mathbf{ C}^{G}_{\mathrm{sm}}}
\newcommand{\bCgsmc}{\mathbf{ C}_{\mathrm{lf}}^{G }}
\newcommand{\pt}{\mathrm{pt}}
\newcommand{\LF}{\mathrm{LF}}
\newcommand{\bCgtsm}{ \mathbf{ \bar C}_{\mathrm{sm}}^{G,\mathrm{ctr}}}
   \newcommand{\Cgtsm}{ \mathbf{ C}_{\mathrm{sm}}^{G,\mathrm{ctr}}}
   \newcommand{\bCgtsmc}{ \mathbf{\bar C}_{\mathrm{lf}}^{G,\mathrm{ctr}}}
     \newcommand{\bCgtsmctr}{ \mathbf{\bar C}_{\mathrm{lf},\mathrm{tr}}^{G,\mathrm{ctr}}}
     \newcommand{\bCGtsmc}{ \mathbf{\bar C}_{\mathrm{lf},G}^{ \mathrm{ctr}}}
      \newcommand{\Cgtsmc}{ \mathbf{ C}_{\mathrm{lf}}^{G,\mathrm{ctr}}}
       \newcommand{\bCtsmc}{ \mathbf{\bar C}_{\mathrm{lf}}^{\mathrm{ctr}}}
          \newcommand{\bCtsm}{\mathbf{\bar C}^{\mathrm{ctr}}_{\mathrm{sm}}}
             \newcommand{\btCtsm}{\mathbf{\tilde{  \bar{C}}}^{\mathrm{ctr}}_{\mathrm{sm}}}
                \newcommand{\btCtsmc}{\mathbf{\tilde { \bar{C}}}^{\mathrm{ctr}}_{\mathrm{lf}}}
  \newcommand{\bCGtsm}{ \mathbf{\bar C}_{\mathrm{sm},G}^{ \mathrm{ctr}}}
\newcommand{\bChtsmc}{\mathbf{\bar C}_{\mathrm{lf}}^{H,\mathrm{ctr}}}
\newcommand{\Homol}{\mathrm{Hg}}
\newcommand{\bJ}{\mathbf{J}}
\newcommand{\All}{\mathbf{All}}
\newcommand{\Yo}{\mathrm{Yo}}
\newcommand{\Res}{\mathtt{Res}}
\newcommand{\Orb}{\mathbf{Orb}}
\newcommand{\Hilb}{\mathbf{Hilb}}
\newcommand{\BC}{\mathbf{BornCoarse}}
\newcommand{\Fin}{\mathbf{Fin}}
\newcommand{\Ob}{\mathrm{Ob}}
\newcommand{\bB}{{\mathbf{B}}}
\newcommand{\bL}{\mathbf{L}}
\newcommand{\bM}{\mathbf{M}}
\newcommand{\cP}{\mathcal{P}}
\renewcommand{\Proj}{\mathrm{Proj}}
\newcommand{\bA}{{\mathbf{A}}}
\newcommand{\cO}{{\mathcal{O}}}
\newcommand{\cU}{{\mathcal{U}}}
\newcommand{\cY}{{\mathcal{Y}}}
 \newcommand{\Cat}{{\mathbf{Cat}}}
\newcommand{\KX}{K\!\mathcal{X}}
\newcommand{\Idem}{\mathrm{Idem}}
\newcommand{\Born}{\mathbf{Born}}
\newcommand{\Coarse}{\mathbf{Coarse}}
\newcommand{\IC}{\mathbb{C}}
\newcommand{\Ccat}{{\mathbf{C}^{\ast}\mathbf{Cat}}}
\newcommand{\Calg}{{\mathbf{C}^{\ast}\mathbf{Alg}}}
\newcommand{\bP}{\mathbf{P}}
\newcommand{\op}{\mathrm{op}}
\newcommand{\nCcat}{C^{*}\mathbf{Cat}^{\mathrm{nu}}}
\renewcommand{\Ccat}{C^{*}\mathbf{Cat}}
\newcommand{\alg}{\mathrm{alg}}
\newcommand{\nClincat}{\mathbf{{}^{*}\Cat^{\mathrm{nu}}_{\C}}}
\newcommand{\Clincat}{\mathbf{{}^{*}\Cat_{\C}}}
\renewcommand{\Calg}{C^{*}\mathbf{Alg}}
\newcommand{\nCalg}{C^{*}\mathbf{Alg}^{\mathbf{nu}}}
\renewcommand{\nCalg}{C^{*}\mathbf{Alg}^{\mathrm{nu}}}
 \newcommand{\Kcat}{\mathrm{K^{C^{*}Cat}}}
\newcommand{\Ass}{\mathrm{Asmbl}}
\begin{document} 
\maketitle\begin{abstract}
For a $C^{*}$-category with a strict $G$-action we construct examples of equivariant coarse homology theories.  
To this end we first introduce versions of Roe categories of objects  in $C^{*}$-categories which are  controlled  over bornological coarse spaces,
and then apply a homological functor. 
These  equivariant coarse homology theories are then employed to verify that certain functors on the orbit category are CP-functors. This fact has consequences for the injectivity of assembly maps.
\end{abstract}
 \tableofcontents
\setcounter{tocdepth}{5}

 
 \newcommand{\sep}{\mathrm{sep}}
 \newcommand{\free}{\mathrm{free}}
  \newcommand{\fg}{\mathrm{fg}}
\newcommand{\fin}{\mathrm{fin}}

\section{Introduction}

Let $G$ be a group.
In this paper we construct and study  several  versions    of $G$-equivariant coarse  homology theories listed in  \eqref{qefefewfefqff} below which are equivariant analogs of the coarse topological $K$-homology $K\!\cX$ constructed in \cite{buen}. They depend on the choices   of a coefficient    $C^{*}$-category  $  \bC$ with $G$-action and 
 a homological functor $\Homol \colon \nCcat\to \bM$. The main example of a homological functor  is the $K$-theory functor for  possibly non-unital $C^{*}$-categories $\Kcat \colon \nCcat\to \Sp$.

The present paper focusses on those details of the construction of the homology theories which are related {to} coarse geometry.  
 For the   results about  $C^{*}$-categories   going into the construction and  the notion of a homological functor we refer to  the companion papers \cite{cank}, \cite{crosscat} which were partly written to provide the background for the present paper. 
 In addition,  the main definitions and facts will be recalled in Section \ref{qergijfioqrfqwedwedqwdq}.

{One of the main applications} of the  coarse homology theory   $\Kcat \bC\cX^{G}_{c}$ introduced  in the present paper is the verification of the CP-condition \cite[Def.~1.8]{desc} for the functors  on the orbit category of $G$ 
 defined in  \cite[Def.\ 19.12]{cank}. As shown in  \cite{desc}
   this CP-condition implies split-injectivity results for assembly maps. This application will be discussed  in  some detail in this introduction further below.
 For  further applications to index theory we refer to \cite{indexclass,bl}. 
 The results of the present paper are the crucial technical input for the discussion of equivariant Paschke duality  in \cite{bel-paschke}.

In the present paper we work in the setup of equivariant coarse homotopy theory  as developed in  \cite{equicoarse} which generalizes the non-equivariant case from  \cite{buen}.  Its basic category is  the category 
  $G\BC$   of $G$-bornological coarse spaces introduced in   Definition \ref{gjwerogijwoergwergwergweg}. 
  By   Definition \ref{wefuihqfwefwffqwefefwq} an equivariant coarse homology theory   is a functor
$$E\colon G\BC\to \bM$$ with a stable target
 $\infty$-category $\bM$ (e.g.\ the category $\Sp$ of spectra),
 which sends coarse equivalences to equivalences  and annililates flasque spaces. It must furthermore satisfy
an excisiveness condition and the  condition of $u$-continuity. 
In Section  \ref{qriogfhjqowerfweqfqewfewfqewf} we give a complete and selfcontained  introduction to  the notions from coarse geometry and homotopy theory of bornological coarse spaces  which are relevant for the present paper.
 For different   setups and axioms for coarse homology theories see \cite{MR1834777,Heiss:2019aa,wulff_axioms}.

The output of our construction are four   functors
\begin{equation}\label{qefefewfefqff}
\Homol \bC \cX^{G}, \Homol \bC \cX_{c}^{G},\Homol\bC\cX_{G},  \Homol\bC\cX_{c,G} \colon G\BC\to \bM
\end{equation}
which are all equivariant coarse homology theories, see Sections \ref{qergiuhquifewdqewdewdqwedqewd} and \ref{ewtiguhiufhriuewdwedqwdewd}.  
Our construction is basically  a generalization of the construction of the non-equivariant coarse $K$-homology $\KX \colon \BC\to \Sp$ in \cite{buen} to the equivariant case and for general coefficients.  Thereby we follow the structure of
the construction of coarse homology theories via controlled objects  in left exact $\infty$-categories in  \cite{unik}.

%
%
One of the main achievements of \cite{buen} is the construction of a refinement of Roe's coarse $K$-homology to a spectrum-valued coarse homology theory defined the category $\BC$ of bornological coarse spaces.
In  \cite{buen,ass} we observed that an interpretion of  the coarse assembly map as a natural transformation between such spectrum-valued coarse homology theories allows for a considerable  simplification of the formal structure of  the proof of the coarse Baum-Connes conjecture  given in \cite{nw1}. 
%
%
The equivariant version of the coarse assembly map (also known as forget-control map) turned out to be a tool to study a different sort of assembly maps. Given a functor $$M \colon G\Orb\to \bM$$ with a cocomplete target $\bM$ and two families of subgroups $\cF$ and $\cF'$ of $G$ such that $\cF\subseteq \cF'$ we  have the assembly map 
 \begin{equation}\label{ewrviejv98vffdvsfdvfsdv}
\Ass_{\cF,M}^{\cF'} \colon \colim_{G_{\cF}\Orb} M\to \colim_{G_{\cF'}\Orb} M\, ,
\end{equation}
where $G_{\cF}\Orb$ is the full subcategory of $G\Orb$ of orbits with stabilizers in $\cF$.
One of the driving force   for  developing an equivariant coarse homotopy  theory   was the study of injectivity and isomorphism results for assembly maps  \eqref{ewrviejv98vffdvsfdvfsdv}  for various choices of the functor $M$ and families of subgroups.
  Initially this development was a part of 
 controlled topology
\cite{calped}, see also  \cite{blr,MR3598160,RTY}. The connection with 
coarse geometry    is  explained in \cite{higson_pedersen_roe}.

We say that 
  a coarse homology theory $E \colon G\BC\to \bM$ extends the functor $M$ if  for every $S$ in $G\Orb$  we have a natural equivalence  \begin{equation}\label{wefweqfqwfqwefqff}
M(S)\simeq E(S_{min,max}\otimes G_{can,min})\, ,
\end{equation}  where
 $S_{min,max}$ is the  $G$-bornological coarse space  given by the $G$-orbit $S$ with the minimal coarse structure and the maximal bornology, and $G_{can,min}$ is the group $G$ with the canonical coarse structure and the minimal bornology.   
Such an extension of $M$ to a coarse homology theory is one of the basic 
 ingredients of the approach of  \cite{blr} (isomorphism result, see also \cite{fj}) or \cite{RTY}
 (injectivity result). In \cite[Def.\ 1.8]{desc} we axiomatized the additional conditions on $E$ needed to show that $\Ass_{\Fin,M}^{\All}$ is split-injective by the notion of a CP-functor.
 
  Hence more examples of coarse homology theories $E$  leading to CP-functors $M$ via \eqref{wefweqfqwfqwefqff}  lead to more examples of functors on the orbit category for which one can show  split  injectivity of the assembly map by the method of  \cite{desc}. Examples  are    coarse ordinary homology and coarse algebraic $K$-theory with coefficients in an additive category with strict $G$-action  \cite{equicoarse} (the CP-conditions are verified in  \cite{desc}), 
 the  equivariant  coarse version of   Waldhausen algebraic $K$-theory of spaces 
\cite{Bunke:aa}, and the  equivariant coarse $K$-homology with coefficients in a left-exact $\infty$-category with $G$-action constructed in 
  \cite{unik}.  The present paper contributes  $\Kcat \bC\cX^{G}_{c}$ as a further example. We refer to Theorem \ref{wergiojerfqerwfqwefwefewfqef} for a precise statement.

 For a $C^{*}$-algebra $A$ with $G$-action
 we can consider the $C^{*}$-category $  \bC \coloneqq  \Hilb_{c}( A)$ of   Hilbert-$A$-modules and compact operators with the $G$-action described in 
 Example \ref{dqedqwdqwd1234frgfrfwerger}. We then consider the functor 
 $$\Kcat \bC_{G,r} \colon G\Orb\to \Sp$$ introduced in Definition \ref{qergijoqergwrefweqfwqefwedewdqewd}. By   Proposition \ref{qergioqefweqwecqcasdc} this functor is extended (in the sense explained above) by   the
  coarse homology theory $\Kcat\bC\cX^{G}_{c}$.  
   By Theorem \ref{wergiojerfqerwfqwefwefewfqef} it is a CP-functor. 
 In particular the split injectivity results from 
  \cite{desc} (see also \cite[Sec.\ 6.5]{unik} for a quick review) apply to $ \Kcat \bC_{G,r}$. 
 But note the following remark which explains that the above  can not extend the class of groups $G$ for which  split-injectivity results are known.
     
     \begin{rem}
     If $A$ is unital and has the trivial  $G $-action, then by 
   \cite[Prop. 19.18]{cank}    the functor $\Kcat \bC_{G,r}$ can be identified with 
 the functor $K^{\mathrm{DL},G}_{A} \colon G\Orb\to \Sp$ introduced by Davis--L\"uck  \cite{davis_lueck}.\footnote{To be precise, in \cite{davis_lueck} only the case $A=\C$ is considered but the generalization to general unital $C^{*}$-algebras $A$ is straightforward.} We can thus consider  $\Kcat \bC_{G,r}$ as a 
 generalization of the  Davis--L\"uck construction to $C^{*}$-algebras with  non-trivial $G$-action. An independent construction of such a functor 
   $K^{\mathrm{DL},G}_{A}$  for  $C^{*}$-algebras with $G$-action $  A$ has been given recently by Kranz \cite{kranz}, see also \cite{bel-paschke}.
   Note also that for every $C^{*}$-category $\bC$ with $G$-action there exists a suitable $C^{*}$-algebra with $G$-action $A$  and an equivalence
   $\Kcat \bC_{G,r}\simeq K^{\mathrm{DL},G}_{A}$. Hence taking $C^{*}$-categories instead of $C^{*}$-algebras  with $G$-action as coefficients does not provide additional equivalence  classes of functors on the orbit category.
 

%

By  passing to homotopy groups  the assembly map $\Ass_{\Fin,K^{\mathrm{DL},G}_{ A} }^{\All}$   becomes isomorphic   to the usual   Baum--Connes assembly map
\begin{equation}\label{rfkqfoewdwdqdewdqdewwqdedqewd}
\mu^{BC}_{A,*} \colon  RK_{*}^{G}(E_{\Fin}G,A)\to K_{*}^{C^{*}}(A\rtimes_{r}G) 
\end{equation}
defined via Kasparov's $K\!K$-theory, 
where the domain denotes the equivariant,  locally finite $K$-homology of $E_{\Fin}G$ with coefficients in $  A$. This is shown (independently and with completely different methods) in \cite{kranz} 
and   in
  \cite{bel-paschke}. 

In \cite{skandalis_tu_yu} it is shown that the   Baum--Connes assembly map  $\mu^{BC}_{A,*}$   is  {split-}injective provided the countable group $G$ with its canonical coarse structure $G_{can}$ admits a coarse embedding into a Hilbert space.  In the following we explain why this result is more general than what  could possibly be shown by specializing the results from \cite{desc} to $M$. To this end we note that 
the main  assumption on $G$ going into the split-injectivity results in \cite{desc} is 
 finite decomposition complexity (FDC) for  $G_{can}$. 
 In   \cite{gty} it is shown that this   FDC  condition implies that  $G_{can}$ 
admits  a coarse embedding into a Hilbert space. Hence
all  split-injectivity results  obtained by specializing \cite{desc} to equivariant topolological $K$-homology are already 
  covered by  \cite{skandalis_tu_yu}.

  But note that the method of the proof in   \cite{skandalis_tu_yu}   {is} specific to equivariant topological $K$-theory with coefficients in a $C^{*}$-algebra, while  the results from   \cite{desc} apply to a variety of other functors on the orbit category which are not related with topological $K$-theory at all.
\hB
\end{rem}

We now describe the structure of the present paper in some detail. 
The input data for our construction is  a $C^{*}$-category with $G$-action $\bC$.  In Section~\ref{rgijotgweregwrgrgwrg}
we associate to every $G$-set $X$ a $C^{*}$-category  ${(\bM\bC)}^{G}(X)$ of $X$-controlled $G$-objects in the multiplier category  $\bM\bC$ of $\bC$. If $X$ has the structure of a $G$-bornological coarse space, then in Section \ref{fiughiufvfdcasdcscdscca} we use the additional structures on $X$ in order to define smallness conditions on the objects of ${(\bM\bC)}^{G}(X)$ and control conditions on the morphisms. As a result we obtain
versions of Roe categories $\bar \bC^{G,\mathrm{ctr}}_{?}(X)$. In Section
\ref{eriugheriuvfvfvfvfvsvfvfdvsfs} we show the basic properties
of the functors $X\mapsto \bar \bC^{G,\mathrm{ctr}}_{?}(X)$ which go into the verification of the axioms of a coarse homology theory later.
In Sections \ref{qergiuhquifewdqewdewdqwedqewd} and \ref{ewtiguhiufhriuewdwedqwdewd} we  define  our four versions of coarse homology theories listed in \eqref{qefefewfefqff} and show that they have the required properties.
In Section \ref{qwrgioeqrgefefvsfvfdvfds} we calculate the values of these functors on certain $G$-bornological spaces. These calculations are relevant for identifying the corresponding functors   $M$ on $G\Orb$ defined via \eqref{wefweqfqwfqwefqff}.
The Sections \ref{qeriughqifqwcqeqc} and \ref{3qrgiuhiurlferfvrefvfs} prepare the proof of Theorem \ref{wergiojerfqerwfqwefwefewfqef} in Section \ref{sec_CPfunctors} by showing that $\Kcat\bC\cX^{G}_{c}$ admit transfers (an additional structure) and is strongly additive  (an additional property).
The Section \ref{eoigjortwgwegergwegreg} is of independent interest since it relates equivariant coarse homology theories for different groups via transfer and induction. The results of {that} section will be used in \cite{bl}.

{\em Acknowledgement: U.B.\ was supported by the SFB 1085 (Higher Invariants) funded by the Deutsche Forschungsgemeinschaft (DFG).}

\section{Bornological coarse spaces and equivariant coarse homology theories} \label{qriogfhjqowerfweqfqewfewfqewf}

In this section we recall the category of $G$-bornological  {coarse} spaces $G\BC$ and the notion of an equivariant coarse homology 
theory. The reference  for the equivariant case is  \cite{equicoarse}, but see also \cite{buen} for the non-equivariant case.

In order to fix size issues we fix four Grothendieck universes. Their elements are called very small sets, small sets, large sets, and very large sets,  respectively.

Let $G$ be a very small group. We let $\Set $ denote the small category of very small sets. We further let
$$G\Set \coloneqq \Fun(BG,\Set)$$ be the  small category of very small sets with $G$-action and equivariant maps.

If $X$ is a set, then $\cP_{X}$ denotes the power set of $X$.  The elements of $\cP_{X\times X}$ can be composed or inverted in the sense of correspondences (multi-valued maps). For $B$ in $\cP_{X}$ and $U$ in $\cP_{X\times X}$ we let $U[B]$ in $\cP_{X}$ be the result of applying the correspondence $U$ to $B$.

If 
  $X$ is in $G\Set$, then  $G$ acts on $\cP_{X}$.    
\begin{ddd}\label{trbertheheht}
A $G$-coarse structure on $X$ is a subset $\cC_{X}$ of $\cP_{X\times X}$ satisfying the following conditions:
\begin{enumerate}
\item $\cC_{X}$ is $G$-invariant.
\item $\diag(X)\in \cC_{X}$.
\item \label{qerighioergergwgergwergwergwerg}$\cC$ is closed under forming subsets, finite unions, inverses   and compositions.
\item \label{igwoegwergergwrgrg}The sub-poset (w.r.t.\ the inclusion relation) of $G$-invariants $\cC_{X}^{G}$ is cofinal in $\cC_{X}$. \qedhere
\end{enumerate}
\end{ddd}
The elements of $\cC_X$ are called coarse entourages.

  A subset $A$ of $\cP_{X\times X}$ generates a smallest $G$-coarse structure $\cC\langle A\rangle$ on $X$ containing $A$.

We consider  two $G$-sets with $G$-coarse structures  $(X,\cC_{X})$ and $(X^{\prime},\cC_{X^{\prime}})$, and a morphism   $f\colon X\to X^{\prime}$  in $ G\Set$. 

\begin{ddd}
	The map $f$ is  controlled if $(f\times f)(\cC_{X})\subseteq \cC_{X^{\prime}}$.
\end{ddd}

\begin{ddd}\label{thiowhwfgwrgwergwreg}
A $G$-coarse space is a pair $(X,\cC_{X})$  of  a $G$-set $X$ with a $G$-coarse structure $\cC_{X}$. A morphism between $G$-coarse spaces is an equivariant controlled map.
\end{ddd}

We obtain the category $G\Coarse$  of $G$-coarse spaces and  equivariant, controlled maps. 
Usually we will shorten the notation and denote $G$-coarse spaces just by  the symbol $X$.
\begin{ex}\label{rwtekoierfrefegd}
Let $(X,d)$ be a metric space equipped with an isometric $G$-action.
The metric $G$-coarse structure $\cC_{d}$ on $X$ is generated by the family of entourages $(U_{r})_{r\in [0,\infty)}$, where $U_{r} \coloneqq \{(x,x')\in X\times X \:|\: d(x,x')\le r \}$.
\hB
\end{ex}
  
  \begin{rem}\label{egiojweogergregwefwerfwrefw}
 If one drops Condition~\ref{igwoegwergergwrgrg} in Definition \ref{trbertheheht}, then one gets the characterization of a coarse space with a $G$-action, i.e., of an object of $\Fun(BG,\Coarse)$.

For $\lambda$ in $\R\setminus \{0\}$ we can consider the group $\Z$ acting  by $(n,x)\mapsto \lambda^{n}x$  on  the coarse space $\R$ with the coarse structure induced from the standard metric. This is an example of a coarse space with a $G$-action by automorphisms. But if $|\lambda|\not=1$, then it is not a $G$-coarse space.
   
 In contrast, if we let $\Z$ act on $\R$ by translations $(n,x)\mapsto n+x$, then this action is isometric and we would get a $G$-coarse space.
\hB
\end{rem}

Let $X$ be in $G\Set$. 
\begin{ddd}\label{wthoiwhthwgreggwregwgr}
A $G$-bornology on $X$ is a subset $\cB_{X}$ of $\cP_{X}$ satisfying the following conditions: 
\begin{enumerate}
\item $\cB_{X}$ is $G$-invariant.
\item $\cB_{X}$ is closed under forming subsets and finite unions.
\item\label{qergoijiogrgqgreg} $\bigcup_{B\in \cB_{X}}B=X$.
\end{enumerate}
\end{ddd}
The elements of $\cB_{X}$ will be called the bounded subsets of $X$.

A subset $A$ of $\cP_{X}$ generates a smallest $G$-bornology $\cB\langle A\rangle$ containing $A$.

 We consider  two $G$-sets with $G$-bornologies   $(X,\cB_{X})$ and $(X^{\prime},\cB_{X^{\prime}} )$, and a map   $f\colon X\to X^{\prime}$  in  $G\Set$. 

\begin{ddd}\label{etgiohjroifgjqrofiqfewfqew}\mbox{}
\begin{enumerate}\item
  $f$ is  proper if $f^{-1}(\cB_{X'})\subseteq \cB_{X }$.
\item \label{rfoijoiffqefefeq} $f$ is bornological if $f(\cB_{X})\subseteq \cB_{X'}$.
 \end{enumerate}
\end{ddd}

\begin{ddd}\label{ebjoifdbsdbsfdbsbsdb}A $G$-bornological space is a pair $(X,\cB_{X})$ of a $G$-set $X$  and a $G$-bornological structure $\cB_{X}$. 
A morphism between  $G$-bornological spaces is an equivariant and proper map.
\end{ddd}
  
We obtain the category $G\Born$ of $G$-bornological spaces and  equivariant and proper maps.

Let $X$ be a $G$-set with a $G$-coarse structure $\cC_{X}$ and a $G$-bornology $\cB_{X}$. 
\begin{ddd}\label{rgejqieogjrgoij1o4trqq}
$\cC_{X}$ and $\cB_{X}$ are compatible if for every $B$ in $\cB_{X}$ and every $V$ in $\cC_{X}$ we have  $V[B]\in \cB_{X}$.
\end{ddd}

\begin{ddd}\label{gjwerogijwoergwergwergweg}
A $G$-bornological coarse space is a triple  of a $G$-set $X$ with a $G$-coarse structure $\cC_{X}$ and a $G$-bornology $\cB_{X}$ such that $\cC_{X}$ and $\cB_{X}$ are compatible.
A morphism between $G$-bornological coarse spaces is a morphism  of $G$-sets which is proper and controlled.
\end{ddd}
In this way we get a category $G\BC$   of $G$-bornological coarse spaces and morphisms.
We usually denote a $G$-bornological coarse space 
$(X,\cC_{X},\cB_{X})$ just by  the symbol~$X$.

\begin{ex}\label{ergiowergergregfwrefwref}
If $(X,d)$ is a metric space with an isometric action of $G$, then we get a $G$-bornological coarse space $X_{d}$. Its $G$-coarse structure $\cC_{d}$ is  described in Example \ref{rwtekoierfrefegd}. Its bornology $\cB_{d}$ is the smallest one which is compatible
with $\cC_{X}$. Explicitly, $\cB_{d}$ is the set of all metrically bounded subsets of $X$.  
\hB
\end{ex}

\begin{ex}\label{qrgioqjrgoqrqfewfeqfqewfe} 
 For   $S$ in $G\Set$ we can consider the objects $S_{max,max}$, $S_{min,max}$ and $S_{min,min}$ in $G\BC$, where
the first index $min$ or $max$ refers to the minimal or maximal $G$-coarse structure (subsets of $\diag(S)$ or all of $\cP_{S\times S}$),
and the second $min$ or $max$ refers to the minimal  or maximal bornology (all finite subets or all of $\cP_{S}$) on $S$. 

If $S$ is infinite, then $S_{max,min}$ does not exist since in this case the coarse structure and the bornology are not compatible. \hB
 \end{ex}

\begin{ex}\label{etwgokergpoergegregegwergrg}
The group $G$ has a canonical $G$-coarse structure $\cC_{can}$  generated by the entourages $\{(g,g')\}$ for  all pairs of elements $g,g'$ in $G$. We 
let $G_{can,min}$  be the object of $G\BC$ given by the $G$-set $G$ (with the left action) with the coarse structure $\cC_{can}$ and the minimal bornology $\cB_{min}$ which consists of the finite subsets.
\hB
\end{ex}

\begin{ex}\label{wetgwetgrefrefwfrefw}
If $f\colon Z\to X$ is a map in $G\Set$ and $X$ is in $G\BC$, then $Z$ has an induced $G$-bornological  coarse structure. 
The coarse structure $\cC_{Z}$ is the maximal one such that $f$ is controlled, and the bornology $\cB_{Z}$ is the minimal one such that $f$ is proper. These structures are in fact  compatible. If $Z$ is equipped with these structures, then  $f$ becomes a morphism in $G\BC$.
\hB
\end{ex}

The category $G\BC$ has a symmetric monoidal structure $\otimes$:
\begin{ddd}[{\cite[Ex.\ 2.17]{equicoarse}}]\label{defn_symmetric_monoidal_GBC}\mbox{}
\begin{enumerate}\item 
For $X$,$X^{\prime}$ in $G\BC$ the underlying $G$-set of $X\otimes X^{\prime}$ is $X\times X^{\prime}$ with the diagonal action. Its $G$-coarse structure is generated by the entourages $U\times U^{\prime}$ for  all $U$ in $\cC_{X}$ and $U^{\prime}$ in $\cC_{X^{\prime}}$, and its $G$-bornology is generated by the sets $B\times B^{\prime}$ for all $B$ in $\cB_{X}$ and $B^{\prime}$ in $\cB^{\prime}_{X}$.  
\item The tensor product of two morphisms $f,f'$ is given by the map $f\times f$ between the cartesian products of the underlying $G$-sets.
\item The tensor unit is the one-point space $*$ with the unique bonological coarse structure.
\item The associativity and symmetry constraints are imported from the cartesian structure on {the} category $G\Set$.
\end{enumerate}
\end{ddd}


Let $f_{0},f_{1}\colon X\to X^{\prime}$ be a pair of morphisms in $G\BC$.

\begin{ddd}\label{wegwerfreweggerg}
$f_{0}$ and $f_{1}$ are close to each other if $(f_{0}\times f_{1})(\diag(X))\in \cC_{X'}$.
\end{ddd}

\begin{ex}
We consider $\R$ with the metric coarse structure.

The morphisms $\id_{\R}$ {and} $x\mapsto -x$ are not close to each other.

The morphisms $\id_{\R}$  {and} $x\mapsto x+1$ are close to each other.
\hB
\end{ex}

Let  $f\colon X\to X'$  be a morphism  in $G\BC$.
 
\begin{ddd}\label{wetgiojwergergergw} \mbox{}
\begin{enumerate}
\item $f$ is a coarse equivalence if there exists a morphism $g\colon X'\to X$ such that $f\circ g$ is close to $\id_{X'}$ and $g\circ f$ is close to $\id_{X}$.
\item\label{wetgiojwergergergw1}
 $f$ is a weak coarse equivalence
it it becomes a coarse equivalence after forgetting the $G$-action.
\end{enumerate}
\end{ddd}
 
Note that a coarse equivalence is a weak coarse equivalence, but the converse is {in general} not true:
 
 \begin{ex}
 We consider the group $\Z/2\Z$. We further consider   the metric space $\R$ as a $\Z/2\Z$-bornological coarse space such that the    non-trivial element of $\Z/2\Z$ acts as multiplication by $-1$. It preserves the subspace $\R\setminus \{0\}$.
 The inclusion $\R\setminus \{0\}\to\R$ is a morphism in $(\Z/2\Z)\BC$ which is a weak coarse equivalence, but not a coarse equivalence. Indeed, there does not exist any equivariant map $\R\to \R\setminus \{0\}$, because the image of $0$ under such a map had to be a $\Z/2\Z$-fixed point in the target which does not exist.
\hB
\end{ex}

Let $X$ be in $G\BC$.

\begin{ddd}\label{qriugoqergreqwfqewfqef}
$X$ is called flasque if it admits an endomorphism $f$ such that:
\begin{enumerate}
\item\label{twgiojweorgrgrgwergwegergwerg} $\id_{X}$ and $f$ are close to each other.
\item \label{qrgioqrgfregqfqefqwefqef1} For every $U$ in $\cC_{X}$ we have $\bigcup_{n\in \nat} (f^{n}\times f^{n})(U)\in \cC_{X}$.
\item\label{qrgioqrgfregqfqefqwefqef} For every $B$ in $\cB_{X}$ there exists $n$ in $\nat$ such that $f^{n}(X)\cap B=\emptyset$.
\end{enumerate}
\end{ddd}

We say that $f$ implements the flasqueness of $X$.

\begin{ex}
We consider $\nat$ with the bornological coarse structure induced by the usual metric. 
Then $\nat \otimes X$ is flasque for 
  every $G$-bornological coarse space $X$. The selfmap $(n,x)\mapsto (n+1,x)$ implements flasqueness. \hB
\end{ex}

If $X$ is in $G\BC$ and $U$ is in $\cC_{X}^{G}$, then we let $X_{U}$ denote the object $(X,\cC\langle \{U\}\rangle ,\cB_{X})$ of $G\BC$, where $\cC\langle\{U\}\rangle$ is the $G$-coarse structure generated by $U$. The identity of the underlying set of $X$ induces a  morphism $X_{U}\to  X$ in $G\BC$.

\begin{ex}
If $(X,d)$ is a path metric space and we consider
\[U_{r}\coloneqq \{(x,x')\in X\times X\:|\: d(x,x')\le r\}\]
for some $r$ in $(0,\infty)$, then $X_{U_{r}}\to X_{d}$ is an isomorphism (see Example \ref{ergiowergergregfwrefwref}).

If we consider the subset $X \coloneqq \{n^{2}\:|\: n\in \nat\}$ of $\R$ with the induced bornological coarse structure (Example \ref{wetgwetgrefrefwfrefw}), then the map $X_{U}\to X$ for $U$ in $\cC_{X}$ is never  an isomorphism.
\hB
\end{ex}

Let $\cY\coloneqq (Y_{i})_{i\in I}$ be a family of subsets of $X$ indexed by a very small filtered poset.

\begin{ddd}\label{gqrfqwerfqfwefwefqwf}
$\cY$ is an invariant big family if:
\begin{enumerate} 
\item  For all $i,i'$ in $I$ with $i\le i'$ we have $Y_{i}\subseteq Y_{i'}$.
\item $Y_{i}$ is $G$-invariant for every $i$ in $I$.
\item\label{weoigjwoegergergergwerg} For every $i$ in $I$ and $U$ in $\cC_{X}$ there exists  $i'$ in $I$ such that  $U[Y_{i}]\subseteq Y_{i'}$.  \end{enumerate}
\end{ddd}

\begin{ex} In the case of the trivial group
the bornology $\cB_{X}$  considered as a self-indexing family is an example of a big family on $X$ by the compatibility of $\cB$ with  the coarse structure.

In general, if $Y$ is a $G$-invariant subset of $X$, then the family $\{Y\}:=(U[Y])_{U\in \cC_{X}^{G}}$ is an invariant big family. It is called the invariant big family generated by $Y$.
\hB
\end{ex}

\begin{ddd}\label{eqrgklefqwewefqeff}
An  invariant complementary pair $(\cY,Z)$ on $X$ is a pair of an invariant big family $\cY$ and an invariant subset $Z$ of $X$ such that there exists $i$ in $I$ with $Y_{i}\cup Z=X$.
\end{ddd}

If not said differently we consider subsets of $X$ with the induced bornological coarse structures (Example \ref{wetgwetgrefrefwfrefw}).
We can form the big family  $\cY\cap Z\coloneqq (Y_{i}\cap Z)_{i\in I}$ on $Z$.

In the present paper we use the language of $\infty$-categories\footnote{more precisely $(\infty,1)$-categories}. References are \cite{htt,Cisinski:2017}.  Ordinary categories will be considered as $\infty$-categories using the nerve functor. 
A typical target $\infty$-category for the homological functors introduced below is the stable $\infty$-category $\Sp$ of spectra.  We refer to \cite{HA} for an introduction to stable $\infty$-categories in general, and for $\Sp$ in particular.  
The $\infty$-categories considered in the present paper belong to the large universe. 

Let $$E\colon G\BC\to \bM$$ be a functor with target some $\infty$-category.   For an invariant big family $\cY=(Y_{i})_{i\in I}$ on some $G$ bornological coarse space $X$ we set 
$$E(\cY)\coloneqq \colim_{i\in I} E(Y_{i})$$
provided this colimit exists.
 
\begin{ddd}\label{ergiweorgregergwerg}
$E$ is coarsely invariant if  for any pair of close morphism $f_{0},f_{1}\colon X\to X'$ the induced morphisms $E(f_{0})$ and $E(f_{1})$ are equivalent.
\end{ddd}

One can show that $E$ is coarsely invariant if and only if it sends coarse equivalences to equivalences.

  \begin{ddd} \label{wetgiwoergregwrgregwg}
 $E$ is excisive, if the following conditions are satisfied:
 \begin{enumerate}
 \item 
  $E(\emptyset)$ is an initial object in $\bM$.
  \item For  every $X$ in $G\BC$ and invariant complementary pair $(\cY,Z)$  on $X$:
  \begin{enumerate}
  \item $E(\cY)$ and $E(\cY\cap Z)$ exist.
  \item The square
$$\xymatrix{E(\cY\cap Z)\ar[r]\ar[d]&E(Z)\ar[d]\\E(\cY)\ar[r]&E(X)}$$
is a push-out square in $\bM$. 
 \end{enumerate} \end{enumerate}
  \end{ddd}

\begin{ddd}\label{wetklhgwoelgregweg9}
$E$ is $u$-continuous if for every $X$ in $G\BC$ the following conditions are satisfied:
\begin{enumerate}
\item   The  colimit $\colim_{U\in \cC_{X}^{G}} E(X_{U})$  exists.
\item  The  canonical  {morphism}
$\colim_{U\in \cC_{X}^{G}} E(X_{U})\to E(X)$ is an equivalence. \end{enumerate}
\end{ddd}

\begin{ddd}
$E$ vanishes on flasques if for every   flasque object  $X$  in $G\BC$ the object  $E(X) $ is initial in $\bM$.
\end{ddd}

Let $X$ be in $G\BC$ and $L$ be a subset of $X$.
\begin{ddd}\label{rgoiqrgrfqwfewfq} We say that $L$ is locally finite if $|L\cap B|<\infty$ for all $B$ in $\cB_{X}$.
\end{ddd}
We let $\LF(X)$ denote the poset of locally finite, $G$-invariant subsets of $X$. If $f\colon X\to Y$ is a morphism in $G\BC$, then it induces a morphism of posets $\LF(X)\to \LF(Y)$ given by $L\mapsto f(L)$.

 \begin{ddd}\label{weifhqewiefjeefqefwefqwef}
 $E$ is continuous if for every $X$ in $G\BC$  the following conditions are satisfied:
 \begin{enumerate}
 \item The colimit  $\colim_{L\in \LF(X)} E(L)$ exists.
 \item The canonical morphism $\colim_{L\in \LF(X)} E(L)\to E(X)$ is an equivalence.
 \end{enumerate}
  \end{ddd}

Let $X$ be in $G\BC$, and let $Y,Z$ be  two  subsets. 
\begin{ddd}
$Y$ and $Z$ are coarsely disjoint if $U[Y]\cap U[Z]=\emptyset$ for every $U$ in $\cC_{X}$.
\end{ddd}

We assume that $\bM$  is pointed and let $0_{\bM}$ denote the zero-object.

\begin{ddd}\label{qerogfjqeropfewfqwffe}
$E$ is called $\pi_{0}$-excisive if for every $X$ in $G\BC$ and invariant coarsely disjoint partition $(Y,Z)$ of $X$ into $G$-invariant subsets the square
\begin{equation}\label{eq_pushout_pizero_excisive}
\xymatrix{0_{\bM}\ar[r]\ar[d]&E(Z)\ar[d]\\E(Y)\ar[r]&E(X)}
\end{equation}
is a push-out square.
\end{ddd}
 
\begin{ex} An excisive functor is $\pi_{0}$-excisive. In fact, if $(Y,Z)$ is any invariant coarsely disjoint partition of $X$ into $G$-invariant subsets, then $(Z,(Y))$ is an equivariant complementary pair, where $(Y)$ is the single member family with member $Y$.
Moreover, $Z\cap (Y)=(\emptyset)$.  In view of Definition \ref{wetgiwoergregwrgregwg} the assertion is now obvious.
\hB
 \end{ex}

We consider  an invariant coarsely disjoint partition $(Y,Z)$ of $X$ into $G$-invariant subsets.
The pair of maps $\id_{E(Y)}\colon E(Y)\to E(Y)$ and $0\colon E(Z)\to 0_{\bM}\to E(Y)$ induce, using the presentation of $E(X)$ as a push-out  \eqref{eq_pushout_pizero_excisive}, a projection $p_{Y}\colon E(X)\to E(Y)$.

Let $(X_{i})_{i\in I}$ be a family in $G\BC$ indexed by a very small set $I$.
\begin{ddd}[{\cite[Ex.~2.16]{equicoarse}}]\label{ergoiejoqfqefweqfqwef}
The free union $\bigsqcup^{\free}_{i\in I}X_{i}$ is the following object of $G\BC$:
\begin{enumerate}
\item The underlying $G$-set is the disjoint union $\bigsqcup_{i\in I} X_{i}$ of $G$-sets.
\item The $G$-coarse structure is generated by the entourages $\bigcup_{i\in I} U_{i}$ for all families $(U_{i})_{i\in I}$ in $\prod_{i\in I} \cC_{X_{i}}$.
\item The $G$-bornology is generated by $\bigcup_{i\in I} \cB_{X_{i}}$.
\end{enumerate}
\end{ddd}

For every $j$ in $I$ we have an invariant  and coarsely disjoint decomposition 
$(X_{j},\bigsqcup^{\free}_{i\in I\setminus \{j\}}X_{i})$ of $\bigsqcup^{\free}_{i\in I}X_{i}$.
If $E$ is $\pi_{0}$-excisive, then we have a projection 
$$p_{j}\colon E\big(\bigsqcup^{\free}_{i\in I}X_{i}\big) \to E(X_{j})\, .$$

\begin{ddd}[{\cite[Def. 3.12]{equicoarse}}]\label{ergoijerwogergerfwfref9}
$E$ is called strongly additive, if:
\begin{enumerate}
\item $\bM$ is pointed.
\item $E$ is $\pi_{0}$-excisive.
\item For every  family $(X_{i})_{i\in I}$  in $G\BC$  indexed by a very small set $I$ we have:
\begin{enumerate}
\item The product  $\prod_{j\in I} E(X_{j})$ exists.
\item  The canonical morphism
 $(p_{j})_{j\in I}\colon E(\bigsqcup^{\free}_{i\in I}X_{i})\to \prod_{j\in I} E(X_{j})$  is an equivalence.
\end{enumerate}
\end{enumerate}
%
\end{ddd}
 
We consider a functor $E\colon G\BC\to \bM$ where $\bM$ is a stable $\infty$-category.

\begin{ddd}\label{wefuihqfwefwffqwefefwq}
$E$ is an equivariant coarse homology theory if it has the following properties:
\begin{enumerate}
\item $E$ coarsely invariant 
\item $E$ is excisive. 
\item $E$ is $u$-continuous. 
\item $E$ vanishes on flasques.
\end{enumerate}
\end{ddd}

Note that the target category of an equivariant coarse homology theory is always assumed to be stable. 
Usually it will also be cocomplete. In this case   the conditions of existence of various filtered colimits   in the definitions above 
{are} automatic.

The properties of continuity and strong additivity are considered as additional properties which
a coarse homology may or may not have.  

 
\begin{ex}\label{regiuehrifwefqfewffqewedqe}
If $Y$ is in $G\BC$ and $E\colon G\BC\to \bM$ is an equivariant coarse homology theory, then we can define a new equivariant coarse homology theory
$$E_{Y}\colon G\BC\to \bM\, , \quad X\mapsto E(Y\otimes X)$$
called the twist of $E$ by $Y$, see \cite[Sec.~4.3]{equicoarse}. Here $\otimes$ is the symmetric monoidal structure on $G\BC$ introduced in Definition~\ref{defn_symmetric_monoidal_GBC}.
\hB
\end{ex}

  Recall the Definition  \ref{qriugoqergreqwfqewfqef} flasquenes. In order to obtain the notion of weak flasqueness
 we weaken the Condition~\ref{qriugoqergreqwfqewfqef}.\ref{twgiojweorgrgrgwergwegergwerg}. Let $X$ be in $G\BC$. 
\begin{ddd}[{\cite[Def.~4.18]{equicoarse}}]\label{qriugoqergreqwfqewfqefr324r34r3r34r34r34r}
$X$ is called weakly flasque if it admits an endomorphism $f$ such that:
\begin{enumerate}
\item\label{toijgworegewrgwer} For every {equivariant} coarse homology theory $E$ we have  $E(\id_{X})\simeq E(f)$.
\item \label{qrgioqrgfregqfqefqwefqef1w} For every $U$ in $\cC_{X}$ we have $\bigcup_{n\in \nat} (f^{n}\times f^{n})(U)\in \cC_{X}$.
\item\label{qrgioqrgfregqfqefqwefqefw} For every $B$ in $\cB_{X}$ there exists $n$ in $\nat$ such that $f^{n}(X)\cap B=\emptyset$.
\end{enumerate}
\end{ddd}
 
 We say that $f$ implements weak flasqueness of $X$.
Note that in \cite[Def.~4.18]{equicoarse} we used the universal equivariant coarse homology theory $\Yo^{s}$ to give an equivalent formulation of Condition~\ref{qriugoqergreqwfqewfqefr324r34r3r34r34r34r}.\ref{toijgworegewrgwer}.  
 
Since an {equivariant} coarse homology theory is coarsely invariant it is clear that a flasque $G$-bornological coarse space is weakly flasque.

Let $E\colon G\BC\to \bM$ be an equivariant coarse homology theory. 
 
\begin{ddd}\label{wqroijwoidfewdewqdqwdqew}
$E$ is called strong if it annihilates  weakly flasque $G$-bornological coarse spaces.
\end{ddd}

Strongness of a coarse homology theory plays an important role in the construction and study of the coarse assembly map \cite{ass}
and its equivariant generalization \cite[Sec.~11.3]{equicoarse}. 
All examples of coarse homology theories constructed in the present paper are strong.

   \section{\texorpdfstring{$C^{*}$}{Cstar}-categories and homological functors} \label{qergijfioqrfqwedwedqwdq}

  In this section we recall the basic definitions and facts on $C^{*}$-categories which will be used in the present paper.
  For details and further references we refer to  \cite{cank} and \cite{crosscat}.
   We start with a description of the category $\nCcat$ of possibly non-unital $C^{*}$-categories.
 A $\C$-linear $*$-category  is a possibly non-unital  (we do not require the existence of identity endomorphisms)  category $\bC$ which is enriched in complex vector spaces and has an antilinear involution $*\colon \bC^{\op}\to \bC$  fixing objects. Note that a $C^{*}$-algebra $A$ is an example of a $\C^{*}$-linear $C^{*}$-category with a single object. A morphism between $\C$-linear $*$-categories is a not necessarily unit-preserving functor which is compatible with the enrichment and the involutions.  We let $\nClincat$ denote the large category of small $\C$-linear $*$-categories and morphisms. By $\Clincat$ we denote its subcategory of unital $\C$-linear $*$-categories  (where all objects admit identity morphism) and unit-preserving functors.
  
 Given a morphism $f$ in a $\C$-linear $*$-category $\bC$ we can define its maximal seminorm
 $\|f\|_{\max}$ in $[0,\infty]$ as the supremum of the norms $\|\rho(f)\|$ for all morphisms $\rho \colon \bC\to A$ from $\bC$ to   $C^{*}$-algebras $A$.   A $\C$-linear $*$-category is called a pre-$C^{*}$-category if all  its  morphisms have finite maximal seminorms. 
 
 \begin{ddd}A $C^{*}$-category is a  pre-$C^{*}$-category with the property that its $\Hom$-spaces are Banach spaces
 with respect to the maximal norm. A morphism between $C^{*}$-categories is a morphism of $\C$-linear $*$-categories.
 \end{ddd}
 By definition, $\nCcat$ is a full subcategory of $\nClincat$.
 In order to show that a $\C$-linear $*$-category $\bC$ is a $C^{*}$-category it suffices
 to provide a submultiplicative Banach space norm on the morphism spaces which in addition satisfies the $C^{*}$-identity and has the property that
 $f^{*}\circ f$ is a non-negative element in the corresponding endomorphism $C^{*}$-algebra for any morphism $f$ in $\bC$.
 We say that this norm exhibits $\bC$  as a $C^{*}$-category. 
 When we talk about morphisms in $\nCcat$ we will use the words morphism or functor synonymously.
  In this case it coincides with the maximal seminorm.
A functor between    $C^{*}$-categories automatically induces contractions on the morphism spaces. 
 
 \begin{ddd} A functor between $C^{*}$-categories is said to be   faithful (fully faithful) if it induces injections (bijections)
 on morphism spaces.
 \end{ddd}

Note that a  faithful functor induces isometries on the morphism spaces. 
 
We have inclusions $\nCalg\subseteq \nCcat$ and $\Calg\subseteq \Ccat$ 
    of the categories of $C^{*}$-algebras and unital $C^{*}$-algebras into the corresponding categories of $C^{*}$-categories  which interpret $C^{*}$-algebras as single object $C^{*}$-categories.
      The following generalizes the notion of an exact sequence of $C^{*}$-algebras and of an ideal  to $C^{*}$-categories.
 \begin{ddd}A sequence of functors $\bC\to \bD\to \bE$ between $C^{*}$-categories  will be called exact 
 if the functors are bijections on the level of objects and induce short exact sequences on the level of morphism spaces.
  \end{ddd} 
  
  We will usually indicate exactness by writing $0\to \bC\to \bD\to \bE\to 0$. 
 
 \begin{ddd}\label{rwgiojworgergrwegrefr}
 A functor $\bC\to \bD$ between $C^{*}$-categories is an ideal inclusion if it faithful,    a bijection on the level of objects,   and if the composition of a morphism from $\bD$ with a morphism from $\bC$ always belongs to $\bC$.
 \end{ddd}
  
 In an exact sequence as above the inclusion $\bC\to \bD$ is an ideal inclusion, and every ideal inclusion
 can be extended to an exact sequence by adding the quotient $C^{*}$-category as the third term.

For a group $G$ we let $\Fun(BG,\nCcat)$ denote the category of $C^{*}$-categories with a strict $G$-action. 
   There are a maximal and  a reduced crossed product functor
$$-\times G, -\rtimes_{r}G \colon \Fun(BG,\nCcat)\to \nCcat$$ 
described in   \cite[Sec.~5]{crosscat} and \cite[Sec.~12]{cank}, respectively.
   
   \begin{ex}\label{dqedqwdqwd1234frgfrfwerger} We consider a very  small $C^{*}$-algebra with $G$-action
  $A$ in $\Fun(BG,\nCalg)$. We write the action of $g$ in $G$ on $A$ by $a\mapsto {}^{g}a$.
  
  Following  \cite[Ex. 2.10]{cank} we  have   the $C^{*}$-category with $G$-action
 $\Hilb(A)$ of very small $A$-Hilbert $C^{*}$-modules in $\Fun(BG,\nCcat)$.
 The objects of $\Hilb(A)$ are the very small $A$-Hilbert $C^{*}$-modules, and the morphisms are the adjointable bounded operators between Hilbert $C^{*}$-modules. If $g$ is an element of $G$, then the functor
$g \colon \Hilb(A)\to \Hilb(A)$ is defined as follows:
\begin{enumerate}
\item objects: We consider  an $A$-Hilbert $C^{*}$-module  $M$ with right multiplication $\cdot$ and $A$-valued scalar product $\langle-,-\rangle_{M}$.  The  underlying complex vector space of $gM$ is that of $M$. 
The right $A$-action $\cdot_{g}$ on $gM$ is given by $m\cdot_{g} a \coloneqq m\cdot {}^{g^{-1}}a$, and the $A$-valued scalar product
is $\langle m,m'\rangle_{gM} \coloneqq {}^{g}\langle m,m' \rangle_{M}$.
 \item For a morphism $f \colon M\to M'$ the morphism $gf \colon gM\to gM'$ is given by the same linear map $f$.
\end{enumerate}
The  $C^{*}$-category $\Hilb(A)$ is unital, but it has  an invariant   non-unital subcategory $\Hilb_{c}(A)$
with the same objects but  the  compact operators (in the sense of $A$-Hilbert $C^{*}$-modules) as morphisms.
The inclusion $\Hilb_{c}(A)\to \Hilb(A)$ is an ideal inclusion.
\hB
\end{ex}

We can take the $C^{*}$-category $\Hilb_{c}(A)$ described in Example \ref{dqedqwdqwd1234frgfrfwerger} as the coefficient category for the constructions in the subsequent sections which then produce a variety of new examples
of $C^{*}$-categories with and without $G$-action.

For the following material we refer to \cite[Sec.~2 \& Sec.~3]{cank}.
The notion of a $W^{*}$-category generalizes that of a von Neumann algebra.
\begin{ddd} A $W^{*}$-category is a unital $C^{*}$-category with the property that all its morphism Banach spaces admit pre-duals.
\end{ddd}

By \cite[Def.~2.35]{cank}, to any $C^{*}$-category $\bC$ one can functorially associate its $W^{*}$-envelope $\bW^{\mathrm{nu}}\bC$. The superscript indicates that it is the non-unital version of  the functor $\bW$ from unital $C^{*}$-categories to   $W^{*}$-categories. The latter  is characterized as    the left-adjoint to the inclusion from unital $W^{*}$-categories (and functors which are in addition $\sigma$-weakly continuous) to unital $C^{*}$-categories.
For a non-unital $C^{*}$-category $\bC$ we then get the $W^{*}$-catergory $\bW^{\mathrm{nu}}\bC$ (it is unital) as the $\sigma$-weak closure of $\bC$ under
$\bC\to \bC^{u}\to \bW\bC^{u}$, where $\bC\to \bC^{u}$ denotes the unitalization.
 We  have an isometric and $\sigma$-weakly dense inclusion
$\bC\to \bW^{\mathrm{nu}}\bC$, and if $f \colon \bC\to \bD$ is a morphism of $C^{*}$-categories, then the functor $\bW^{\mathrm{nu}}(f) \colon \bW^{\mathrm{nu}}\bC\to \bW^{\mathrm{nu}}\bD$ is in addition 
$\sigma$-weakly continuous. 

In order to give a quick description of the multiplier category of a $C^{*}$-category we turn   \cite[Thm.~3.15]{cank} into a definition.

\begin{ddd} The multiplier category $\bM\bC$ is defined as the wide subcategory of $\bW^{\mathrm{nu}}\bC$ which idealizes $\bC$.
\end{ddd}
 The multiplier category $\bM\bC$  thus consists of those morphisms of $\bW^{\mathrm{nu}}\bC$ which have the property that all compositions with morphisms from $\bC$ again belong to $\bC$. Note that $\bM\bC$ is unital and contains $\bC$ as an ideal.
The universal  property of  the inclusion $\bC\to\bM\bC$  says   that it is a  final object in the category of  ideal inclusions under $\bC$.

\begin{ddd} A morphism $f \colon \bC\to \bD$ is non-degenerate if for any two objects $C,C'$ of $\bC$
the linear subspaces  $\phi(\End_{\bC}(C'))\Hom_{\bD}(\phi(C),\phi(C'))$ and
$ \Hom_{\bD}(\phi(C),\phi(C'))\phi(\End_{\bC}(C))$ are dense in $  \Hom_{\bD}(\phi(C),\phi(C'))$.\end{ddd}
If $f \colon \bC\to \bD$ is non-degenerate, then by \cite[Prop.~3.16]{cank}
the morphism $\bW^{\mathrm{nu}}(f) \colon \bW^{\mathrm{nu}}\bC\to \bW^{\mathrm{nu}}\bD$ restricts to a morphism $\bM(f) \colon \bM\bC\to \bM\bD$.

\begin{ex}
In the situation of the Example  \ref{dqedqwdqwd1234frgfrfwerger} (forget the $G$-action) we have an  isomorphism
$\bM\Hilb_{c}(A)\cong \Hilb(A)$. 
If $A\to B$ is a homomorphism, then the functor $-\otimes_{A}B \colon \Hilb_{c}(A)\to \Hilb_{c}(B)$ is a non-degenerate morphism.
Its extension to the multiplier categories corresponds  under the isomorphism  above to the same functor $-\otimes_{A}B \colon \Hilb(A)\to \Hilb(B)$. \hB
\end{ex}

\begin{ddd}An endomorphism $p $ in a $C^{*}$-category is called a projection if $p=p^{*}=p^{2}$. \end{ddd}
\begin{ddd}A morphism $u \colon C\to C'$   is called a unitary (an isometry, or a partial isometry), if
$u^{*}\circ u=\id_{C}$ and $u\circ u^{*}=\id_{C'}$ ($u^{*}\circ u=\id_{C}$, or $u\circ u^{*}$ and $u\circ u^{*}$ are projections, respectively).  \end{ddd}Note that objects supporting a unitary or an isometry are necessarily unital.

\begin{ddd}\label{wetijgowegreferfwrefwrefrf}A projection $p$ on $C$  is effective if there exists an isometry $u \colon C'\to C$ with $u\circ u^{*}=p$.
We then say that $(C',u)$ represents an image of $p$.  
\end{ddd}

\begin{ddd}
We say that a unital $C^{*}$-category is idempotent complete if every projection  is effective.
\end{ddd}

Consider a  finite collection  $(C_{i})_{i\in I}$  of objects in a $C^{*}$-category. 

\begin{ddd}An orthogonal sum of this family is a pair $(C,(e_{i})_{i\in I})$ of an object $C$ and a family of isometries
$e_{i} \colon C_{i}\to C$ such that 
\begin{equation}\label{ewfoiwjfoaewfewfeadewd} \sum_{i\in I} e_{i} \circ e_{i}^{*}=\id_{C}\, . 
\end{equation}
\end{ddd}  Again note that in this case the  objects
$C$ and  $C_{i}$ for all $i$ in $I$ are necessarily unital.

\begin{ddd}\label{ewrgiojwgerwfweferfw} We say that $C^{*}$-category is additive if it admits orthogonal sums for all finite families of objects. 
\end{ddd}
An additive $C^{*}$-category is unital.

 The category $\Ccat$ of unital $C^{*}$-categories and unit-preserving functors has a natural extension to  a $(2,1)$-category $\Ccat_{2,1}$ whose   $2$-isomorphisms are natural transformations  (called unitary isomorphisms) implemented by unitaries.

To any unital $C^{*}$-category $\bC$ we can functorially associate its
additive and its additive and idempotent completions
$$\bC\to \bC_{\oplus}\to\Idem( \bC_{\oplus})\, ,$$ see \cite[Sec.~16]{cank}. 
These constructions are left-adjoints of two-categorical adjunctions whose right-adjoints are the respective inclusion functors.

\begin{ddd}\label{weroigjowregrefwerferwfw}
A unital morphism $\bC\to \bD$ between unital $C^{*}$-categories is called a Morita equivalence if it is fully faithful and 
the induced morphism $\Idem(\bC_{\oplus})\to \Idem(\bD_{\oplus})$ is an equivalence.
\end{ddd}

In order to check whether a fully faithful morphism $\bC\to \bD$ is a Morita equivalence it suffices to
show that every object of $\bD$ is a subobject of a finite sum of objects of $\bC$.

\begin{ex}The $C^{*}$-categories $\Hilb(A)$ and the full subcategories $\Hilb^{\fg, \free}(A)$ and  $\Hilb(A)^{\fg,\mathrm{proj}}$
of finitely generated free or finitely generated projective modules, respectively, are additive. In general,
$\Hilb_{c}(A)$ is not additive.

The categories  $\Hilb(A)$ and the full subcategory $\Hilb(A)^{\fg,\mathrm{proj}}$ are 
idempotent complete, but $\Hilb^{\fg, \free}(A)$ in general is not. The inclusion
$\Hilb^{\fg, \free}(A)\to \Hilb^{\fg, \mathrm{proj}}(A)$ presents the target as an idempotent completion. \hB
\end{ex}

In order to extend the definition of an orthogonal sum to infinite families 
we must specify in which sense the sum in \eqref{ewfoiwjfoaewfewfeadewd} converges.
Requiring convergence in norm would exclude all non-trivial examples of infinite sums. 
In the present paper we will work with  the notion of AV-sums (named after \cite{antoun_voigt} where this definition has been introduced).  We will use the  strict topology on the morphism spaces of $\bM\bC$  which is the topology of pointwise convergence
of compositions with morphisms in $\bC$.

Let $\bC$ be a $C^{*}$-category and
consider a small family  
$(C_{i})_{i\in I}$  of objects in $\bC$. 

\begin{ddd}\label{qerghqiferwfqefedeq}An orthogonal AV-sum 
of this family is a pair 
 $(C,(e_{i})_{i\in I})$ of an object $C$ and a family of isometries
$e_{i} \colon C_{i}\to C$ in $\bM\bC$ such that the sum in \eqref{ewfoiwjfoaewfewfeadewd} converges
in the strict topology. 
\end{ddd}
We refer to \cite[Sec.~7]{cank} for a description of the space of  morphisms from and into AV-sums.
If $\phi \colon \bC\to \bD$ is a non-degenerate morphism, then it preserves AV-sums.

\begin{ex} AV-sums in $\Hilb_{c}(A)$ are represented by orthogonal sums of $A$-Hilbert $C^{*}$-modules in the usual sense, see \cite[Sec.~8]{cank}. \hB
\end{ex}

Let  $\bC$ be  a $C^{*}$-category.
 \begin{ddd}\label{wiotgwtgwregewf} We  call $\bC$  effectively additive if for every object $C$ of $\bC$ and very small mutually orthogonal family of effective projections $(p_{i})_{i\in I}$ on $C$  in $\bM\bC$ such that $\sum_{i\in I}p_{i}$ converges strictly  to a projection $p$ in $\bM\bC$ also   $p$   is   effective  in $\bM\bC$. 
\end{ddd}

Thus $\bC$ is effectively additive if it admits  orthogonal AV-sums of orthogonal families of subobjects of objects. 
If $\bM\bC$ is idempotent complete, then it is clearly effectively additive.
If $\bC$ in $\nCcat$ admits all very small orthogonal AV-sums, then it is effectively additive
by \cite[Lem.~7.9]{cank}.

 We introduce the following notation:
  \begin{ddd}\label{weiotjgoegergerfwrefer}
  \mbox{}
  \begin{enumerate}
  \item
 $\ndCcat$ is the wide  subcategory of $\nCcat$ of  non-degenerate morphisms. \item   $\ndeCcat$ is the full subcategory of $\ndCcat$ of effectively additive objects.
\item   $\ndaCcat$  is  the full subcategory of $\ndeCcat$ of  objects 
admitting  countable  orthogonal AV-sums. 
\item   $\ndasCcat$  is  the full subcategory of $\ndaCcat$ of  objects 
admitting   all very small orthogonal AV-sums. 
\end{enumerate}
\end{ddd} 


 Let $\bC$ be an additive $C^{*}$-category.
 If $\phi,\psi \colon \bC'\to \bC$ are two functors, then we can define an orthogonal sum $\phi\oplus \psi$ by
 choosing sums of $(\phi(C'),\psi(C'))$ for any object $C'$ in $\bC'$. The sum of functors is well-defined up to unitary isomorphism.
 \begin{ddd}[{\cite[Defn.~11.3]{cank}}] \label{erjigoqgfedewdq} $\bC$ is called flasque if it admits an endofunctor $S \colon \bC\to \bC$
such that there is a unitary isomorphism $\id_{\bC}\oplus S\cong S$.
\end{ddd}
If $\bC$ admits countable AV-sums, then it is flasque by \cite[Ex.~11.5]{cank}.

We consider a stable $\infty$-category $\bM$ and a functor $\Homol \colon \nCcat\to \bM$. For the following notions see also \cite[Sec.~13]{cank}.
\begin{ddd}\label{wetgjwieoferferfefwre}\mbox{}
\begin{enumerate} \item 
The functor $\Homol$ is called a homological functor if it sends unitary equivalences to equivalences and 
exact sequences to fibre sequences. \item The functor $\Homol$ is called finitary, if  $\bM$ is cocomplete and $\Homol$  in addition preserves very small filtered colimits.  \item The functor $\Homol$ is called Morita invariant if it sends Morita equivalences to equivalences.
\end{enumerate}
\end{ddd}
A zero object in a $C^{*}$-category is an object whose endomorphism algebra is the zero algebra.
A zero category is a $C^{*}$-category with only zero objects. A homological functor sends zero categories to 
zero.  

A homological functor sends products of $C^{*}$-categories to sums and preserves sums of morphisms to an additive $C^{*}$-category. It in particular annihilates flasque $C^{*}$-categories.  
 
\begin{ddd}[{\cite[Def.~13.5]{cank}}]
\label{wetogjowgrefwerfwrfw}A square of $C^{*}$-categories $$\xymatrix{\bI\ar[r]\ar[d] &\bA \ar[d] \\ \bJ\ar[r] & \bB} $$
 is called excisive if the horizontal functors are inclusions of   ideals
 and the induced map $\bA/\bI\to \bB/\bJ$ of quotients is a unitary equivalence between unital $C^{*}$-categories.
 \end{ddd}
A homological functor sends excisive squares of $C^{*}$-categories to cocartesian squares.

Our basic example of a homological functor is the $K$-theory functor for $C^{*}$-categories
\begin{equation}\label{fqwerfoihjqiowefjqwedewdqwd} \Kcat \colon \nCcat\to \Sp\, ,
\end{equation} 
see \cite[Sec.~14]{cank}.
This functor is finitary, Morita invariant,  and in addition it preserves arbitrary products of additive $C^{*}$-categories by \cite[Thm.~15.7]{cank}.

  \section{Controlled objects}\label{rgijotgweregwrgrgwrg}

\renewcommand{\small}{\mathrm{small}}
\renewcommand{\pr}{\mathrm{pr}}

%
%

In this section we introduce the $C^{*}$-category of objects in a coefficient $C^{*}$-category with $G$-action  which are controlled over a $G$-set.

Let $G$ be a very small group. For any category $\cC$ we let  $\Fun(BG,\cC)$ denote the  category of $G$-objects in $\cC$ and equivariant morphisms.
To any    $  \bD$   in $\Fun(BG,\Ccat)$
 we will associate
  the functor
\begin{equation}\label{ergwnkj23ngkwergwerg}
\bD^{G}(-)\colon G\Set\to \Ccat
\end{equation}
which associates to $X$ in $G\Set$ the unital $C^{*}$-category  of equivariant $X$-controlled objects in $\bD$ and equivariant multiplier morphisms.
We will apply this construction to 
  $\bD=\bM\bC$ for $\bC$ in $\Fun(BG,\nCcat)$. 
Provided that $\bC$ is effectively additive  we will furthermore   define a  full subfunctor 
\begin{equation}\label{ergwnkj23ngkwergwerg1}
\bC_{\pt}^{G}(-)\colon G\Set\to \Ccat
\end{equation}
 of ${(\bM\bC)}^{G}(-)$ such that $\bC^{G}_{\mathrm{pt}}(X)$ is the full subcategory of those objects which are determined on points.

%

We first introduce the $C^{*}$-category $\bD^{G}$  of $G$-objects in 
  $  \bD$.

 \begin{ddd}\label{etgiowergergwegwerg}\mbox{}\begin{enumerate} \item \label{ewrgiuhefdcsdcsdc}
A $G$-object in $  \bD$ is a pair $(D,\rho)$ of an object $D$ in $\bD$ and  a family   of unitary  morphisms   $\rho \coloneqq (\rho_{g})_{g\in G}$, $\rho_{g}\colon D\to gD$  such that
for all $ g,h$ in $G$ we have $g\rho_{h}\circ \rho_{g}=\rho_{gh}$.
\item \label{ewbiojhiobewrvcdfvsdv} A   morphism of $G$-objects $(D,\rho)\to (D^{\prime},\rho')$ is a 
  morphism $A \colon D\to D^{\prime}$ in $\bD$ such that for every $g$ in $G$ we have
$gA\circ \rho_{g}=\rho'_{g}\circ A$. The $\C$-vector space structure on the morphism sets  is inherited from $\bD$.
 \item We let $\bD^{G}$ in $\Ccat$ denote the $C^{*}$-category of  $G$-objects in $  \bD$ and {corresponding} morphisms. The composition and the  involution on $\bD^{G}$ are inherited from $\bD$.
\end{enumerate}
\end{ddd}
 
\begin{rem}
In  this remark we explain why $\bD^{G}$ is well-defined. Note that Condition  \ref{etgiowergergwegwerg}.\ref{ewbiojhiobewrvcdfvsdv} on $A$  is linear and compatible with the composition and involution. This implies    that $\bD^{G}$ is a well-defined object of $\Clincat$. 
We have a forgetful functor
$\bD^{G}\to \bD$ which sends $(D,\rho)$ to $D$ and acts as the natural inclusion on morphism spaces.
Since the Condition  \ref{etgiowergergwegwerg}.\ref{ewbiojhiobewrvcdfvsdv} is also continuous in $A$ we can conclude that
the restriction of the norm of $\bD$ to $\bD^{G}$ exhibits the latter as a $C^{*}$-category.
\hB
 \end{rem}

The construction of the category of $G$-objects is functorial. Let $ \phi\colon  \bD_{0}\to  \bD_{1}$ be a functor in $\Fun(BG,\Ccat)$.
Then we get an induced functor $\phi^{G}\colon \bD_{0}^{G}\to \bD_{1}^{ G}$ as follows:
\begin{enumerate}
\item objects: The functor $\phi^{G}$ sends the object $(D,\rho) $ in $\bD_{0}^{G}$ to the object $(\phi(D),\phi(\rho))$ in $\bD_{1}^{G}$, where $\phi(\rho) \coloneqq (\phi(\rho_{g}))_{g\in G}$.
\item morphisms: The functor $\phi^{G}$ sends a morphism $A \colon (D ,\rho )\to (D',\rho')$ to the morphism $\phi(A)\colon (\phi(D),\phi(\rho))\to (\phi(D'),\phi(\rho'))$.
\end{enumerate}
One checks that $\phi^{G}$ is well-defined, and that $  \phi\mapsto \phi^{G}$ is compatible with compositions.
We therefore obtain a functor
\begin{equation}\label{wgegwregwergwergwegergw}
(-)^{G}\colon \Fun(BG,\Ccat)\to \Ccat\, .
\end{equation}

Let  $ \bD$ be in $\Fun(BG,\Ccat)$. If $D$ is an object of $\bD$, then $\Proj(D)$ denotes the (small) set of selfadjoint projections on $D$.  
Let $ X$ be in $G\Set$.
\begin{ddd}\label{gwergoiegujowergwergwergwerg}
An equivariant $X$-controlled object in $\bD$ is
a triple $(D,\rho,\mu)$  of the following objects:
\begin{enumerate}
\item $(D,\rho)$ is  an object of  $\bD^{G}$. 
 \item $\mu$ is a function $\mu\colon \cP_{X} \to \Proj(D)$. 
\end{enumerate}
The triple must have the following properties:  
\begin{enumerate}[resume]
\item\label{etgijwogerfrewrr} $\mu$ is $G$-equivariant, i.e., we have
\begin{equation}\label{eq_mu_equivariant}
\mu(gY)=\rho_{g}^{-1}\circ g\mu(Y)\circ \rho_{g}
\end{equation}
for all $Y$ in $\cP_{X}$ and $g$ in $G$.
\item \label{riuqhfuiefqwefqef} $\mu$ is finitely additive, i.e., $\mu(\emptyset)=0$ and 
{for all subsets $Y,Z$ of $X$
with $Y\subseteq Z$ we have
$\mu(Z)=\mu(Y)+\mu(Y\setminus Z)$.}

\item \label{roqigjqeriogerqgrqfqwefqwfqwef}$\mu$ is full, i.e., 
$\mu(X)=\id_{D}$.
\end{enumerate}
\end{ddd}

 
 \begin{rem}
 Note that Condition~\ref{gwergoiegujowergwergwergwerg}.\ref{riuqhfuiefqwefqef} implies that 
  for {all} subsets $Y,Y^{\prime}$ of $X$ we have 
 $[\mu(Y),\mu(Y')]=0$. 
  \hB
 \end{rem}

Let $X$ be in $ G\Set$, and let ${ {\bD}}$ be in $\Fun(BG,\Ccat)$. 

\begin{ddd} \label{voiejvosdfvdfvsfvsfd} We define 
  the  $C^{*}$-category $\bD^{G}(X )$ as follows: 
  \begin{enumerate}
  \item objects: The objects of  $\bD^{G}(X )$ are equivariant  $X$-controlled  objects $(D,\rho,\mu)$ in $\bD$.
  \item morphisms: A morphism  $A\colon (D,\rho,\mu)\to  (D',\rho',\mu')$ in   $\bD^{G}(X )$ is a morphism $A\colon (D,\rho)\to (D^{\prime},\rho')$ in $\bD^{G}$.  The $\C$-vector space structure is inherited from $\bD^{G}$.
  \item The composition and the involution are inherited from  $\bD^{G}$. 
  \end{enumerate}
\end{ddd}
  
 It is obvious that $\bD^{G}(X)$ belongs to $\Clincat$. We have a forgetful functor $\bD^{G}(X)\to \bD^{G}$ which sends
 $(D,\rho,\mu)$ to $(D,\rho)$ and is the identity on morphisms. The restriction of the norm on $\bD^{G}$ along this functor exhibits
 $\bD^{G}(X)$ as a $C^{*}$-category.
  
 Note that $\bD^{G}(X)$ is actually a small category.

\begin{ex}\label{wiortgjowergwergregewfe}
The category $\bD^{G}(X)$ may contain pathological objects. 
For simplicity we consider the case of the trivial group.
Fix an ultrafilter $\cU$ on $X$.
   If $D$ is an object of $\bD$, then we could consider the object
 $(D,\rho,\mu)$ in $\bD(X)$ where
\[
\mu(Y)\coloneqq \begin{cases} 0& {\text{if }Y\not\in\cU}\,,\\
\id_{D}& {\text{if }Y\in \cU}\,.\end{cases}
\]
In Definition~\ref{rgoigsgsgfgg} below we will introduce a condition which excludes such objects for {free} ultrafilters.
\hB
\end{ex}
  
  \begin{ex}\label{rehiuqwefqweewdqwdew}
  The category $\bD^{G}(\emptyset)$ consists of zero objects. 
  Indeed, let $(D,\rho,\mu)$ be in $\bD^{G}(\emptyset)$. Then $\id_{D}=\mu(\emptyset)=0$ by
  additivity and fullness.
  \hB
 \end{ex}

 Assume that $f\colon X\to X^{\prime}$ is a morphism in $G\Set$.
 \begin{ddd}\label{erigoj0eorgwergrewgwgergrweg}
 We define a functor  $f_{*}\colon \bD^{G}(X )\to \bD^{G}(X' )$ as follows:
 \begin{enumerate}
 \item objects: $f_{*}(D,\rho,\mu) \coloneqq (D,\rho,f_{*}\mu)$, where $f_{*}\mu \coloneqq \mu\circ f^{-1}$.
 \item morphisms: $f_{*}$ acts as {the} identity. 
 \end{enumerate}
 \end{ddd}

 It is clear that $f_{*}\mu$ is again full and finitely additive.  Since $f$ is $G$-equivariant, $f_{*}\mu$ is again $G$-equivariant.
 Furthermore, the association $f\mapsto f_{*}$ is compatible with compositions.
 
 We fix $ \bD$ in $\Fun(BG,\Ccat)$.
 
 \begin{ddd}\label{wetgiojgreggjwregowiej} We define the functor $\bD^{G}(-)\colon G\Set\to \Ccat$ as follows:
 \begin{enumerate}
 \item objects: The functor $\bD^{G}(-)$ sends $X$ in $G\Set$ to the $C^{*}$-category   $\bD^{G}(X)$ defined in the Definition \ref{voiejvosdfvdfvsfvsfd}.
 \item morphisms: The functor $\bD^{G}(-)$ sends the morphism $f\colon X\to X'$ in $G\Set$ to the functor  $f_{*}$ defined in Definition \ref{erigoj0eorgwergrewgwgergrweg}.
 \end{enumerate}
\end{ddd}

We now consider $\bC$ in $\Fun(BG,\nCcat)$. Then we can apply the constructions above to the multiplier category $\bM\bC$ in  $\Fun(BG,\Ccat)$.
 Let $X$ be in $G\Set $, and let $(C,\rho,\mu)$ be in ${(\bM\bC)}^{G}(X)$.  
  
Recall the Definition \ref{qerghqiferwfqefedeq} of an AV-sum.
  
\begin{ddd}\label{rgoigsgsgfgg}
The object $(C,\rho,\mu)$ is determined on points if $C$ is the orthogonal AV-sum of the images of the family of projections $(\mu(\{x\}))_{x\in X}$. 
\end{ddd}
Explicitly this means that the projection $\mu(\{x\})$ is effective (see Definition \ref{wetijgowegreferfwrefwrefrf}) for every $x$ in $X$ and that $\sum_{x\in X} \mu(\{x\})$ converges strictly to $\id_{C}$.
Note that except for degenerate cases the  pathological  object  in Example~\ref{wiortgjowergwergregewfe} is not determined on points {if the ultrafilter is free.}

\begin{ddd} \label{ergoiejgwegergwergeg}
We define the $C^{*}$-category  $\bC^{G}_{\pt}(X)$  as   the full subcategory of ${(\bM\bC)}^{G}(X)$ of all objects which are determined on points.
\end{ddd}

 Let $X$ be in $G\Set $, $Y$ be a subset of $X$, and let $(C,\rho,\mu)$ be in ${(\bM\bC)}^{G}(X)$.  
Recall the Definition  \ref{wiotgwtgwregewf} of effective additivity. For categories with $G$-action we will apply this definition to the underlying $C^{*}$-category.
\begin{lem}\label{wtigowgfrefrewf}
  If $\bC$ is effectively additive and  
 $(C,\rho,\mu)$ is determined on points,  then $\mu(Y)$ is effective.  
\end{lem} 
\begin{proof}
We have a mutually orthogonal family of  effective projections $(\mu(\{x\}))_{x\in Y}$ and  $\sum_{x\in Y}\mu(\{x\})$  strictly converges to $\mu(Y)$.
Therefore $\mu(Y)$ is  effective by our assumption on $\bC$.
\end{proof}

%

Let $X$ be in $G\Set$
and let $(Y_{i})_{i\in I}$ let be a be partition of $X$ into subsets.
We consider 
  $(C,\rho,\mu)$ be in $\bC^{G}_{\pt}(X)$. 
  \begin{lem}\label{eriogjqwefqewfewfeqdewdq}
If     $\bC$ is effectively additive,  then  
 $C$ is the orthogonal AV-sum of the images of the family of projections $(\mu(Y_{i}))_{i\in I}$.
  \end{lem}
\begin{proof}   In view of Lemma \ref{wtigowgfrefrewf}
 the assumption implies that  the projections $\mu(Y_{i}) $ are effective for all $i$ in $I$.  
  By \cite[Lem. 7.10]{cank} we conclude that $C$
an AV-sum of a family of images of the family $(\mu(Y_{i}))_{i\in I}$. 
%
%
  \end{proof}

We now show that the categories $\bC_{\pt}(X)$ for $X$ in $G\Set$ assemble to a subfunctor of ${(\bM\bC)}^{G}(-)$ provided that $\bC$ is effectively additive.   
 \begin{lem}\label{ergioqjerogwfqrefeqwfqewf} If $\bC$ is effectively additive, then
 the morphism $f_{*}$ in Definition \ref{erigoj0eorgwergrewgwgergrweg} preserves the subcategories 
 of objects which are determined on points.  
 \end{lem}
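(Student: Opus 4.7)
The plan is to deduce this directly from the preceding Lemma \ref{eriogjqwefqewfewfeqdewdq}. Given $(C,\rho,\mu)$ in $\bC^{G}_{\pt}(X)$, I want to show that $f_{*}(C,\rho,\mu)=(C,\rho,f_{*}\mu)$ lies in $\bC^{G}_{\pt}(X')$, i.e.\ that $C$ is the orthogonal sum of the images of the family of projections $(f_{*}\mu(\{x'\}))_{x'\in X'}=(\mu(f^{-1}(\{x'\})))_{x'\in X'}$.

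First I would observe that the fibres $(f^{-1}(\{x'\}))_{x'\in X'}$ form a partition of $X$ into subsets (some of which may be empty, in which case the corresponding projection is $0$ by finite additivity of $\mu$, so they contribute trivially to the orthogonal sum). Then I would apply Lemma \ref{eriogjqwefqewfewfeqdewdq} to the object $(C,\rho,\mu)$, which belongs to $\bC^{G}_{\pt}(X)$ by assumption, and to this partition $(Y_{x'})_{x'\in X'}$ with $Y_{x'}\coloneqq f^{-1}(\{x'\})$. Since $\bC$ is assumed to be idempotent complete, the lemma applies and yields that $C$ is the orthogonal sum of the images of the family $(\mu(Y_{x'}))_{x'\in X'}=(\mu(f^{-1}(\{x'\})))_{x'\in X'}$. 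By the definition of $f_{*}\mu$ (Definition \ref{erigoj0eorgwergrewgwgergrweg}) this is exactly the family $(f_{*}\mu(\{x'\}))_{x'\in X'}$.

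This verifies that $f_{*}(C,\rho,\mu)$ is determined on points in the sense of Definition \ref{rgoigsgsgfgg}, completing the proof. There is no real obstacle here; the content of the statement has already been packaged into Lemma \ref{eriogjqwefqewfewfeqdewdq}, and the only thing to check is that pulling back the point-measure along $f$ corresponds precisely to evaluating $\mu$ on the partition by fibres of $f$, which is immediate from $f_{*}\mu\coloneqq\mu\circ f^{-1}$.
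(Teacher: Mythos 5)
Your proof is correct and is essentially the same as the paper's: both apply Lemma \ref{eriogjqwefqewfewfeqdewdq} to the partition of $X$ by the fibres $(f^{-1}(\{x'\}))_{x'\in X'}$ and then identify the resulting family of projections with $(f_{*}\mu(\{x'\}))_{x'\in X'}$ via $f_{*}\mu=\mu\circ f^{-1}$. The only difference is that you spell out the remark about empty fibres giving zero projections, which the paper leaves implicit.
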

 \begin{proof} Let $f\colon X\to X'$ be a morphism in $G\Set$,  and let $(C,\rho,\mu)$ be an object in $\bC^{G}_{\pt}(X)$.
 We must show that $ (C,\rho,f_{*}\mu)$ is determined on points, i.e., 
  that $C$ is isomorphic to the sum of the images of the family of projections $((f_{*}\mu)(\{x'\}))_{x'\in X'}$.
But this follows from Lemma \ref{eriogjqwefqewfewfeqdewdq} applied to the partition  $(f^{-1}(\{x'\}))_{x'\in X'}$ of $X$.
%
%
%
   \end{proof}
 
 In view of Lemma \ref{ergioqjerogwfqrefeqwfqewf} we can now make the following definition.
  Recall that $\bC$ is in  $\Fun(BG,\nCcat)$. In addition, we now  assume that $\bC$ is effectively additive.
 \begin{ddd}\label{wetgiojgreggjwregowiej1} We  define the functor $$\bC_{\pt}^{G}(-)\colon G\Set\to \Ccat$$ as follows:
 \begin{enumerate}
 \item objects: The functor $\bC_{\pt}^{G}(-)$ sends $X$ in $G\Set$ to the $C^{*}$-category   $\bC_{\pt}^{G}(X)$ defined in  Definition \ref{ergoiejgwegergwergeg}.
 \item morphisms: The functor $\bC_{\pt}^{G}(-)$ sends a morphism $f\colon X\to X'$ in $G\Set$ to the restriction of the  functor  $f_{*}$ defined in Definition \ref{erigoj0eorgwergrewgwgergrweg} to the subcategories.
 \end{enumerate}
\end{ddd}

    For fixed $X$ in $G\Set$ the constructions of ${(\bM\bC)}^{G}(X)$  and $\bC^{G}_{\pt}(X)$ are functorial for equivariant morphisms $\phi\colon \bC\to \bC'$ of   coefficient categories  
    which are in addition non-degenerate.

 We now employ the notation introduced in Definition \ref{weiotjgoegergerfwrefer}.
 Let  $  \phi \colon  \bC_{0}\to  \bC_{1}$ be a  morphism  in $\Fun(BG,\ndCcat)$. Then using  \eqref{wgegwregwergwergwegergw} we 
     define a functor
\begin{equation}\label{rewgweighogiergwerefwref}
\bM\phi^{G}(X)\colon {(\bM\bC_{0})}^{G}(X)\to {(\bM\bC_{1})}^{G}(X)
\end{equation}
    as follows:
     \begin{enumerate}
     \item objects: The functor $\bM\phi^{G}(X)$ sends the object $(C,\rho,\mu)$ in ${(\bM\bC_{0})}^{G}(X)$ to  the object $(\phi(C),\bM\phi(\rho),\bM\phi(\mu))$ in ${(\bM\bC_{1})}^{G}(X)$, where $\bM\phi(\mu)(Y) \coloneqq \bM\phi(\mu(Y))$ for all subsets $Y$ of $X$.
     \item morphisms: The functor $\bM\phi^{G}(X)$ sends a morphism $A\colon (C,\rho,\mu)\to (C',\rho',\mu')$ in ${(\bM\bC_{0})}^{G}(X)$ to the morphism $$\bM\phi(A) \colon (\phi(C),\bM\phi(\rho),\bM\phi(\mu))\to (\phi(C'),\bM\phi(\rho'),\bM\phi(\mu'))$$ in ${(\bM\bC_{1})}^{G}(X)$.
     \end{enumerate} 
We see that we actually  have constructed  a functor
\begin{equation}\label{freferwwrevervwervrvds}
G\Set\times \Fun(BG,\ndCcat)\to \Ccat\, , \quad (X,  \bC)\to {(\bM \bC)}^{G}(X)\, .
\end{equation}
The restriction of this functor to $G\Set\times \Fun(BG,\ndeCcat)$
admits the subfunctor 
  \begin{equation}\label{freferwwrevervwervrvds1}
G\Set\times \Fun(BG, \ndeCcat)\to \Ccat\, , \quad (X,  \bC)\to \bC_{\mathrm{pt}}^{G}(X)\, .
\end{equation} 
 

\section{Small objects and Roe categories}  \label{fiughiufvfdcasdcscdscca}


We consider  $ \bC$ in $\Fun(BG,\nCcat)$.

\begin{ass}\label{ogjkporgwegregrefwf} {\em As a standing hypothesis, when we talk about $\bC^{G}_{\pt}$ and objects derived from this like $ \bCgsmc $ we will in addition assume that $\bC$ is effectively additive.} \hB
\end{ass}

Later we will assume in addition that  $\bC$ admits countable AV-sums. 
 Then for $X$  in $G\Set$
the  $C^{*}$-categories ${(\bM\bC)}^{G}(X)$ and  $\bC^{G}_{\pt}(X)$  are in general too large to be interesting from a $K$-theoretic point of view. Indeed, the condition on $\bC$ implies that  ${(\bM\bC)}^{G}(X)$ and $\bC^{G}_{\pt}(X)$ also admit countable AV-sums  and are therefore flasque, see Definition \ref{erjigoqgfedewdq} and the subsequent text.
But flasque $C^{*}$-categories have trivial $K$-theoretic invariants by \cite[Prop.~13.13]{cank}.

 In this section we  define subcategories of small objects $\bCgsm(X)$ in ${(\bM\bC)}^{G}(X)$, and $\bCgsmc(X)$ in $\bC_{\pt}^{G}(X)$.
The notion of a small object   depends on  the choice of a $G$-bornology on $X$. 

We have a forgetful functor $G\Born\to G\Set$ which sends a $G$-bornological space to its underlying $G$-set (see Definition \ref{ebjoifdbsdbsfdbsbsdb}). By precomposition with this forgetful functor we can consider the functors ${(\bM\bC)}^{G}(-)$ (Definition \ref{wetgiojgreggjwregowiej}) and $\bCgc(-)$ (Definition \ref{wetgiojgreggjwregowiej1}) as functors on $G\Born$.
We will define a subfunctor 
$$\bCgsm \colon G\Born\to   \Ccat \quad \text{of}\quad G\Born \xrightarrow{{(\bM\bC)}^{G}(-)} \Ccat\,.$$
Taking Assumption \ref{ogjkporgwegregrefwf} into account 
we will furthermore define a  subfunctor 
 $$\bCgsmc\colon G\Born\to   \Ccat \quad \text{of}\quad G\Born \xrightarrow{\bCgc(-)} \Ccat\, .$$ 
 
 Let $X$ be in $G\Born$ with bornology $\cB_{X}$, and
 let $(C,\rho,\mu)$ be an object of ${(\bM\bC)}^{G}(X)$. Note that the $\mu$ takes values in  $\End_{\bM\bC}(C)$, and that $ \End_{\bC}(C)$ can be considered a subset of  $\End_{\bM\bC}(C)$.
  
\begin{ddd}\label{wqgfuihergiugwefqwfef}
$(C,\rho,\mu)$ is  small if  for every  $B$  in $\cB_{X}$ we have $\mu(B) \in \End_{\bC}(C) $.
  \end{ddd}


\begin{ddd}\label{ergioewrjgwergwergwergweg}
We  let  $\bCgsm(X)$ be the full subcategory of  ${(\bM\bC)}^{G}(X)$ of small objects.
\end{ddd}

Let $f\colon X\to X^{\prime}$ be a morphism in $G\Born$.
\begin{lem}\label{erogwergwergregweg}
The morphism $f_{*}\colon \bCg(X)\to \bCg(X')$ preserves  small objects.
\end{lem}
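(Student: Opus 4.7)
The plan is a direct unwinding of definitions; the key input is the properness of the morphism $f$ in $G\Born$. Let $(C,\rho,\mu)$ be an object of $\bCgsm(X)$. By Definition \ref{erigoj0eorgwergrewgwgergrweg} the functor $f_{*}$ sends this object to $(C,\rho,f_{*}\mu)$ with $f_{*}\mu = \mu\circ f^{-1}$, so to verify that this lies in $\bCgsm(X')$ I must check, for every $B'$ in $\cB_{X'}$, that
\[
(f_{*}\mu)(B') = \mu(f^{-1}(B')) \in \End_{\bK}(C)\,.
\]

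Since $f\colon X\to X'$ is a morphism in $G\Born$ it is proper in the sense of Definition \ref{etgiohjroifgjqrofiqfewfqew}, i.e.\ $f^{-1}(\cB_{X'})\subseteq \cB_{X}$. Hence $f^{-1}(B')\in \cB_{X}$, and the smallness of $(C,\rho,\mu)$ (Definition \ref{wqgfuihergiugwefqwfef}) yields $\mu(f^{-1}(B'))\in \End_{\bK}(C)$, as required. There is no real obstacle: the argument is entirely formal and relies only on the compatibility of properness of $f$ with the smallness condition, which is tailored precisely for this purpose.
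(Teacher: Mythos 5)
Your proof is correct and follows essentially the same approach as the paper: both arguments unwind the definition of $f_*\mu$, invoke properness of $f$ to conclude $f^{-1}(B')\in\cB_X$, and then apply the smallness hypothesis on $(C,\rho,\mu)$.
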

\begin{proof}
Let $(C,\rho,\mu)$ be in $\bCgsm(X)$. We must check that $f_{*}(C,\rho,\mu)=(C,\rho,f_{*}\mu)$ is small.
Let $B'$ be  in $\cB_{X'}$. Then we have $f^{-1}(B)\in \cB_{X}$ since  $f$ is proper. This implies that $(f_{*}\mu)(B')=\mu(f^{-1}(B))\in \End_{\bC}(C)$. 
\end{proof}

\begin{ddd}
We define the functor $\bCgsm\colon G\Born\to \Ccat$  as the subfunctor of 
 $\bC^{G}(-)\colon G\Born\to \Ccat$  which sends $X$ in $G\Born$ to the full subcategory $\bCgsm(X)$ of small objects in ${(\bM\bC)}^{G}(X)$ defined in Definition 
 \ref{ergioewrjgwergwergwergweg}.
\end{ddd}
 The functor $\bCgsm$ is well-defined in view of Lemma \ref{erogwergwergregweg}.
   
 Let $X$ be in $G\Set$, and  let
 $(C,\rho,\mu)$ be an object in $\bC^{G}(X)$.
 \begin{ddd}\label{erigojegerwfvc}
 We define the support of $(C,\rho,\mu)$ by
 $$\supp(C,\rho,\mu)\coloneqq \{x\in X\:|\: \mu(\{x\})\not=0\}\, .$$
 \end{ddd}

Note that $\supp(C,\rho,\mu)$ is a $G$-invariant subset of $X$.

Let $X$ be in {$G\Born$,} and let
  $(C,\rho,\mu)$ be an object of $ \bCgc(X)$.
  \begin{ddd}\label{rthoperthrtherthergtrgertgertretr}
  \label{wqgfuihergiugwefqwfef4444} $(C,\rho,\mu)$ is  locally
 finite if:
  \begin{enumerate}
  \item $(C,\rho,\mu)$ is small.
  \item \label{qwriuheivnfvkjvvcd9} $\supp(C,\rho,\mu)$ is a locally finite subset of $X$ (see Definition \ref{rgoiqrgrfqwfewfq}).
  \end{enumerate}
     \end{ddd}

\begin{ddd}
We let $\bCgsmc(X)$ denote the full subcategory of $\bCgc(X)$ of 
locally finite objects.
\end{ddd}


Let $f\colon X\to X^{\prime}$ be a morphism in $G\Born$.
Recall Assumption \ref{ogjkporgwegregrefwf}.
\begin{lem}\label{tgoiwgwregwegwegregw}
The morphism $f_{*}\colon \bCgc(X)\to  \bCgc(X')$ preserves
locally finite objects.
\end{lem}
\begin{proof}
By Lemma \ref{erogwergwergregweg} the morphism $f_{*}$ preserves small objects. By Lemma \ref{ergioqjerogwfqrefeqwfqewf} it preserves the condition of being determined on points.

We  now argue that  for an object $(C,\rho,\mu)$  in $\bCgc(X)$ we have
  $$\supp f_{*}(C,\rho,\mu)\subseteq f(\supp(C,\rho,\mu))\, .$$
  Let $x'$ be a point in $\supp f_{*}(C,\rho,\mu)$. Then we have $(f_{*}\mu)(\{{x'}\})=\mu(f^{-1}(\{x'\}))\not=0$.  
  Since $(C,\rho,\mu)$ is determined on points, there exists an $x$ in $f^{-1}(\{x'\})$ such that $\mu(\{x\})\not=0$. Hence $x'\in  f(\supp(C,\rho,\mu))$.
  
  Since $f$  is proper, it preserves locally finite subsets.  Consequently, $\supp f_{*}(C,\rho,\mu)$ is locally finite. 
  \end{proof}

\begin{ddd}
We define the functor $\bCgsmc\colon G\Born\to \Ccat$ as the subfunctor of $\bC^{G}_{\pt}\colon G\Born\to \Ccat$ which sends $X$ in $G\Born$ to 
the full subcategory $\bCgsmc(X)$ of   locally finite objects in $\bC^{G}_{\pt}(X)$  defined in Definition \ref{rthoperthrtherthergtrgertgertretr}.
\end{ddd}

Note that the functor $\bCgsmc$ is well-defined in view of Lemma \ref{tgoiwgwregwegwegregw}. 
%
%

Note that morphisms in $\bCgsm(X)$ or $\bCgsmc(X)$ are just morphisms in ${(\bM\bC)}^{G}$. 
In the following we define wide subcategories of these categories by putting additional conditions on the morphisms. 
These conditions will depend on an additional coarse structure on $X$ (see Definition \ref{trbertheheht}).

We have a forgetful functor $G\BC\to G\Born$ which forgets the coarse structure. By precomposition with this forgetful  functor we can consider
$\bCgsm$  and $\bCgsmc$ as functors on $G\BC$.
 Employing the coarse structure we will  construct  subfunctors
   \begin{equation}\label{sfdvpmvklsfdvsdfvs}
  \bCgtsm  \colon G\BC\to \Ccat \quad \text{of} \quad G\BC   \xrightarrow{\bCgsm} \Ccat\, .
\end{equation} and
 \begin{equation}\label{sfdvpmvklsfdvsdfverferferfs}
 \bCgtsmc\colon G\BC\to \Ccat \quad \text{of} \quad G\BC   \xrightarrow{\bCgsmc} \Ccat\, .
\end{equation}

  Let $X$ be a set,   
 $U$  be  in $\cP_{X\times X}$, and let   $Y,Y^{\prime}$ be in $\cP_{X}$.
 
 \begin{ddd}
 We say that $Y'$ is $U$-separated from $Y$ if  $ U[Y]\cap Y'=\emptyset$.
 \end{ddd}
  Let $X$ be in $G\Born$, let
   $(C,\rho,\mu) $ and $ (C^{\prime},\rho',\mu')$  be two objects in  $\bCg(X)$,  and  let $A\colon (C,\rho,\mu)\to(C^{\prime},\rho',\mu') $ be a morphism in $ \bCg(X) $. Let $U$ be in $\cP_{X\times X}$.
\begin{ddd}\label{giojerogefrefwefrewf}  We say that $A$
  is    $U$-controlled  if
for every two elements $Y,Y^{\prime}$ in $\cP_{X}$  such that $Y'$ is $U$-separated from $Y$
we have
\begin{equation}\label{qwfewfewqfewedqewd1rr}
\mu'(Y')A\mu(Y)=0\, .
\end{equation}
\end{ddd}


Assume now that $X$ is $G\BC$  with coarse structure $\cC_{X}$, and let $A$ be as above.
 \begin{ddd}
 We say that $A$ is   controlled  if there exists a  $V$ in $\cC_{X}$ such that $A$  is  $V$-controlled.
 
\end{ddd}

Let $X$ be in $G\BC$.
\begin{lem}
The controlled morphisms in $\bCgsm(X )$ form a unital $\C$-linear $*$-category.
 \end{lem}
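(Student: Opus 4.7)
The plan is to verify the four conditions---containing identities, closure under $\C$-linear combinations, closure under composition, and closure under the involution---by showing that each operation on morphisms produces something controlled by an entourage built from the input entourages via the operations under which $\cC_X$ is closed (Definition~\ref{trbertheheht}.\ref{qerighioergergwgergwergwergwerg}). Since smallness is a property of objects and not of morphisms, the category of controlled morphisms on the same class of objects as $\bCgsm(X)$ is automatically well-defined once closure at the level of morphisms is established.

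For the identity $\id_C$ on $(C,\rho,\mu)$, I would show that it is $\diag(X)$-controlled. If $Y'$ is $\diag(X)$-separated from $Y$, then $Y \cap Y' = \emptyset$, so finite additivity of $\mu$ yields $\mu(Y')\mu(Y) = \mu(Y \cap Y') = 0$, which is exactly $\mu(Y')\id_C\mu(Y) = 0$; and $\diag(X)\in \cC_X$. For linear combinations, if $A$ is $U$-controlled and $B$ is $V$-controlled, then $A+B$ is $(U\cup V)$-controlled: any pair $(Y,Y')$ with $(U\cup V)[Y]\cap Y' = \emptyset$ satisfies both $U[Y]\cap Y' = \emptyset$ and $V[Y]\cap Y' = \emptyset$, and $U\cup V\in \cC_X$. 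Scalar multiplication is obvious. For the involution, if $A$ is $U$-controlled, then $A^*$ is $U^{-1}$-controlled: if $U^{-1}[Z]\cap Z' = \emptyset$ then equivalently $U[Z']\cap Z = \emptyset$, so
\[
\mu(Z')\, A^* \mu'(Z) = (\mu'(Z)\, A\, \mu(Z'))^* = 0,
\]
and $U^{-1}\in \cC_X$.

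The main step is composition. Given $A\colon (C,\rho,\mu)\to (C',\rho',\mu')$ that is $U$-controlled and $B\colon (C',\rho',\mu')\to (C'',\rho'',\mu'')$ that is $V$-controlled, I claim $BA$ is $(V\circ U)$-controlled. Fix $Y, Y''$ with $(V\circ U)[Y]\cap Y'' = \emptyset$. The trick is to insert $\id_{C'} = \mu'(U[Y]) + \mu'(X\setminus U[Y])$ (possible by finite additivity and fullness) between $B$ and $A$, obtaining
\[
\mu''(Y'')\, BA\, \mu(Y) \;=\; \mu''(Y'')\, B\,\mu'(U[Y])\, A\, \mu(Y) \;+\; \mu''(Y'')\, B\, \mu'(X\setminus U[Y])\, A\, \mu(Y).
\]
The second summand vanishes because $X\setminus U[Y]$ is $U$-separated from $Y$, so $U$-controlledness of $A$ gives $\mu'(X\setminus U[Y])\, A\, \mu(Y) = 0$. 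The first summand vanishes because $V[U[Y]] = (V\circ U)[Y]$ is disjoint from $Y''$, so $Y''$ is $V$-separated from $U[Y]$, and $V$-controlledness of $B$ gives $\mu''(Y'')\, B\, \mu'(U[Y]) = 0$. Finally $V\circ U\in \cC_X$ by closure of $\cC_X$ under compositions.

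The only subtle point in the whole argument is this composition step, which relies crucially on the finite additivity and fullness of $\mu'$ to split the intermediate identity into two complementary projections, one of which is killed by each of the factors; everything else is a routine use of the axioms of a coarse structure.
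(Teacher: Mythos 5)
Your proof is correct and follows essentially the same approach as the paper: in particular, the key composition step (inserting $\mu'(X)=\mu'(U[Y])+\mu'(X\setminus U[Y])$ by fullness and finite additivity and killing each summand by controlledness of the respective factor) is exactly the argument given there, as are the entourage assignments $\diag(X)$, $U\cup V$, $U^{-1}$, and $V\circ U$ for the identity, sum, adjoint, and composition.
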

\begin{proof} 
 We must show that the identities are controlled, and that the controlled morphisms are preserved by the $\C$-linear structure, the composition and the involution.
 The following assertions are straightforward to check.
 
 If $ (C,\rho,\mu) $ is $\bCgsm(X )$, then $\id_{C}\colon (C,\rho,\mu) \to  (C,\rho,\mu) $ is $\diag(X)$-controlled.
 
 If $A\colon (C,\rho,\mu)\to(C^{\prime},\rho',\mu') $ is $U$-controlled for some $U$ on $\cC_{X}$, then $\lambda A$ is $U$-controlled for all $\lambda$ in {$\IC$.}

 If $Y$ is $U^{-1}$-separated from $Y'$, then $Y'$ is $U$-separated from $Y$. This implies  that $A^{*}$ is $U^{-1}$-controlled provided $A$ was $U$-controlled.

 For $i$ in $\{0,1\}$ let 
  $A_{i}\colon (C,\rho,\mu)\to(C^{\prime},\rho',\mu') $ be $U_{i}$-controlled morphisms for $U_{i}$ in $\cC_{X}$.
  Then $A_{0}+A_{1}$ is $U_{0}\cup U_{1}$-controlled.
  
     We now consider the composition.
 Assume that $B\colon (C,\rho,\mu)\to(C^{\prime},\rho',\mu') $ is a $U$-controlled morphism  and
 $A\colon (C',\rho',\mu')\to(C'',\rho'',\mu'') $  is a $V$-controlled morphism in $\bCgsm(X )$.
 Then we shall see that $A\circ B$ is $V\circ U$-controlled. 
Assume that $Y''$ is $V\circ U$-separated from $Y$. 
 Then $Y''$  is $V$-separated from $U[Y]$ and $X\setminus U[Y] $ is $U$-separated from $Y$.
 We  now use the equalities $AB=A\mu'(X)B$ (since $\mu'$ is full) and $\mu'(X)=\mu'(U[Y]) +\mu'(X\setminus U[Y])$ (by finite additivity of $\mu'$) to get
 $$\mu''(Y'')AB\mu(Y)=\mu''(Y'')A\mu'(U[Y])B\mu(Y)+\mu''(Y'')A\mu'(X\setminus U[Y])B\mu(Y)=0$$
 since $\mu''(Y'')A\mu'(U[Y])=0$ and $\mu'(X\setminus U[Y])B\mu(Y)=0$.
 

 Since $\cC_{X}$ contains $\diag(X)$ and  is closed under unions, compositions and inverses we see that 
 the controlled morphisms in $\bCgsm(X)$ form a wide 
 unital $\C$-linear $*$-subcategory. 
 \end{proof}

\begin{ddd}\label{ergiowjegoerfwferfwefwe}
We let $\Cgtsm(X)$ in $\Clincat$ be the wide $\C$-linear $*$-subcategory of controlled morphisms in $\bCgsm(X)$. 
\end{ddd}

 Note that $\Cgtsm (X )$  is in general not a $C^{*}$-category. 
 \begin{ex} \label{qeroigqreffqewfqewfqewf}
 If $X$ has a maximal entourage, then $\Cgtsm (X )$ is a $C^{*}$-subcategory of $\bCgsm(X)$. 
 Indeed, if $U$ is such a maximal entourage, then 
 it suffices to require that the morphisms are $U$-controlled. 
 This condition determines a closed subcategory.\hB
  \end{ex}

Let $f\colon X\to X^{\prime}$ be  a morphism in $G\BC$.
\begin{lem}\label{ewgkowerpgwergergewgw}
The morphism $f_{*}\colon \bCgsm(X)\to \bCgsm(X')$ preserves controlled morphisms.
\end{lem}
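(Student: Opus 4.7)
The plan is to show that if $A\colon (C,\rho,\mu)\to (C',\rho',\mu')$ is $U$-controlled for some $U\in\cC_X$, then the same underlying morphism $A$, now viewed as a morphism $f_*(C,\rho,\mu)\to f_*(C',\rho',\mu')$, is $U'$-controlled for $U'\coloneqq (f\times f)(U)$. Since $f$ is a morphism in $G\BC$ and hence controlled in the sense of Definition~\ref{trbertheheht}, we have $U'\in \cC_{X'}$, so $U'$ is a legitimate witness of controlledness on the target.

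To verify the defining condition in Definition~\ref{giojerogefrefwefrewf}, I take two subsets $Y,Y'\in\cP_{X'}$ with $Y'$ $U'$-separated from $Y$, i.e., $U'[Y]\cap Y'=\emptyset$, and I must show that $(f_*\mu')(Y')\,A\,(f_*\mu)(Y)=\mu'(f^{-1}(Y'))\,A\,\mu(f^{-1}(Y))=0$. The key set-theoretic step is to show that $f^{-1}(Y')$ is $U$-separated from $f^{-1}(Y)$ in $X$. Indeed, if some $x\in U[f^{-1}(Y)]\cap f^{-1}(Y')$ existed, there would be $z\in f^{-1}(Y)$ with $(z,x)\in U$, so $(f(z),f(x))\in (f\times f)(U)=U'$ with $f(z)\in Y$; hence $f(x)\in U'[Y]\cap Y'$, contradicting separation. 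Once this separation in $X$ is established, $U$-controlledness of $A$ in the sense of Definition~\ref{giojerogefrefwefrewf} gives the required vanishing.

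I should also briefly note that $f_*$ is already known (by Lemma~\ref{erogwergwergregweg}) to send objects of $\bCgsm(X)$ to objects of $\bCgsm(X')$, so the statement about controlled morphisms makes sense. I do not expect any genuine obstacle here: the argument reduces to the elementary observation that the preimage under a controlled map of a $U'$-separated pair is a $U$-separated pair, which is a direct verification. The only subtlety is to keep track of the fact that $f_*$ acts as the identity on the underlying morphisms while transporting the projection-valued measures by $\mu\mapsto\mu\circ f^{-1}$, so the relevant identity $(f_*\mu)(Z)=\mu(f^{-1}(Z))$ is what allows us to transfer the vanishing condition on $X'$ back to $X$.
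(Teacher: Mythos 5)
Your proof is correct and follows essentially the same route as the paper's: both show $A$ is $(f\times f)(U)$-controlled by reducing the separation condition on $X'$ to the corresponding separation on $X$. The only cosmetic difference is that you argue the separation by contradiction, whereas the paper cites the inclusion $V[f^{-1}(Y)]\subseteq f^{-1}\big((f\times f)(V)[Y]\big)$, which is the same observation phrased positively.
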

\begin{proof}
Let $A\colon (C,\rho,\mu)\to (C',\rho',\mu')$ be a $V$-controlled morphism in $\bCgsm(X)$ for $V$ in $\cC_{X}$.
We claim that $f_{*}(A)=A\colon f_{*}(C,\rho,\mu)\to f_{*} (C',\rho',\mu')$ is $(f\times f)(V)$-controlled.

Let $Y, Y'$ be in $\cP_{X'}$ and assume that 
 $Y'$ is  $(f\times f)(V)$-separated from $Y$.
 Then $f^{-1}(Y')$ is $V$-separated from $f^{-1}(Y)$.
 This follows from the obvious inclusion
 $$V[f^{-1}(Y)]\subseteq f^{-1}(f\times f)(V)[Y]\, .$$
 {The claim follows now from}
\[(f_{*}\mu')( Y')A  (f_{*}\mu)( Y)= \mu'(  f^{-1}(  Y'))A( \mu( f^{-1}( Y))=0\,.\qedhere\]
  \end{proof}

\begin{ddd}
We define the functor
$\Cgtsm \colon G\BC\to \Clincat$
as the subfunctor of $\bCgsm \colon G\BC\to \Ccat $ which sends $X$ in $G\BC$ to
the wide $\C$-linear $*$-subcategory of controlled morphisms in $\bCgsm(X)$ defined in Definition \ref{ergiowjegoerfwferfwefwe}.
 \end{ddd}

Note that $\Cgtsm$ is well-defined in view of Lemma \ref{ewgkowerpgwergergewgw}.

Recall Assumption \ref{ogjkporgwegregrefwf}.
 \begin{ddd} \label{jergoiwregwerfrefwfref}We define the 
 functor 
  \begin{equation}\label{bwervewvcweveerwevwervwe}
\Cgtsmc\colon G\BC\to \Clincat\, , \quad X\mapsto \Cgtsm(X)\cap \bCgsmc (X)\, ,
\end{equation}
where the intersection takes place in $\bCgsm(X)$.
 \end{ddd}

Note that $\Cgtsm$ is a subfunctor of $\bCgsm$, and $\Cgtsmc$ is a subfunctor of $\bCgsmc$. 
The inclusions $\Cgtsm(X)\to  \bCgsm(X)$ or $\Cgtsmc(X)\to \bCgsmc(X)$ induce norms on the subcategories.
We can form the closures (on the level of morphism spaces) objectwise  in order to obtain $C^{*}$-category-valued subfunctors.

\begin{ddd}\label{eihioqwefqwfewfqwefqewf}\mbox{}
\begin{enumerate} \item 
We define the functor $$\bCgtsm\colon G\BC\to \Ccat$$ as follows:
\begin{enumerate}
\item objects: For $X$ in $G\BC$ we let  $\bCgtsm(X)$
be  the closure of $\Cgtsm(X )$ with respect to the norm induced from $\bCgsm(X)$.
\item\label{item_morphism_Gctrsm} morphisms:  If $f\colon X\to X^{\prime}$ is any morphism in $G\BC$, then  we let 
$f_{*} \colon \bCgtsm(X )\to \bCgtsm(X' )$ be the {corresponding} restriction of the morphism  $f_{*}\colon \bCgsm (X)\to \bCgsm(X^{\prime})$.
\end{enumerate}
 \item  \label{eghqfijewofewfewfqfedqdwedwqed}
We define the functor $$\bCgtsmc\colon G\BC\to \Ccat$$ as follows:
\begin{enumerate}
\item objects: For $X$ in $G\BC$ we let  $\bCgtsmc(X)$
be  the closure of $\Cgtsmc(X )$ with respect to the norm induced from $\bCgsmc(X)$.
\item\label{item_morphism_Gctrlf} morphisms:  If $f\colon X\to X^{\prime}$ is any morphism in $G\BC$, then  we let 
$f_{*} : \bCgtsmc(X )\to \bCgtsmc(X' )$ be the {corresponding} restriction of the morphism  $f_{*}\colon \bCgsmc (X)\to \bCgsmc(X^{\prime})$.
\end{enumerate}
\end{enumerate}

\end{ddd}
 \begin{rem}
In view of the similarity of the definition of the categories $ \bCgtsm(X )$ and $ \bCgtsmc(X )$ with the definition of Roe algebras
we call them Roe categories.
\hB
\end{rem}

 This finishes the construction of the functors \eqref{sfdvpmvklsfdvsdfvs}  {and \eqref{sfdvpmvklsfdvsdfverferferfs}.}

%
%
%
  
For fixed $X$ in $G\BC$  the constructions of $\bCgtsm(X)$ and
   $\bCgtsmc(X)$   are functorial in the datum $ \bC$ as we explain now. 
   Here we use the functors from
  \eqref{freferwwrevervwervrvds} and \eqref{freferwwrevervwervrvds1}.
   
    We consider  a morphism $ \phi\colon   \bC\to  \bD$ in $\Fun(BG,\ndCcat)$.   In the case of $\bCgtsmc$  we in addition assume that it belongs to $  \Fun(BG, \ndeCcat)$. 
 
 We first observe,  using that the restriction of $\bM\phi$ to $\bC$ is precisely $\phi$ itself,  that if $(C,\rho,\mu)$ is in $\bCgsm(X)$ or $\bCgsmc(X)$, then $(\phi(C),\bM\phi(\rho),\bM\phi(\mu))$ is in $\bD^{G}_{\mathrm{sm}}(X)$ or  $\bD^{G}_{\mathrm{lf}}(X)$. 
  
   If $A\colon (C,\rho,\mu)\to (C',\rho',\mu')$ is a morphism in $\bCgsm(X)$ which is $U$-controlled, then
   $\bM\phi(A)\colon (\phi(C),\bM\phi(\rho),\bM\phi(\mu))\to (\phi(C'),\bM\phi(\rho'),\bM\phi(\mu'))$ is also $U$-controlled.
   It follows that $\phi^{G}(X)$ in \eqref{rewgweighogiergwerefwref}  induces  by restriction functors
   \[\Cgtsm(X)\to \bD^{G,\mathrm{ctr}}_{\mathrm{sm}}(X)\, , \quad \Cgtsmc(X)\to \bD^{G,\mathrm{ctr}}_{\mathrm{lf}}(X)\, .\]  
   
    Consequently, $\phi^{G}(X)$ also preserves the completions and restricts to 
 functors
      \[\bCgtsm(X)\to \mathbf{\bar D}^{G,\mathrm{ctr}}_{\mathrm{sm}}(X)\, , \quad   \bCgtsmc(X)\to \mathbf{\bar D}^{G,\mathrm{ctr}}_{\mathrm{lf}}(X)\,.\]
In conclusion,  we actually have constructed functors 
  \begin{equation}\label{freferwwrevervwervrvdseewewe}
G\BC\times \Fun(BG, \ndCcat)\to \Ccat\, , \quad (X,\bC)\to \bCgtsm(X)
\end{equation}
and
\begin{equation}\label{freferwwrevervwervrvds1wewewewe}
G\BC\times \Fun(BG,\ndeCcat)\to \Ccat\, , \quad (X,\bC)\to \bCgtsmc(X)\, .
\end{equation}

 \section{Properties of  the functors \texorpdfstring{$\bCgtsm$}{CGctrsm} and \texorpdfstring{$\bCgtsmc$}{CGctrlf}}\label{eriugheriuvfvfvfvfvsvfvfdvsfs}

In Section \ref{fiughiufvfdcasdcscdscca} we introduced the functors   $\bCgtsm$ and  $\bCgtsmc$  from 
$G\BC$ to $\Ccat$.
 These functors are an intermediate step towards our desired coarse homology theories, which will be obtained by  composing them with  homological functors, see Definition  \ref{wetgjwieoferferfefwre}.  In the present section we prepare the verification of the axioms of a coarse homology theory by verifying similar properties for $\bCgtsm$ and  $\bCgtsmc$. Note that the target of these functors 
    is  not stable. So in particular the formulation of the corresponding excision
  property in Lemma \ref{eruigzheiugwergwergvwergr}  is not yet exactly the property described in  Definition \ref{wetgiwoergregwrgregwg}.

We fix $  \bC$ in $\Fun(BG, \nCcat)$ and adopt Assumption \ref{ogjkporgwegregrefwf}. 
Let $X$ be in $G\BC$. Recall Definition \ref{ewrgiojwgerwfweferfw} of an additive $C^{*}$-category.
\begin{lem}\label{refhfiueadscvadsc} 
 If $\bM\bC$ is   additive, then $ \bCgtsm  (X)$ and $  \bCgtsmc(X)$  are additive.
 
\end{lem}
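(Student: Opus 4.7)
The plan is to verify that $\bCgtsm(X)$ and $\bCgtsmc(X)$ admit a zero object and binary orthogonal sums, which together with the fact that they are $C^{*}$-categories gives additivity in the sense of \cite[Def.\ 3.5]{cank}. For the zero object I would take the zero object $0$ of $\bC$ equipped with the trivial $G$-action $\rho_{g} = \id_{0}$ and the constant measure $\mu_{0}(Y) \coloneqq 0$. All the axioms of Definition \ref{gwergoiegujowergwergwergwerg} collapse to the tautology $\id_{0} = 0$; this object is automatically small, determined on points, and locally finite (with empty support), and every morphism to or from it is the zero morphism, hence $\diag(X)$-controlled.

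For binary orthogonal sums, given two objects $(C,\rho,\mu)$ and $(C',\rho',\mu')$ I would form the orthogonal sum $C \oplus C'$ in $\bC$ (available since $\bC$ is additive) with $G$-action $\rho \oplus \rho'$ and measure $(\mu \oplus \mu')(Y) \coloneqq \mu(Y) \oplus \mu'(Y)$. Equivariance, finite additivity, and fullness transport componentwise from the summands, and the support of the sum is $\supp(C,\rho,\mu) \cup \supp(C',\rho',\mu')$, so the local finiteness condition is preserved. The property of being determined on points passes to the direct sum by direct inspection of Definition \ref{rgoigsgsgfgg} or an application of Lemma \ref{eriogjqwefqewfewfeqdewdq} to the two-element partition.

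To obtain the orthogonal sum in the Roe category, I would check that the canonical isometric inclusions $i_{C} \colon C \to C \oplus C'$, $i_{C'} \colon C' \to C \oplus C'$ and their adjoints are $\diag(X)$-controlled morphisms in $\Cgtsm(X)$. This follows from the identity $(\mu \oplus \mu')(Y) \circ i_{C} = i_{C} \circ \mu(Y)$, which gives $(\mu \oplus \mu')(Y') \, i_{C} \, \mu(Y) = i_{C} \, \mu(Y' \cap Y) = 0$ whenever $Y' \cap Y = \emptyset$, and analogously for $i_{C'}$. Since these structure morphisms already lie in the pre-completion $\Cgtsm(X)$, they persist in $\bCgtsm(X)$, exhibiting $(C \oplus C', \rho \oplus \rho', \mu \oplus \mu')$ as the required orthogonal sum. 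The same argument restricts to $\bCgtsmc(X)$ using the checks from the previous paragraph.

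The only point that I expect to require some care is smallness: for $B \in \cB_{X}$ one needs $\mu(B) \oplus \mu'(B) \in \End_{\bK}(C \oplus C')$. I would deduce this from the ideal property of $\bK$ in $\bC$: writing $\mu(B) \oplus 0 = i_{C} \, \mu(B) \, i_{C}^{*}$ and $0 \oplus \mu'(B) = i_{C'} \, \mu'(B) \, i_{C'}^{*}$ realises each summand as a composition of a morphism of $\bK$ with morphisms of $\bC$, hence they lie in $\bK$, and their sum $\mu(B) \oplus \mu'(B)$ then lies in $\bK$ as well. This is the essential input that couples additivity of $\bC$ with the ideal structure on $\bK$.
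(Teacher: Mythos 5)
Your proof is correct and takes essentially the same approach as the paper's own proof: form the componentwise orthogonal sum $(C\oplus C',\rho\oplus\rho',\mu\oplus\mu')$, deduce smallness of the sum from the ideal property of $\bK$ by writing $(\mu\oplus\mu')(B)$ as a combination of the structure isometries with $\mu(B)$ and $\mu'(B)$, and note that local finiteness passes to the sum. You are more explicit than the paper about the zero object, the control of the structure isometries, and the preservation of ``determined on points,'' but these are exactly the routine checks the paper leaves implicit.
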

\begin{proof}
We consider the case of $  \bCgtsm  (X)$.
 The orthogonal sum of two 
objects $(C,\rho,\mu)$ and $(C',\rho',\mu')$ of  $ \bCgtsm  (X)$
is given by 
$(C\oplus C',\rho\oplus \rho',\mu\oplus \mu')$, where $(C\oplus C',(e,e'))$ is an  orthogonal sum of the family $(C,C')$ in $\bM\bC$ which exists by our assumption. In order to see that this sum is small note that $(\mu\oplus \mu')(B)=e\mu(B)e^{*}+e'\mu'(B)e^{\prime,*}$.   Since 
  $\bC$ is an ideal in $\bM\bC$ and $\mu(B)$, $\mu'(B)$ belong to $\bC$ by the smallness of the summands,  we have $(\mu\oplus \mu')(B)\in \End_{\bC}(C\oplus C')$  for every $B$ in $\cB_X$.
  
The argument for   $  \bCgtsmc(X)$ is similar.   One checks in a straightforward manner that if
 both summands are locally finite, then so  is the sum.
\end{proof}

The functors  $ \bCgtsm  $  and $ \bCgtsmc$  are   not coarsely invariant except in degenerate cases.
But the  following lemma shows that they  are coarsely invariant (see Definition \ref{ergiweorgregergwerg}) if we consider them as functors with values  in the $(2,1)$-category $\Ccat_{2,1}$ of unital $C^{*}$-categories, unital functors and unitary isomorphisms. 

\begin{lem}\label{refhfiueareferfdscvadsc}
The functors $ \bCgtsm  $  and $ \bCgtsmc$  send  pairs of  close   morphisms (Definition \ref{wegwerfreweggerg})   to pairs of  unitarily isomorphic morphisms. 
\end{lem}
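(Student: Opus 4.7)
The plan is to produce, for each object $(C,\rho,\mu)$ in $\bCgtsm(X)$ (respectively $\bCgtsmc(X)$), a natural unitary isomorphism
\[
u_{(C,\rho,\mu)} \colon (f_0)_*(C,\rho,\mu) \xrightarrow{\;\sim\;} (f_1)_*(C,\rho,\mu)
\]
in $\bCgtsm(X')$ (respectively $\bCgtsmc(X')$). The natural candidate is simply $u_{(C,\rho,\mu)} \coloneqq \id_C$, viewed as a morphism between $(C,\rho,(f_0)_*\mu)$ and $(C,\rho,(f_1)_*\mu)$; this is plainly unitary and $G$-equivariant in $\bC^G$, and its naturality in $(C,\rho,\mu)$ is immediate since morphisms in $\bCgtsm(X)$ are just morphisms in $\bC^G$. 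Moreover, provided $\id_C$ really lies in the target category, this directly gives the desired natural unitary isomorphism of functors.

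The only substantive point is to verify that $\id_C$ is controlled with respect to the coarse structure of $X'$. Since $f_0$ and $f_1$ are close, there exists $V \in \cC_{X'}$ with $(f_0 \times f_1)(\diag(X)) \subseteq V$; I claim $\id_C$ is $V$-controlled. Let $Y, Y' \subseteq X'$ with $V[Y] \cap Y' = \emptyset$. Using Definition~\ref{erigoj0eorgwergrewgwgergrweg} and finite additivity (Definition~\ref{gwergoiegujowergwergwergwerg}.\ref{riuqhfuiefqwefqef}),
\[
(f_1)_*\mu(Y') \cdot \id_C \cdot (f_0)_*\mu(Y) \;=\; \mu(f_1^{-1}(Y'))\,\mu(f_0^{-1}(Y)) \;=\; \mu\bigl(f_1^{-1}(Y') \cap f_0^{-1}(Y)\bigr).
\]
If $x$ lay in this intersection, then $f_0(x) \in Y$ and $f_1(x) \in Y'$, so $f_1(x) \in V[\{f_0(x)\}] \subseteq V[Y]$, contradicting $V[Y] \cap Y' = \emptyset$. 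Hence the intersection is empty and the expression vanishes, proving that $\id_C$ is $V$-controlled and therefore a morphism in $\Cgtsm(X')$ (hence in its closure $\bCgtsm(X')$).

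For the $\bCgtsmc$ variant, one additionally notes that both $(f_0)_*(C,\rho,\mu)$ and $(f_1)_*(C,\rho,\mu)$ already lie in $\bCgtsmc(X')$ by functoriality (Lemma~\ref{tgoiwgwregwegwegregw}), so the same morphism $\id_C$ works verbatim. Finally, naturality of the collection $\{u_{(C,\rho,\mu)}\}$ with respect to a morphism $A\colon (C,\rho,\mu)\to (C',\rho',\mu')$ amounts to the tautological identity $A\circ \id_C = \id_{C'}\circ A$ in $\bC^G$, which completes the construction.

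The main subtlety, and essentially the only nontrivial point, is the control estimate above; once one sees that closeness of $f_0,f_1$ translates directly into $V$-control of the identity intertwiner via the finite additivity and multiplicativity of $\mu$, the rest of the argument is formal. No obstruction from smallness, local finiteness, or the ideal $\tilde\bK$ arises because $\id_C$ preserves the underlying $G$-object and the images of $(f_i)_*\mu$ on bounded/locally-finite subsets are already guaranteed by Lemmas~\ref{erogwergwergregweg} and~\ref{tgoiwgwregwegwegregw}.
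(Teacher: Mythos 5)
Your proof is correct and follows essentially the same route as the paper: taking $\id_C$ as the unitary intertwiner between $(C,\rho,(f_0)_*\mu)$ and $(C,\rho,(f_1)_*\mu)$, observing it is equivariant and unitary, and noting it is controlled by the entourage arising from closeness of $f_0$ and $f_1$. You spell out the control computation and naturality in more detail than the paper, which merely asserts them, but the idea and the controlling entourage (up to inverting, which is immaterial since coarse structures are closed under inverses) are identical.
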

\begin{proof}  
Let $f,f^{\prime}\colon X\to  Y$ be two morphisms in $G\BC$ which are close to each other.  Then we define a
  unitary isomorphism  of functors 
$$u\colon f_{*}\to f'_{*}\colon  \bCgtsm (X)\to \bCgtsm (Y)\, , \quad u=(u_{(C,\rho,\mu)})_{(C,\rho,\mu)\in \Ob(\bCgtsm(X))}$$
such that  $u_{(C,\rho,\mu)}\colon ( C,\rho,f_{*}\mu)  \to ( C,\rho,f'_{*}\mu)$ is given by the identity  of the object $C$ considered in $\bM\bC$.
We observe that
 this morphism is equivariant, unitary, and controlled by the entourage  $(f^{\prime},f)(\diag(X))$ of $Y$.  Since $f$ and $f'$ are close to each other, this entourage belongs to $\cC_{Y}$  and we can conclude that $u_{(C,\rho,  \mu)}$ is controlled.
 
The argument for $ \bCgtsmc$ is the same.
\end{proof}

\begin{lem}\label{ergiuhwergwergergwegergerg}
The functors
$ \bCgtsm $ and $ \bCgtsmc $   are $u$-continuous (Definition \ref{wetklhgwoelgregweg9}).
\end{lem}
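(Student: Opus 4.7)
The strategy is to observe that passing from $X$ to $X_{U}$ does not change the bornology or the $G$-set, hence it does not alter the objects of the Roe categories but only restricts which morphisms are admitted. We then identify the filtered colimit over $\cC_X^G$ in $\Ccat$ as the $C^*$-closure of the union of morphism spaces, and apply cofinality of $\cC_X^G$ inside $\cC_X$ (Definition \ref{trbertheheht}.\ref{igwoegwergergwrgrg}).

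First I would note that, since smallness (Definition \ref{wqgfuihergiugwefqwfef}) and local finiteness (Definition \ref{rthoperthrtherthergtrgertgertretr}) depend only on the bornology and the $G$-set underlying $X$, and not on $\cC_X$, the classes of objects of $\bCgtsm(X_U)$ (resp.\ $\bCgtsmc(X_U)$) for varying $U\in\cC_X^G$ all coincide with those of $\bCgtsm(X)$ (resp.\ $\bCgtsmc(X)$). The transition maps $\bCgtsm(X_U)\to\bCgtsm(X_{U'})$ for $U\subseteq U'$ in $\cC_X^G$ are the identity on objects and the inclusion on morphism spaces, and likewise for the maps to $\bCgtsm(X)$. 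The indexing poset $\cC_X^G$ is filtered, because finite unions of invariant entourages are invariant entourages by Definition \ref{trbertheheht}.\ref{qerighioergergwgergwergwergwerg}.

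Next, I would use the standard model for filtered colimits in $\Ccat$ of diagrams that are constant on objects and isometric inclusions on morphism spaces: the colimit has the same objects, and its morphism spaces are obtained by taking the union of the morphism spaces (which is a $*$-subalgebroid of the ambient $C^*$-category) and then forming the $C^*$-completion. Such a colimit formula is provided in \cite{cank}. Applied to our situation, this yields a canonical comparison functor
\[
\colim_{U\in\cC_X^G}\bCgtsm(X_U)\longrightarrow \bCgtsm(X)
\]
which is the identity on objects, so it only remains to check that it is fully faithful.

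The remaining step is the morphism-space identification. By construction, $\bCgtsm(X)(C,C')$ is the closure in $\bCgsm(X)(C,C')$ of the space of morphisms that are $V$-controlled for some $V\in\cC_X$, while the colimit morphism space is the closure of $\bigcup_{U\in\cC_X^G}\Cgtsm(X_U)(C,C')$, i.e.\ of the $V$-controlled morphisms for $V\in\cC\langle U\rangle$ and some $U\in\cC_X^G$. Any $V$-controlled morphism with $V\in\cC_X$ is $V'$-controlled for any $V'\supseteq V$, and by Definition \ref{trbertheheht}.\ref{igwoegwergergwrgrg} we can choose $V'\in\cC_X^G$; thus the morphism is $V'$-controlled with $V'\in\cC\langle V'\rangle$. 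Conversely, every morphism appearing in some $\Cgtsm(X_U)(C,C')$ is $W$-controlled for some $W\in\cC\langle U\rangle\subseteq\cC_X$, hence lies in $\Cgtsm(X)(C,C')$. Passing to closures gives equality of the morphism spaces, so the comparison functor is a unitary equivalence. The argument for $\bCgtsmc$ is identical since local finiteness is bornological in nature and is preserved by the same identity-on-objects functors. The most delicate point in the write-up is citing or providing the precise description of filtered colimits in $\Ccat$ in terms of closures of unions of morphism spaces; everything else is a direct application of the definitions.
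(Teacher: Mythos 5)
Your proposal is correct and follows essentially the same approach as the paper: both use the explicit description of the filtered colimit in $\Ccat$ (the paper cites \cite[Lem.~4.6]{crosscat}), observe the transition functors are identity on objects, and use cofinality of $\cC_X^G$ in $\cC_X$ together with density of $\Cgtsm(X)$ in $\bCgtsm(X)$ to identify the morphism spaces.
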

\begin{proof}
We consider the case of $ \bCgtsm $. The argument for  $ \bCgtsmc  $ is exactly the same.

Let $X$ be in $G\BC$.
Then we must show that the canonical morphism
\begin{equation}\label{wblkijobsbfsdfbsdfb}
\colim_{U\in \cC_{X}^{G}} \bCgtsm (X_{U})\to 
 \bCgtsm  (X)
\end{equation}
 is an isomorphism.
   For every entourage $U $ in $\cC_{X}^{G}$  
the canonical morphism 
$$i_{U}\colon \bCgtsm  (X_{U})\to  \bCgtsm (X)$$
  is the identity on the level of objects and an inclusion of a subspace on the level of 
  morphisms.  
   By  \cite[Lem.~4.7]{crosscat} we know that
   the colimit in \eqref{wblkijobsbfsdfbsdfb}  is realized by the wide subcategory of $ \bCgtsm  (X)$ whose morphism spaces  are given by $$
\overline{\bigcup_{U\in \cC_{X}^{G}} \Hom_{  \bCgtsm (X_{U})}((C,\rho,\mu),(C',\rho',\mu'))}\, .$$
Since this space contains the  subspace 
  $  \Hom_{  \Cgtsm (X )}((C,\rho,\mu),(C',\rho',\mu'))$ which is dense in
 $ \Hom_{ \bCgtsm (X)}((C,\rho,\mu),(C',\rho',\mu'))$ by definition, 
  we conclude that  \eqref{wblkijobsbfsdfbsdfb} is an isomorphism.
%
\end{proof}


Recall Definition \ref{erjigoqgfedewdq} 
of the notion of   a flasque $C^{*}$-category, 
and Definition \ref{qriugoqergreqwfqewfqef} of a flasque $G$-bornological coarse spaces.

\begin{lem} \label{ergoijeorgqgrwefqewfqwef} 
If $\bC$ admits countable AV-sums, then 
  the functors  $ \bCgtsm $  and $ \bCgtsmc$ send  flasque $G$-bornological coarse spaces   to flasque $C^{*}$-categories.
 \end{lem}
\begin{proof}  
 We first  consider  the case of $\bCgtsm$.
 Note that   finite AV-sums  in a $C^{*}$-category   are the same thing as sums in its multiplier category. Therefore our assumption on $\bC$ implies that $\bM\bC$     is additive.
It follows   from Lemma \ref{refhfiueadscvadsc} that
 $\bCgtsm (X)$ is additive.
 
 Assume that $X$ is flasque and that flasqueness is implemented by  $f\colon X\to X$.  We define an endofunctor
$S\colon  \bCgtsm (X)\to  \bCgtsm (X)$ as follows:
\begin{enumerate}
\item objects: The functor $S$ sends an object $(C,\rho,\mu)$ of $ \bCgtsm (X) $ to
$$S(C,\rho,\mu) \coloneqq \big(\bigoplus_{  \nat}  C  ,  \oplus_{  \nat}  \rho, \oplus_{n\in \nat} f^{n}_{*}\mu\big)\, .$$
Note that this involves a choice of a representative of an orthogonal AV-sum $(\bigoplus_{  \nat}  C  ,(e_{n})_{n\in \nat})$ which exists by the assumption on $\bC$. We use \cite[Lem. 7.8]{cank} in order to see that 
$ \oplus_{  \nat}  \rho$ and  $\oplus_{n\in \nat} f^{n}_{*}\mu  $ are well-defined.
\item morphisms: The functor $S$ sends a morphism $A\colon (C,\rho,\mu)\to (C',\rho',\mu')$ in $ \bCgtsm (X)$ to $  S(A) \coloneqq \bigoplus_{\nat}A \colon S(C,\rho,\mu)\to S(C',\rho',\mu')$, where we again use 
 \cite[Lem. 7.8]{cank} in order to see that $\oplus_{\nat }A$ is a well-defined multiplier morphism.
\end{enumerate}

 
 It is   easy to see that  $S(C,\rho,\mu)$ satisfies the conditions listed in Definition \ref{gwergoiegujowergwergwergwerg}. 
 In order check that $S(C,\rho,\mu)$ is small (see Definition \ref{wqgfuihergiugwefqwfef}), consider $B$ be in $\cB_{X}$. By Condition~\ref{qriugoqergreqwfqewfqef}.\ref{qrgioqrgfregqfqefqwefqef}  
   there exists an $n_{0}$ in $\nat$ such that $f^{n_{0}+1}(X)\cap B=\emptyset$. We then have
 $$(\oplus_{n\in \nat} f^{n}_{*}\mu)(B)
=\oplus_{n=0}^{n_{0}}   \mu((f^{n})^{-1}(B)) \oplus (\oplus_{n\ge n_{0}+1}0)\, .$$
Note that $\oplus_{n=0}^{n_{0}}   \mu((f^{n})^{-1}(B))$ is a short-hand {notation} for $\sum_{n=0}^{n_{0}} e_{n}\mu((f^{n})^{-1}(B)) e_{n}^{*}$. 
 Since   $(f^{n})^{-1}(B)\in \cB_{X}$
by properness of $f$  and {since} $(C,\rho,\mu)$ is small, we have $\mu((f^{n})^{-1}(B)) \in \End_{\bC}(C)$. Since $\bC$ is an ideal in $\bM\bC$, we have   $e_{n}\mu((f^{n})^{-1}(B)) e_{n}^{*}\in \End_{\bC}(\bigoplus_{\nat }C)$. It follows that $\oplus_{n=0}^{n_{0}}   \mu(f^{-1}(B))\in 
\End_{\bC}(\bigoplus_{\nat }C)$.

We now assume that  $A$ is $U$-controlled. Then one can check similarly as in the proof of Lemma \ref{ewgkowerpgwergergewgw} that
$S(A)$ is $V$-controlled for $V \coloneqq \bigcup_{n\in \nat} (f^{n}\times f^{n})(U)$.  Since $V\in \cC_{X}$ by
Condition \ref{qriugoqergreqwfqewfqef}.\ref{qrgioqrgfregqfqefqwefqef1},  
 we see that
$S(A)$ is controlled.
By continuity of the functor $S$ on morphism spaces, if $A$ can be approximated in norm by invariant controlled morphisms, then $S(A)$ has the same property.

We can adopt the choice for the sum of functors on the left-hand side {in the following equation} such that 
  \begin{equation}\label{veflknwlefvwevwevwev}
\id_{ \bCgtsm (X)}\oplus  (\bCgtsm (f)\circ S) = S\,.
\end{equation}
Again by assumption on $f$ we know that $f$ is close to $\id_{X}$.
From Lemma \ref{refhfiueareferfdscvadsc} we conclude  that $ \bCgtsm (f)$ is unitarily isomorphic to $\id_{ \bCgtsm (X)}$.  
Therefore  $ \bCgtsm (f)\circ S$ is unitarily isomorphic  to $S$, and {hence} $\id_{ \bCgtsm (X)}\oplus S$ is unitarily  isomorphic to $S$.

In the case of  $ \bCgtsmc$  we show by  an inspection of the construction  that the functor $S$ preserves locally finite objects. \end{proof}

We now  start  working towards excisiveness of the functors $\bCgtsm$ and $\bCgtsmc$ which will be  stated  in  a precise way in  Lemma~\ref{eruigzheiugwergwergvwergr}.
 
Let $X$ be in $G\BC$, and let  $Y$ be a $G$-invariant subset of $X$. \begin{ddd}  We   define the   wide subcategory
$\bCgtsm (Y\subseteq X)$ of $\bCgtsm(X)$ with the spaces of morphisms given by 
$$\Hom_{\bCgtsm(Y\subseteq X)}((C,\rho,\mu),(C^{\prime},\rho',\mu')) \coloneqq \mu'(Y) \Hom_{\bCgtsm( X)}((C,\rho,\mu),(C^{\prime},\rho',\mu'))\mu(Y)\, .$$
\end{ddd}
The $C^{*}$-category  $\bCgtsm(Y\subseteq X)$ is unital with the identity $\mu(Y)$ on the object $(C,\rho, \mu)$. But in general  the inclusion $\bCgtsm(Y\subseteq X)\to 
\bCgtsm(X)$ is only a  morphism  in $\nCcat$.
The inclusion $Y\to X$ induces  a  morphism
\begin{equation}\label{qerfjbjkqwefqwefqwef} 
\bCgtsm(Y)\to \bCgtsm(Y\subseteq X)
\end{equation} in $\Ccat$.
It is injective on objects and  fully faithful. 
Similarly we get a  morphism
 \begin{equation}\label{qerfjbjkqwefqwefqwefccc} 
\bCgtsmc(Y)\to\bCgtsmc (Y\subseteq X)\,.
\end{equation}

 \begin{lem} \label{oirjwoeigwrgwerg}
\ \begin{enumerate} \item If $\bM\bC$ is idempotent complete, then \eqref{qerfjbjkqwefqwefqwef} is an  unitary equivalence.
\item The functor  \eqref{qerfjbjkqwefqwefqwefccc} is a  unitary equivalence.
\end{enumerate}
%
\end{lem}
\begin{proof}
We give the argument for   \eqref{qerfjbjkqwefqwefqwef}. The case of \eqref{qerfjbjkqwefqwefqwefccc} is analoguous.
It 
 is similar to a part of the proof of \cite[Prop.~8.82]{buen}.
 Let us denote the functor \eqref{qerfjbjkqwefqwefqwef} by
 $$i\colon \bCgtsm(Y)\to \bCgtsm(Y\subseteq X)\, .$$
 We construct an inverse functor $$q \colon \bCgtsm(Y\subseteq X)\to \bCgtsm(Y)$$ 
  as follows:
  \begin{enumerate}
 \item \label{ergiowegergwregw}objects:  The functor $q$ sends an object $(C,\rho,\mu)$ of   $\bCgtsm(Y\subseteq X)$ to the object  $q(C,\rho,\mu) \coloneqq (C',e^{*}\rho e,e^{*}\mu e)$ in $\bCgtsm(Y)$, where    $(C',e)$ 
 is a choice of a representative  of {the} image of the projection $\mu(Y)$ which exists
since we assume that $\bM\bC$ is idempotent complete.
In the case of  \eqref{qerfjbjkqwefqwefqwefccc} the existence of $(C',e)$ already follows
 from the standing hypothesis of effective additivity.
 \item morphisms: Let $A\colon (C,\rho,\mu) \to (\tilde C,\tilde \rho,\tilde \mu)$ be a morphism in $\bCgtsm(Y\subseteq X)$. Then we define $q(A) \coloneqq \tilde e^{*}Ae\colon (C',e^{*}\rho e,e^{*}\mu e)\to (\tilde C',\tilde e^{*}\tilde \rho \tilde e,\tilde e^{*}\tilde \mu \tilde e)$, where {$(C',e)$ and} $(\tilde C',\tilde e)$ are the choices of the representatives for the images of {$\mu(Y)$ and $\tilde{\mu}(Y)$ made above.}
\end{enumerate}
 Above we have used a shortend notation. More precisely, $e^{*}\rho e$ is the family of morphisms $({(ge)}^{*}\rho_{g} e \colon C'\to g C')_{g\in G}$
 and $e^{*}\mu e$ is the function which sends $Z$ in $\cP_{Y}$ to the projection  $e^{*}\mu(Z)e$ in $\End_{\bM\bC}(C')$.   
 It is straightforward to verify that $q$ is well-defined. 
The definition of $q$ on morphisms extends to $\bCgtsm( X)$,
 but   this might not give a functor since  it  is then not necessarily compatible with the composition of morphisms.

We now construct the unitary isomorphism
$ u\colon q\circ i\to \id$. 
Let $(C,\rho,\mu)$ be in $\bCgtsm(Y)$. Assume that
$(C',e)$ is the chosen representative of  the image of $\mu(Y)$ in the construction of $q$. In this case $\mu(Y)=\id_{C}$ and $e$ is a unitary  isomorphism. We set
$$u_{(C,\rho,\mu)} \coloneqq e \colon (C',e^{*}\rho e,e^{*}\mu e)\to (C,\rho,\mu)\, .$$
It is straightforward to check that $u$ is natural.

Finally, we construct a unitary isomorphism $v \colon i \circ q\to \id$.
 Let $(C,\rho,\mu)$ be in $\bCgtsm(Y\subseteq X)$.
 Let $(C',e)$ be the representative of the image of $\mu(Y)$ 
 in the construction of $q$. Then
 $(i\circ q)(C,\rho,\mu)=(C',e^{*}\rho e,e^{*}\mu e)$. 
 We again set $$v_{(C,\rho,\mu)} \coloneqq e\colon (C',e^{*}\rho e,e^{*}\mu e)\to (C,\rho,\mu)$$ noting that this map is a unitary  isomorphism in $\bCgtsm(Y\subseteq X)$ with inverse $e^{*}$. It is straightforward to check that $v$ is natural.
 \end{proof}

Let $\cY=(Y_{i})_{i\in I}$ be an equivariant  big family in $X$ (see Definition \ref{gqrfqwerfqfwefwefqwf}).  
Using that $I$ is filtered, it is easy to see that we have  a  wide, possibly non-unital $\C$-linear subcategory 
$ \Cgtsm(\cY\subseteq X)$ of $\bCgtsm( X)$    with the morphism spaces \begin{equation}\label{vwvsdvasdvdsva}
\Hom_{ \Cgtsm(\cY\subseteq X) }((C,\rho,\mu),(C^{\prime},\rho',\mu')) \coloneqq \bigcup_{i\in I}\Hom_{\bCgtsm (Y_{i}\subseteq X)}((C,\rho,\mu),(C^{\prime},\rho',\mu')) \, .
\end{equation} 
 
   \begin{ddd} We define the subcategory 
 $\bCgtsm (\cY\subseteq X)$ of $ \bCgtsm( X)$ to  be the closure of  
   $  \Cgtsm(\cY\subseteq X)$  with respect to the norm induced from $ \bCgtsm(X)$.
\end{ddd} 
   
 By construction we have a canonical morphism
 $$  \bCgtsm(Y_{(-)}\subseteq X)\to \underline{\bCgtsm (\cY\subseteq X) }$$
 in $\Fun(I,\nCcat)$, where $\underline{\smash{-}}$ stands for forming the constant $I$-diagram. By adjunction
 it induces a functor
 \begin{equation}\label{oijoiervevw}
 \colim_{i\in I}\bCgtsm(Y_{i}\subseteq X)\to\bCgtsm(\cY\subseteq X) 
\end{equation}
in $\nCcat$.
By replacing  the subscript $\mathrm{sm}$ by $\mathrm{lf}$ in the preceeding discussion we get a  functor
\begin{equation}\label{oijoiervevwccc}
\colim_{i\in I}\bCgtsmc (Y_{i}\subseteq X)\to \bCgtsmc (\cY\subseteq X) \, .
\end{equation}

\begin{lem}\label{erguieqrogerwgregwergeg}
 The functors  in   \eqref{oijoiervevw} and \eqref{oijoiervevwccc}  are    isomorphisms. 
\end{lem}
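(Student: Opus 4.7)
The plan is to follow the strategy used in the proof of Lemma \ref{ergiuhwergwergergwegergerg} for $u$-continuity, invoking the description of filtered colimits in $\nCcat$ provided by \cite[Lem.~4.6]{crosscat}.

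First I would check that the transition functors of the diagram $I\to \nCcat$, $i\mapsto \bCgtsm(Y_{i}\subseteq X)$, are identity on objects and inclusions of subspaces on morphism spaces. Identity on objects is immediate since each $\bCgtsm(Y_{i}\subseteq X)$ is a wide subcategory of the common ambient category $\bCgtsm(X)$. For the morphism spaces, whenever $i\le i'$ in $I$ we have $Y_{i}\subseteq Y_{i'}$, and by the finite additivity from Definition \ref{gwergoiegujowergwergwergwerg}.\ref{riuqhfuiefqwefqef} this yields $\mu(Y_{i'})\mu(Y_{i})=\mu(Y_{i})$ and $\mu'(Y_{i'})\mu'(Y_{i})=\mu'(Y_{i})$. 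Consequently any morphism of the form $\mu'(Y_{i})A\mu(Y_{i})$ equals $\mu'(Y_{i'})\bigl(\mu'(Y_{i})A\mu(Y_{i})\bigr)\mu(Y_{i'})$, which exhibits it as an element of $\Hom_{\bCgtsm(Y_{i'}\subseteq X)}((C,\rho,\mu),(C',\rho',\mu'))$.

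Next I would apply \cite[Lem.~4.6]{crosscat} to realize the colimit $\colim_{i\in I}\bCgtsm(Y_{i}\subseteq X)$ as the wide subcategory of $\bCgtsm(X)$ whose morphism spaces are
$$\overline{\bigcup_{i\in I}\Hom_{\bCgtsm(Y_{i}\subseteq X)}((C,\rho,\mu),(C',\rho',\mu'))}\, .$$
Comparing with the description \eqref{vwvsdvasdvdsva} of morphism spaces in $\Cgtsm(\cY\subseteq X)$ together with the definition of $\bCgtsm(\cY\subseteq X)$ as the norm closure of $\Cgtsm(\cY\subseteq X)$, this space is precisely $\Hom_{\bCgtsm(\cY\subseteq X)}((C,\rho,\mu),(C',\rho',\mu'))$. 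The canonical functor \eqref{oijoiervevw} is thus identity on objects and bijective on morphism spaces, and hence an isomorphism in $\nCcat$.

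The argument for \eqref{oijoiervevwccc} is identical, since passing from $\bCgtsm$ to $\bCgtsmc$ only restricts the object set by the locally-finite condition, which is independent of $i$, while leaving the morphism spaces and the subspace-inclusion property of the transition functors unchanged. I do not anticipate a substantive obstacle: the constructions have been set up so that the identification is essentially tautological once the filtered-colimit description from \cite{crosscat} is in hand, the only content being the matching of the two norm closures.
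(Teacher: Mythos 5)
Your proposal takes essentially the same route as the paper: the paper's own proof also observes that the canonical functors $\bCgtsm(Y_i\subseteq X)\to\bCgtsm(X)$ are identity on objects and injective on morphism spaces, and then invokes \cite[Lem.~4.6]{crosscat} to identify the colimit with the norm closure of the union, which by construction is exactly $\bCgtsm(\cY\subseteq X)$. Your extra verification of the transition inclusions via finite additivity of $\mu$ is a fine elaboration of a step the paper leaves implicit, but the argument is the same.
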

\begin{proof}
We consider 
 the case of \eqref{oijoiervevw}. The case of \eqref{oijoiervevwccc} is analoguous.

  For all $i$ in $I$  
the canonical functor 
$\bCgtsm (Y_{i} \subseteq X)\to 
\bCgtsm (X)$
 is the  identity on the level of objects and it is injective on the spaces of morphisms. The assertion now follows from \cite[Lem.\ 4.7]{crosscat} which provides an explicit description of colimits of this type. 
\end{proof}
 
Recall the Definition \ref{rwgiojworgergrwegrefr} 
of an ideal inclusion in $\nCcat$.

 \begin{lem}\label{ewrgiowergrwegergwreg}
The functors
$$\bCgtsm(\cY\subseteq X)\to \bCgtsm(X) \, , \quad
\bCgtsmc(\cY\subseteq X)\to \bCgtsmc(X)$$
 are inclusions of  ideals.
\end{lem}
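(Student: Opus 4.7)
The plan is to verify the three conditions defining an ideal inclusion in $\nCcat$ in the sense of \cite[Defn.~11.1]{cank}: that the functor is the identity on objects, that it is faithful, and that the image satisfies the two-sided ideal property under composition with arbitrary morphisms of the ambient category (plus being a $*$-subcategory). The first two properties are built into the construction: $\Cgtsm(\cY\subseteq X)$ is defined as a wide subcategory of $\bCgtsm(X)$, and $\bCgtsm(\cY\subseteq X)$ is its objectwise norm closure inside $\bCgtsm(X)$, so both functors are the identity on objects and are isometric inclusions of closed subspaces on morphism spaces. Closure under the involution is immediate from $(\mu'(Y_i) A\mu(Y_i))^{*}=\mu(Y_i)A^{*}\mu'(Y_i)$.

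The content is the ideal property. Since composition is jointly continuous and the morphism spaces of $\bCgtsm(\cY\subseteq X)$ are norm-closed, it is enough to check the ideal property on the dense subcategories $\Cgtsm(\cY\subseteq X)\subseteq\Cgtsm(X)$. Concretely, I would fix an index $i\in I$, a morphism $A=\mu'(Y_i)A_0\mu(Y_i)\in\Hom_{\bCgtsm(Y_i\subseteq X)}((C,\rho,\mu),(C',\rho',\mu'))$, and a $V$-controlled morphism $B\colon(C',\rho',\mu')\to(C'',\rho'',\mu'')$ in $\Cgtsm(X)$ with $V\in\cC_X$, and show $B\circ A$ lies in $\Hom_{\bCgtsm(Y_j\subseteq X)}$ for a suitable $j$. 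Using the big family condition \ref{gqrfqwerfqfwefwefqwf}.\ref{weoigjwoegergergergwerg}, choose $j\in I$ with $j\ge i$ and $V[Y_i]\cup V^{-1}[Y_i]\subseteq Y_j$. Then $X\setminus Y_j$ is $V$-separated from $Y_i$, so the control hypothesis on $B$ gives $\mu''(X\setminus Y_j)B\mu'(Y_i)=0$, hence $\mu''(Y_j)B\mu'(Y_i)=B\mu'(Y_i)$ by finite additivity of $\mu''$. Combined with $A\mu(Y_j)=A$ (from $Y_i\subseteq Y_j$ and $A=A\mu(Y_i)$), this yields
$$BA=\mu''(Y_j)(BA)\mu(Y_j)\in\Hom_{\bCgtsm(Y_j\subseteq X)}((C,\rho,\mu),(C'',\rho'',\mu''))\,.$$
The argument for composition on the other side is symmetric, using $V^{-1}$-separation and the choice of $j$ dominating $V^{-1}[Y_i]$.

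The case of $\bCgtsmc$ requires no additional work: all operations performed above preserve the subcategory of locally finite objects (already needed in Lemma~\ref{tgoiwgwregwegwegregw}), so the same computation lives inside $\bCgtsmc(X)$ and exhibits $BA$ as an element of $\bCgtsmc(\cY\subseteq X)$. The main obstacle in the proof is simply the bookkeeping around the filtered poset $I$: one must choose a single index $j$ that simultaneously dominates the index $i$ of $A$ and absorbs the propagation $V[Y_i]$ (and $V^{-1}[Y_i]$) of the control entourage of $B$. Once this $j$ is fixed, the ideal identity $BA=\mu''(Y_j)(BA)\mu(Y_j)$ drops out from the $V$-control condition \eqref{qwfewfewqfewedqewd1rr} applied to $B$ together with finite additivity and fullness of the $\mu$'s.
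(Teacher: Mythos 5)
Your proof is correct and follows essentially the same route as the paper's: reduce by closure/density of the morphism spaces to the case where the arbitrary factor is a genuinely controlled morphism, then invoke the big family condition \ref{gqrfqwerfqfwefwefqwf}.\ref{weoigjwoegergergergwerg} to find an index $j$ absorbing the propagation of the entourage, so that the composite lands in $\Hom_{\bCgtsm(Y_j\subseteq X)}$. You spell out the small computation $\mu''(Y_j)B\mu'(Y_i)=B\mu'(Y_i)$ (via $V$-separation of $X\setminus Y_j$ from $Y_i$ and fullness of $\mu''$) which the paper leaves implicit, and you redundantly check both sides of the ideal condition even though — as you yourself observe — closure under the involution makes one side sufficient; the paper explicitly invokes that reduction and checks only one composition.
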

\begin{proof} 
We consider 
the case with subcript $\mathrm{sm}$. The other {case} is analogous.

By construction  the subcategory 
 $\bCgtsm(\cY\subseteq X)$ of $ \bCgtsm(X)$ is wide and its morphism spaces are closed subspaces. 
It remains to show that it is an ideal.    
Because of the presence of the involution it suffices to verify the condition of a left ideal.

  Let $i$ be in $I$, $A$ be $U$-controlled  in $\Cgtsm (X) $, and
$B$ be  in $\bCgtsm (Y_{i}\subseteq X) $ such that $A$ and $ B$ are composable. 
Then $A\circ B$ is   in $ \bCgtsm (Y_{i'}\subseteq X) $, where $i'$ in $I$ is such that $i\le i'$ and
$U[Y_{i}]\subseteq Y_{i'}$. Such an element $i'$ exists since the family $\cY$  is assumed to be   big, see Condition \ref{gqrfqwerfqfwefwefqwf}.\ref{weoigjwoegergergergwerg}.

Since $\Cgtsm (X)$ is dense in $\bCgtsm(X)$,  
$\Cgtsm (\cY \subseteq X)$ is dense in $\bCgtsm (\cY \subseteq X)$, and  
 $ \bCgtsm(\cY\subseteq X)$ is closed  in  $\bCgtsm(X)$ 
 we see that
 if  $A$  in $\bCgtsm(X)$ and
$B$  in $\bCgtsm(\cY\subseteq X) $ are composable, then 
$A\circ B\in  \bCgtsm (\cY \subseteq X) $.
\end{proof}

%
%


Let $X$ be in $G\BC$, and let
  $(\cY,Z)$ be an equivariant  complementary pair on $X$ (see Definition \ref{eqrgklefqwewefqeff}). 
 Recall the Definition \ref{wetogjowgrefwerfwrfw} of an excisive square of $C^{*}$-categories.
\begin{lem} \label{eruigzheiugwergwergvwergr}  In the case of  $\bCgtsm$ we will in addition  assume that $\bM\bC$ is  idempotent complete.   The squares
$$\xymatrix{ \bCgtsm((Z\cap \cY)\subseteq Z)\ar[r]\ar[d]& \bCgtsm(Z )\ar[d]\\
 \bCgtsm(\cY\subseteq X)\ar[r]& \bCgtsm(X)} \qquad \xymatrix{\bCgtsmc((Z\cap \cY)\subseteq Z)\ar[r]\ar[d]&\bCgtsmc(Z) \ar[d]\\
\bCgtsmc(\cY\subseteq X)\ar[r]&\bCgtsmc(X)}$$
are excisive.
\end{lem}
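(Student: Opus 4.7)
The plan is to verify the conditions of an excisive square from \cite[Def.~11.2]{cank}. Both squares are treated in parallel, with the second case requiring only that the objects and morphisms constructed below retain the locally-finite-support condition --- which they do, being formed from restrictions and sub-projections of locally finite data. Commutativity of the squares and well-definedness of the vertical functors follow from the functoriality of our constructions applied to the inclusion $Z \hookrightarrow X$, which is a morphism in $G\BC$ when $Z$ carries the induced bornological coarse structure (Example~\ref{wetgwetgrefrefwfrefw}). By Lemma~\ref{ewrgiowergrwegergwreg} both horizontal maps are ideal inclusions. Invoking Lemma~\ref{oirjwoeigwrgwerg} (which uses idempotent completeness of $\bC$) we identify $\bCgtsm(Z) \simeq \bCgtsm(Z \subseteq X)$ and $\bCgtsm((Z \cap \cY) \subseteq Z) \simeq \bCgtsm((Z \cap \cY) \subseteq X)$, so that it suffices to establish, inside $\bCgtsm(X)$, the two identities
\[
\bCgtsm(Z \subseteq X) + \bCgtsm(\cY \subseteq X) = \bCgtsm(X), \qquad \bCgtsm(Z \subseteq X) \cap \bCgtsm(\cY \subseteq X) = \bCgtsm((Z \cap \cY) \subseteq X).
\]

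For the sum identity, fix $i_0 \in I$ with $Y_{i_0} \cup Z = X$, so $X \setminus Z \subseteq Y_{i_0}$. For a $V$-controlled morphism $A\colon (C,\rho,\mu) \to (C',\rho',\mu')$ in $\Cgtsm(X)$, finite additivity of $\mu$ and $\mu'$ yields the decomposition
\[
A = \mu'(Z)A\mu(Z) + \mu'(Z)A\mu(X\setminus Z) + \mu'(X\setminus Z)A\mu(Z) + \mu'(X\setminus Z)A\mu(X\setminus Z).
\]
The first summand sits in $\Cgtsm(Z \subseteq X)$ and the fourth in $\Cgtsm(Y_{i_0}\subseteq X)$. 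For the off-diagonal term $\mu'(X\setminus Z)A\mu(Z)$, the $V$-controlled property forces its right-hand projection to be supported in $Z \cap V[X \setminus Z] \subseteq V[Y_{i_0}]$; the bigness of $\cY$ (Condition~\ref{gqrfqwerfqfwefwefqwf}.\ref{weoigjwoegergergergwerg}) furnishes $i' \geq i_0$ with $V[Y_{i_0}] \subseteq Y_{i'}$, placing this term in $\Cgtsm(Y_{i'} \subseteq X)$. The other off-diagonal term is handled symmetrically. Taking norm closures and using that the sum of two closed two-sided ideals in a $C^*$-category is again closed yields the sum identity. The intersection identity is direct: if $A \in \bCgtsm(Z \subseteq X) \cap \bCgtsm(Y_i \subseteq X)$, then $A = \mu'(Z)A\mu(Z) = \mu'(Y_i)A\mu(Y_i) = \mu'(Z \cap Y_i)A\mu(Z \cap Y_i) \in \bCgtsm((Z \cap Y_i)\subseteq X)$, and taking the union over $i$ gives the claim.

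The main obstacle is the refinement needed for the off-diagonal terms: a naive decomposition of $A$ via the projections $\mu(Z), \mu(X\setminus Z)$ produces terms like $\mu'(X\setminus Z)A\mu(Z)$ that are not manifestly sandwiched between $\mu(Y_{i_0})$ on both sides --- the trick is to exploit the control entourage $V$ of $A$ together with the bigness of $\cY$ to pass to a larger index $i'$. This is also what forces us to close $\bCgtsm(\cY \subseteq X)$ after taking the union over all $i \in I$, rather than using a single $Y_{i_0}$. A secondary, routine check is that the object-level identification in Lemma~\ref{oirjwoeigwrgwerg} respects both smallness and local finiteness (for the second square), so that the whole argument runs verbatim for $\bCgtsmc$.
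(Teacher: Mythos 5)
Your approach is genuinely different from the paper's. The paper proves excisiveness directly from its definition: after observing that the horizontal maps are ideal inclusions, it constructs an explicit inverse functor $\bar p$ to the induced map $\bar i$ between the quotient categories $\bCgtsm(Z)/\bCgtsm((Z\cap\cY)\subseteq Z) \to \bCgtsm(X)/\bCgtsm(\cY\subseteq X)$, choosing images of the projection $\mu(Z)$ (this is where idempotent completeness enters), and verifies the two unitary natural isomorphisms. You instead transport the whole square into $\bCgtsm(X)$ via Lemma~\ref{oirjwoeigwrgwerg} and argue by the additivity and modularity of the ideals there. Your sum and intersection arguments are correct --- the $V$-controlled off-diagonal terms are handled precisely as needed, and the intersection identity follows from commutation of the projections. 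So the route is workable.

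However, there is a real gap at the step ``so that it suffices to establish, inside $\bCgtsm(X)$, the two identities.'' The notion of excisive square in \cite[Def.~11.2]{cank} is the one the paper verifies: the horizontal maps are ideal inclusions and the induced map of unital quotient categories is a unitary equivalence. Your two identities give exactly that the quotient functor is bijective on morphism classes (surjectivity from the sum identity, injectivity from the intersection identity), but that is only full faithfulness. You never address essential surjectivity, nor the point that after your reduction the right vertical functor is no longer unital. Both can be repaired: after transporting via Lemma~\ref{oirjwoeigwrgwerg} the four categories share the same object set, so the quotient functor is bijective on objects, and one checks unitality from $\mu(X\setminus Z)\in\bCgtsm(Y_{i_0}\subseteq X)$ (so $[\mu(Z)]=[\mathrm{id}]$ in the quotient). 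But as written, the logical bridge from your two identities to ``the square is excisive'' is asserted, not established. A minor secondary point: you invoke ``the sum of two closed two-sided ideals is closed'', but $\bCgtsm(Z\subseteq X)$ is a closed wide $*$-subcategory, not an ideal; the correct fact is that the sum of a closed $*$-subalgebra and a closed ideal is closed --- or, more simply, you do not need this at all, since each of the four terms in your decomposition depends continuously on $A$, so the decomposition passes directly to norm limits.
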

\begin{proof} 
 We   consider the case with subcript $\mathrm{sm}$.  
 The other case is analogous.

The horizontal functors are  inclusions of ideals by
 Lemma \ref{ewrgiowergrwegergwreg}. The right vertical functor  is a unital morphism between unital $C^{*}$-categories, and hence the quotients of the horizontal functors are unital and the induced functor  $$\bar i\colon  \bCgtsm (Z)/ \bCgtsm((Z\cap \cY)\subseteq Z)\to  \bCgtsm(X)/
   \bCgtsm(\cY\subseteq X)$$ between these quotients is a unital functor.
 It remains to verify that this functor
  is a unitary equivalence. We define a functor 
   $$\bar p\colon \bCgtsm(X)/
   \bCgtsm(\cY\subseteq X)\to  \bCgtsm (Z)/ \bCgtsm((Z\cap \cY)\subseteq Z)$$ as follows:
   \begin{enumerate} 
   \item  \label{wrtiohwgergwergweg}
   Let $(C,\rho,\mu)$ be an object of  $\bCgtsm(X)/
   \bCgtsm(\cY\subseteq X)$, i.e., an object of $\bCgtsm(X)$.
   Then we choose a representative $(C^{\prime},e)$ of the image of $\mu(Z)$. We define $\bar p(C,\rho,\mu) \coloneqq (C',e^{*}\rho e,e^{*}\mu e)$ 
 considered as an object of   $ \bCgtsm(Z)/ \bCgtsm((Z\cap \cY)\subseteq Z)$. It is here where use the assumption on $\bM\bC$. In the case of $\mathrm{lf}$ the existence of this image already follows from the effective additivity from Assumption \ref{ogjkporgwegregrefwf}.

   \item morphisms:
If $[A]$ is a morphism in $ \bCgtsm(X)/
   \bCgtsm(\cY\subseteq X)$ with representative  $A \colon (C,\rho,\mu)\to (\tilde C,\tilde \rho,\tilde \mu)$,   then we define
\[
\bar p([A]) \coloneqq [\tilde e^{*}A e] \colon (C',e^{*}\rho e,e^{*}\mu e)\to (\tilde C',\tilde e^{*}\tilde \rho \tilde e,\tilde e^{*}\tilde \mu \tilde e)\,.
\]
Here $(\tilde C',\tilde e)$ is the choice for
    $(\tilde C,\tilde \rho,\tilde \mu)$ made in \ref{wrtiohwgergwergweg}.
\end{enumerate}
    
  We first check that $\bar p$ is well-defined. Note that 
   $ \mu(Y)e=\mu(Y\cap Z)e=ee^{*}\mu(Y\cap Z)e$
   for $e$ as in the construction of $\bar p$ and any subset $Y$ of $X$.
   For $i$ in $I$ the endomorphism   $e^{*}\mu(Y_{i}\cap Z)e$ of the object $(C',e^{*}\rho e,e^{*}\mu e)$  in $\bCgtsm(Z)$ belongs to the ideal
   $\bCgtsm((Z\cap \cY)\subseteq Z)$.
  
  Assume that
  $A \colon (C,\rho,\mu)\to (\tilde C,\tilde \rho,\tilde \mu)$
   is in $ \bCgtsm(Y_{i}\subseteq X)$ for some $i$ in $I$. 
     Then we have the equalities
  $$\tilde e^{*}A e= \tilde e^{*} A  \mu(Y_{i}) e=   (\tilde e^{*} Ae) (e^{*}  \mu(Z\cap Y_{i}) e)\, .$$ This morphism belongs  to $\bCgtsm((Z\cap \cY)\subseteq Z)$.
  
  Since the ideal is closed, we conclude that
  $\tilde e^{*}A e$ belongs to $\bCgtsm((Z\cap \cY)\subseteq Z)$
  for all $A \colon (C,\rho,\mu)\to (\tilde C,\tilde \rho,\tilde \mu)$
 in $  \bCgtsm(\cY\subseteq X)$.

  We now show that $\bar p$ is compatible with compositions.
  Let  $A \colon (C,\rho,\mu)\to (\tilde C,\tilde \rho,\tilde \mu)$     and $B \colon (\tilde C,\tilde \rho,\tilde \mu) \to (\tilde{\tilde C},\tilde{\tilde \rho},\tilde{\tilde \mu})$ be morphisms  in $ \bCgtsm( X)$.  For the moment we assume that $A$ is $U^{-1}$-controlled for some entourage $U$ in $\cC_{X}$.  
 There exists $i$ in $I$ such that $X\setminus Z\subseteq Y_{i}$. Using that $\cY$ is big we can choose $i'$ in $I$ such that   $U[Y_{i}]\subseteq Y_{i'}$.    We have the equality 
  $$\tilde{\tilde e}^{*}BAe-\tilde{\tilde e}^{*}B\tilde e \tilde e^{*}Ae=
  \tilde{\tilde e}^{*}B\tilde  \mu(X\setminus Z) Ae  
  =(
  \tilde{\tilde e}^{*}B\tilde  \mu(X\setminus Z) Ae)(e^{*}\mu(Y_{i'}\cap Z)e)
   \, .$$
  The right-hand side belongs to the ideal $\bCgtsm((Z\cap \cY\subseteq Z)$. 
  Again, since this ideal is closed we can conclude that 
  $\tilde{\tilde e}^{*}BAe-\tilde{\tilde e}^{*}B\tilde e \tilde e^{*}Ae$ belongs to $\bCgtsm((Z\cap \cY\subseteq Z)$  {for} all
   $A\colon (C,\rho,\mu)\to (\tilde C,\tilde \rho,\tilde \mu)$     and $B\colon (\tilde C,\tilde \rho,\tilde \mu) \to (\tilde{\tilde C},\tilde{\tilde \rho},\tilde{\tilde \mu})$  in $ \bCgtsm( X)$.

%
  
  We now construct an isomorphism $u \colon \bar i\circ \bar p\to \id$.
 Let $(C,\rho,\mu)$ be an object of the quotient  $\bCgtsm(X)/
   \bCgtsm(\cY\subseteq X)$, i.e., an  object of $\bCgtsm(X)$.
   Then $$u_{(C,\rho,\mu)} \coloneqq [e] \colon  (C',e^{*}\rho e,e^{*}\mu e)\to (C,\rho,\mu)   $$
 is a unitary isomorphism in $ \bCgtsm(X)/
   \bCgtsm(\cY\subseteq X)$   with inverse $[e^{*}]$.
 It is straightforward to check that $u$ is natural.

 Finally, we construct the {unitary} isomorphism $v \colon \bar p\circ \bar i\to \id$.
 Let $(C,\rho,\mu)$ be an object of $\bCgtsm(Z)/ \bCgtsm((Z\cap \cY)\subseteq Z)$, i.e., an object of $\bCgtsm(Z)$ . Then $$v_{(C,\rho,\mu)} \coloneqq [e] \colon  (C',e^{*}\rho e,e^{*}\mu e)\to  (C,\rho,\mu) $$
 is an isomorphism in   $\bCgtsm(Z)/ \bCgtsm((Z\cap \cY)\subseteq Z)$. It is straightforward to check that $v$ is natural.
\end{proof}

In contrast to $\bCgtsm$ the functor $\bCgtsmc$ has an additional property:
\begin{lem}\label{ergiowergerergegwgre}
The functor $\bCgtsmc$ is continuous (Definition \ref{weifhqewiefjeefqefwefqwef}).
\end{lem}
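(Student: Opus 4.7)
The plan is to exhibit $\bCgtsmc(X)$ as the union of the subcategories $\bCgtsmc(L)$ for $L\in \LF(X)$, using that every object of $\bCgtsmc(X)$ is already supported on its own locally finite $G$-invariant support.

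First I would check that $\LF(X)$ is a filtered poset: for $L_{1},L_{2}$ in $\LF(X)$ the union $L_{1}\cup L_{2}$ is $G$-invariant and satisfies $|(L_{1}\cup L_{2})\cap B|\le |L_{1}\cap B|+|L_{2}\cap B|<\infty$ for every $B$ in $\cB_{X}$, hence it belongs to $\LF(X)$. For each $L$ in $\LF(X)$ I equip $L$ with the bornological coarse structure induced from $X$ (Example \ref{wetgwetgrefrefwfrefw}), so that the inclusion $i_{L}\colon L\to X$ is a morphism in $G\BC$. Functoriality then gives $(i_{L})_{*}\colon \bCgtsmc(L)\to \bCgtsmc(X)$, assembling into a diagram over $\LF(X)$ with a canonical comparison morphism $\iota\colon \colim_{L\in \LF(X)}\bCgtsmc(L)\to \bCgtsmc(X)$.

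The main step is to identify each $(i_{L})_{*}$ with a full embedding onto the subcategory $\bCgtsmc(X)_{L}$ of $\bCgtsmc(X)$ on objects whose support lies in $L$. On objects: if $(C,\rho,\mu)$ is in $\bCgtsmc(X)_{L}$, then since $(C,\rho,\mu)$ is determined on points and $\mu(\{x\})=0$ for $x\notin L$, Lemma \ref{eriogjqwefqewfewfeqdewdq} applied to the partition $\{L,X\setminus L\}$ yields $\mu(X\setminus L)=0$ and hence $\mu(L)=\id_{C}$ and $\mu(Y)=\mu(Y\cap L)$ for every $Y\subseteq X$; so the restriction $(C,\rho,\mu|_{L})$ is an object of $\bCgtsmc(L)$ that is carried by $(i_{L})_{*}$ to $(C,\rho,\mu)$. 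On morphisms between two objects supported in $L$, I would show equality of the spaces $\Hom_{\bCgtsmc(L)}$ and $\Hom_{\bCgtsmc(X)_{L}}$: a $U$-controlled morphism in $\Cgtsm(X)$ with $U\in\cC_{X}$ is automatically $U\cap(L\times L)$-controlled in $\Cgtsm(L)$ because for $Y,Y'\subseteq L$ with $(U\cap(L\times L))[Y]\cap Y'=\emptyset$ one has $U[Y]\cap Y'=\emptyset$; conversely $V$-controlled in $L$ for $V\in\cC_{L}$ gives control with respect to $(i_{L}\times i_{L})(V)\in\cC_{X}$. Passing to closures in the common ambient space $\Hom_{\bC^{G}}((C,\rho),(C',\rho'))$ yields the corresponding equality for $\bCgtsmc$.

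With this in hand I would invoke \cite[Lem.\ 4.6]{crosscat} to realise $\colim_{L\in \LF(X)}\bCgtsmc(L)$ as the wide subcategory of $\bCgtsmc(X)$ whose objects are $\bigcup_{L\in \LF(X)}(i_{L})_{*}\Ob(\bCgtsmc(L))$ and whose morphism spaces are the closures of the corresponding unions. Every object $(C,\rho,\mu)$ of $\bCgtsmc(X)$ lies in this subcategory because $\supp(C,\rho,\mu)\in \LF(X)$ by Definition \ref{rthoperthrtherthergtrgertgertretr}, and every morphism $A\colon (C,\rho,\mu)\to(C',\rho',\mu')$ in $\bCgtsmc(X)$ is a morphism in $\bCgtsmc(L)$ for $L\coloneqq \supp(C,\rho,\mu)\cup \supp(C',\rho',\mu')\in\LF(X)$. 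Thus $\iota$ is an isomorphism, and in particular the colimit exists, as required. The main obstacle is the careful identification of morphism spaces under restriction to $L$, i.e.\ the compatibility between $U$-control in $X$ and $U\cap(L\times L)$-control in $L$; the object-level argument is essentially forced by the fact that objects of $\bCgtsmc(X)$ are determined on points with locally finite support.
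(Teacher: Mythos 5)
Your proof is correct and follows essentially the same route as the paper: realize the filtered colimit of the full subcategory inclusions $\bCgtsmc(L)\hookrightarrow\bCgtsmc(X)$ as their union inside $\bCgtsmc(X)$, then use that every object has locally finite support (so lies in some $\bCgtsmc(L)$) and that every morphism already lies in $\bCgtsmc(L)$ for $L$ the union of the two supports. One small correction: since the transition functors here are inclusions of \emph{full} (not wide) subcategories, the auxiliary result needed is \cite[Lem.\ 4.5]{crosscat} rather than \cite[Lem.\ 4.6]{crosscat} (the latter is what the paper uses for the wide-subcategory situation arising in $u$-continuity), and accordingly the phrase ``wide subcategory'' in your final step should read ``full subcategory.''
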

\begin{proof}
Let $X$ be in $G\BC$ and 
let $\mathrm{LF}(X)$ be the poset of  invariant, locally finite subsets of $X$. 
 We must show that the canonical functor
 $$\colim_{L\in \mathrm{LF}(X)}\bCgtsmc(L)\to \bCgtsmc(X)$$
 is an isomorphism.

For every 
 $L$ in $  \mathrm{LF}(X)$ the inclusion $L\to X$ induces an inclusion $\bCgtsmc(L)\to \bCgtsmc(X)$
of full subcategories. By \cite[Lem.~4.7]{crosscat}  the colimit is given by the   union of these subcategories (note that this union is already a closed subcategory). 
 If $(C,\rho,\mu)$ is an object of $ \bCgtsmc(X)$, then $L \coloneqq \supp(C,\rho,\mu)$ belongs to $\mathrm{LF}(X)$.
 Consequently, $(C,\rho,\mu)$ belongs to the image of   $\bCgtsmc(L)\to \bCgtsmc(X)$.
 \end{proof}


  \section{The coarse homology theories \texorpdfstring{$\Homol\bC\cX^{G}$}{HgCXG} and \texorpdfstring{$\Homol\bC\cX_{c}^{G}$}{HgCXGc}}\label{qergiuhquifewdqewdewdqwedqewd}
 
 In this section we define the two functors $\Homol \bC\cX^{G}$ and $\Homol \bC\cX_{c}^{G}$ and show that they are coarse homology theories.
 
 Let $G$ be a group and   fix $  \bC$ in $\Fun(BG, \nCcat)$. As before we adopt Assumption \ref{ogjkporgwegregrefwf}. 
 Then by Definition \ref{eihioqwefqwfewfqwefqewf} we have the functor and subfunctor 
$$\bCgtsm \colon G\BC\to \Ccat\,, \quad \bCgtsmc \colon G\BC\to \Ccat\, .$$
Let $\bM$ be an   $\infty$-category, and  
let $\Homol\colon \nCcat\to \bM$ be a functor.
The following definition introduces some notation.
 \begin{ddd} \label{qwregojwoergwergerwffwfwerfwref}\mbox{}
 \begin{enumerate}
 \item 
  We define the  functor
 $$ \Homol\bC\cX^{G}\colon G\BC\to \bM$$ as the composition  
 $$ G\BC\xrightarrow{\bCgtsm} \Ccat\xrightarrow{\Homol} \bM\, .$$  
 \item \label{rgoijweorgwerferwfwerfwerfw} 
 We define the  functor
 $$\Homol\bC\cX_{c}^{G} \colon G\BC\to \bM$$ as the composition  
 $$ G\BC\xrightarrow{\bCgtsmc} \Ccat\xrightarrow{\Homol} \bM\, .$$  \end{enumerate}
 \end{ddd}

  The subscript $c$ in \ref{qwregojwoergwergerwffwfwerfwref}.\ref{rgoijweorgwerferwfwerfwerfw} indicates continuity, see Theorem  \ref{ergoiegwergewrgwergergwer}. 
  The superscript $G$ indicates that the construction uses $G$-objects in a $C^{*}$-category
with strict $G$-action. This should not be confused with the symbol $G$ appearing as a subscript  in Section   \ref{ewtiguhiufhriuewdwedqwdewd}
which stands for coinvariants, resp.\ crossed products. 


   Recall  Definition \ref{wefuihqfwefwffqwefefwq} of an equivariant coarse homology theory and Definition \ref{wetgjwieoferferfefwre} of a finitary homological functor.
   \begin{theorem} \label{werigowegwergregrewgreg}Assume:
\begin{enumerate} 
\item   
$\Homol$ is a finitary homological functor.
\item $\bC$  admits countable AV-sums and $\bM\bC$ is idempotent complete.
\end{enumerate} 
 Then 
$\Homol\bC\cX^{G}$  is an  equivariant  coarse homology theory.

 \end{theorem}
\begin{proof} 
Note that $\bM$ is stable since it is the target of a homological functor.
We verify the conditions listed in Definition \ref{wefuihqfwefwffqwefefwq}.

\begin{enumerate}
\item $\Homol\bC\cX^{G}$ is coarsely invariant: Let $f,f'$ be    two parallel morphisms in $G\BC$ which are close to each other. Then by Lemma \ref{refhfiueareferfdscvadsc} the functors $\bCgtsm(f)$ and $\bCgtsm(f')$ are unitarily isomorphic. 
Since $\Homol$ sends unitarily isomorphic functors to equivalent morphisms,  the morphisms 
$\Homol\bC\cX^{G}(f)$ and $\Homol\bC\cX^{G}(f')$ in $\bM$ are equivalent.
\item $\Homol\bC\cX^{G}$ is  $u$-continuous: 
Let $X$ be in $G\BC$. Then we must show that the canonical morphism
$$\colim_{U\in \cC^{G}_{X}} \Homol\bC\cX^{G}(X_{U})\to \Homol\bC\cX^{G}(X)$$ is an equivalence.
It has the following   factorization:
\begin{eqnarray*}
\colim_{U\in \cC^{G}_{X}} \Homol\bC\cX^{G}(X_{U})&=&\colim_{U\in \cC^{G}_{X}} \Homol(\bCgtsm (X_{U}))\\&\stackrel{(1)}{\to}&
 \Homol(\colim_{U\in \cC^{G}_{X}} \bCgtsm (X_{U}))\\&\stackrel{(2)}{\to}&
  \Homol( \bCgtsm (X))\\&=& \Homol\bC\cX^{G}(X)\, .
  \end{eqnarray*}
Since $\Homol$ is assumed to be finitary it preserves filtered colimits. Therefore $(1)$ is an equivalence. 
 By the Lemma \ref{ergiuhwergwergergwegergerg} the functor $\bCgtsm$ is $u$-continuous. Hence $(2)$ is also an equivalence.
 \item $\Homol\bC\cX^{G}$ is excisive:
Note that $\bCgtsm(\emptyset)$ is a zero category, see Example \ref{rehiuqwefqweewdqwdew}. Consequently, 
  $\Homol\bC\cX^{G}(\emptyset)\simeq 0_{\bM}$ since $\Homol$  
  is reduced by \cite[Lem. 13.6.2c]{cank}
    
Let now $X$ be in $G\BC$, and let $(\cY,Z)$ be an invariant complementary pair on $X$ with
 $\cY=(Y_{i})_{i\in I}$. For every $i$ in $I$ the morphism
$$\Homol\bC\cX^{G}(Y_{i})\to \Homol\bCgtsm(Y_{i}\subseteq X)$$ is an equivalence by Lemma \ref{oirjwoeigwrgwerg}  since $\Homol$  sends unitarily  equivalences to equivalences. By Lemma \ref{erguieqrogerwgregwergeg}
and  since $\Homol$ preserves filtered colimits we conclude that the canonically induced morphism
$$\Homol\bC\cX^{G}(\cY)\to \Homol\bCgtsm(\cY\subseteq X)$$  is an equivalence, too.   Similarly, 
$$\Homol\bC\cX^{G}(\cY\cap Z)\to \Homol \bCgtsm((\cY\cap Z)\subseteq Z)$$    is an equivalence.
We now consider the diagram 
$$\xymatrix{ \Homol\bC\cX^{G} (\cY\cap Z)\ar[r]^-{\simeq}\ar[d]&\Homol \bCgtsm((Z\cap \cY)\subseteq Z)\ar[r] \ar[d]& \Homol\bC\cX^{G}(Z )\ar[d]\\  \Homol\bC\cX^{G} (\cY)\ar[r]^-{\simeq}&\Homol
\bCgtsm(\cY\subseteq X)\ar[r]& \Homol\bC\cX^{G}(X)}$$
By Lemma \ref{eruigzheiugwergwergvwergr} the right square is the result of an application of $\Homol$ to an excisive square. 
It is a cocartesian square, because $\Homol$ sends excisive squares to cocartesian squares by \cite[Lem. 13.6.b]{cank}.   We conclude that the outer square is cocartesian. 
\item $\Homol\bC\cX^{G}$ vanishes on flasques: If $X$ is flasque, then $\bCgtsm(X)$ is a flasque $C^*$-category by Lemma \ref{ergoijeorgqgrwefqewfqwef}. By  \cite[Prop.\ 13.13]{cank} the functor $\Homol$ annihilates flasques. We conclude that
$\Homol\bC\cX^{G}(X)\simeq 0_{\bM}$.\qedhere
\end{enumerate}
\end{proof}

Recall the Definition  \ref{weifhqewiefjeefqefwefqwef} of continuity. 
\begin{theorem}\label{ergoiegwergewrgwergergwer}
Assume:
\begin{enumerate} 
\item   $\Homol$ is a finitary  homological functor.
\item $\bC$  is effectively additive and admits countable AV-sums. 
\end{enumerate}  Then 
$\Homol\bC\cX_{c}^{G}$  is a continuous  equivariant  coarse homology theory. 
 \end{theorem}
\begin{proof}
The proof {that it is an equivariant  coarse homology theory} is the same as for the Theorem \ref{werigowegwergregrewgreg}.  We just use the cases with subscript $\mathrm{lf}$ instead of $\mathrm{sm}$ in the cited lemmas.

Continuity follows from Lemma \ref{ergiowergerergegwgre} and the fact that $\Homol$ preserves filtered colimits.
\end{proof} 

%

Recall  Definition \ref{wqroijwoidfewdewqdqwdqew} of a strong equivariant coarse homology theory.
\begin{prop}\label{wergwjiofeewfewrq}
The coarse homology theories $\Homol \bC \cX^{G}$ and $\Homol \bC \cX^{G}_{c}$ are strong.
\end{prop}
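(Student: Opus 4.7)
The plan is to reuse the Eilenberg-swindle construction from the proof of Lemma~\ref{ergoijeorgqgrwefqewfqwef}, replacing the closeness of $f$ to $\id_{X}$ (which is unavailable in the weakly flasque setting) by a direct invocation of the weak flasqueness axiom applied to the coarse homology theory $\Homol\bC\cX^{G}$ itself. I will run the argument for $\Homol\bC\cX^{G}$; the case of $\Homol\bC\cX^{G}_{c}$ is identical after substituting $\bCgtsm$ by $\bCgtsmc$ throughout.

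Fix a weakly flasque $X$ with implementing endomorphism $f$ as in Definition~\ref{qriugoqergreqwfqewfqefr324r34r3r34r34r34r}. First I would reconstruct the endofunctor
\[
S\colon \bCgtsm(X)\to \bCgtsm(X), \qquad S(C,\rho,\mu)\coloneqq \bigl(\textstyle\bigoplus_{\nat} C,\ \oplus_{\nat}\rho,\ \oplus_{n\in \nat} f^{n}_{*}\mu\bigr),
\]
from the proof of Lemma~\ref{ergoijeorgqgrwefqewfqwef}. The verifications there that $S(C,\rho,\mu)$ is small and that $S(A)$ is controlled used only Conditions~\ref{qriugoqergreqwfqewfqef}.\ref{qrgioqrgfregqfqefqwefqef1} and \ref{qriugoqergreqwfqewfqef}.\ref{qrgioqrgfregqfqefqwefqef} of flasqueness, which coincide verbatim with Conditions~\ref{qriugoqergreqwfqewfqefr324r34r3r34r34r34r}.\ref{qrgioqrgfregqfqefqwefqef1w} and \ref{qriugoqergreqwfqewfqefr324r34r3r34r34r34r}.\ref{qrgioqrgfregqfqefqwefqefw} of weak flasqueness. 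Hence $S$ is well-defined in the present setting too, and for a suitable choice of representative of the orthogonal sum the identity of endofunctors
\[
\id_{\bCgtsm(X)}\oplus \bigl(\bCgtsm(f)\circ S\bigr)\ =\ S
\]
of \eqref{veflknwlefvwevwevwev} continues to hold.

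Next I would apply the finitary homological functor $\Homol$. Since a homological functor is additive on orthogonal sums, one obtains in the stable target $\bM$ an equivalence of endomorphisms of $\Homol\bC\cX^{G}(X)$
\[
\id\ +\ \Homol\bC\cX^{G}(f)\circ \Homol(S)\ \simeq\ \Homol(S).
\]
By Theorem~\ref{werigowegwergregrewgreg} the functor $\Homol\bC\cX^{G}$ is itself an equivariant coarse homology theory, so Condition~\ref{qriugoqergreqwfqewfqefr324r34r3r34r34r34r}.\ref{toijgworegewrgwer} applied to $E\coloneqq \Homol\bC\cX^{G}$ yields $\Homol\bC\cX^{G}(f)\simeq \Homol\bC\cX^{G}(\id_{X})=\id$. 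Substituting gives $\id+\Homol(S)\simeq \Homol(S)$, hence $\id_{\Homol\bC\cX^{G}(X)}\simeq 0$, and stability of $\bM$ then forces $\Homol\bC\cX^{G}(X)\simeq 0$.

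The only genuinely new input beyond the proof of Lemma~\ref{ergoijeorgqgrwefqewfqwef} is this swap: where previously closeness of $f$ to $\id_{X}$ was used via Lemma~\ref{refhfiueareferfdscvadsc} to identify $\bCgtsm(f)$ with $\id$ up to unitary isomorphism at the level of $C^{*}$-categories, we now defer the comparison to the level of $\bM$ and import it from the weak flasqueness axiom applied to the coarse homology theory under study. The main subtlety to handle is ensuring that the orthogonal-sum identity produces an honest additive identity of morphisms after applying $\Homol$; this is the standard behaviour of homological functors on orthogonal sums and requires no new coarse-geometric input.
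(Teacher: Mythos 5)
Your proof is correct and follows essentially the same route as the paper: both apply $\Homol$ to the Eilenberg-swindle identity \eqref{veflknwlefvwevwevwev} from the proof of Lemma~\ref{ergoijeorgqgrwefqewfqwef}, use the additivity of the homological functor, and then replace the use of coarse invariance by Condition~\ref{qriugoqergreqwfqewfqefr324r34r3r34r34r34r}.\ref{toijgworegewrgwer} applied to $\Homol\bC\cX^{G}$ itself. Your explicit observation that the construction of $S$ and Equation~\eqref{veflknwlefvwevwevwev} only rely on the two conditions shared by flasqueness and weak flasqueness is exactly the point the paper leaves implicit.
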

\begin{proof}
We give the argument in the case of $\Homol \bC \cX^{G}$. The case of $\Homol \bC \cX^{G}_{c}$ is analoguous.

Let $X$  in $G\BC$ be weakly flasque (Definition \ref{qriugoqergreqwfqewfqefr324r34r3r34r34r34r}) with weak flasqueness implemented by  $f\colon X\to X$. Then we apply $\Homol$ to the Equation~\eqref{veflknwlefvwevwevwev}  
obtained in the proof of Lemma \ref{ergoijeorgqgrwefqewfqwef}. Using the additivity of $\Homol$ shown in \cite[Prop.\ 13.11]{cank}  we get 
$$\id_{\Homol \bC \cX^{G}(X)}+\Homol \bC \cX^{G}(X)(f) \circ \Homol(S) \simeq \Homol (S) $$
in $\End_{\bM}(\Homol \bC \cX^{G}(X))$.
By Condition \ref{qriugoqergreqwfqewfqefr324r34r3r34r34r34r}.\ref{toijgworegewrgwer}
we have 
$\Homol \bC \cX^{G}(X)(f) \simeq  \id_{\Homol \bC \cX^{G}(X)}$.
The resulting relation 
$\id_{\Homol \bC \cX^{G}(X)}+  \Homol(S) =\Homol (S)  $ implies that $\Homol \bC \cX^{G}(X)\simeq 0$.
\end{proof}

\begin{ex}
 We have seen in Theorem \ref{ergoiegwergewrgwergergwer}
that  the coarse homology theory $  \Homol\bC\cX^{G}_{c}$ is continuous. In the following example we show that
$  \Homol\bC\cX^{G}$ is not continuous in general.   The example is similar to \cite[Sec.~8.6]{equicoarse}.
 For simplicity we  consider the case of a trivial group and omit the symbols $G$.
 We further assume that the target $\bM$ of the homological functor $\Homol$ is compactly generated.
 
 We assume that $ \bC$ is a $C^{*}$-category which admits countable AV-sums, and which has the properties that   $\bM\bC$ is  idempotent complete and that $\Homol(\bC^{u})\not\simeq 0$ (see Example \ref{weopgwergwerrewgwerg}), where $\bC^{u}$ is the subcategory of $  \bC$ of unital objects.  Unraveling the definitions we have an equivalence  $ \bCtsm(*)\simeq  \bC^{u}$.

We  construct a factorization  
$$ \bCtsm(*)\xrightarrow{i} \bCtsm(\nat_{min,max})\to  \bCtsm(*)$$ of the identity of $\bCgtsm(*)$.
The second functor  is induced by   the projection $\nat_{min,max}\to *$ in $\BC$.  The first functor 
sends $C$  in $\bCtsm(*)$ (we omit the measure since it is a trivial datum) to the object $(\bC,\mu)$ in $\bCtsm(\nat_{min,max})$, where $\mu$ is the projection-valued measure constructed in
 Example~\ref{wiortgjowergwergregewfe} associated to some previously fixed choice of a free ultrafilter on $\nat$.
 The functor furthermore acts as the identity on morphisms.

 Since $\Homol\bC\cX(*)\simeq  \Homol(  \bCtsm(*))\not\simeq 0$ and $\bM$ is compactly generated we can find a compact object $M$ in $\bM$ and a morphism $m \colon M\to  \Homol\bC\cX(*)$ such that the composition
$$M\xrightarrow{m} \Homol\bC\cX(*)\xrightarrow{\Homol(i)} \Homol\bC\cX(\nat_{min,max})$$
is non-trivial. If $F$ is any finite subset of $\nat$, then we have a
 coarsely disjoint partition $(F,\nat\setminus F)$ of $\nat_{min,max}$ and $\mu(F)=0$ (since the ultrafilter is assumed to be free). By excision we {thus} have a decomposition
$$ \Homol\bC\cX(\nat_{min,max})\simeq 
\Homol\bC\cX( F) \oplus   \Homol\bC\cX(\nat_{min,max}\setminus F) \, , $$
 and {we} let $p_{F}\colon \Homol\bC\cX(\nat_{min,max})\to
 \Homol\bC\cX( F)$ be the projection into the first summand. 
 Since $\mu(F)=0$ it follows that $p_{F}\circ \Homol(i)\circ m\simeq 0$. 
 Since this holds for every finite subset $F$ it follows that
 $\Homol(i)\circ m$ is not in the image of 
 $$\colim_{F\in \LF(\nat_{min,max})}  \pi_{0}\map_{\bM}(M, \Homol\bC\cX( F))\xrightarrow{c} \pi_{0}\map_{\bM}(M, \Homol\bC\cX( \nat_{min,max})) \, .$$
 On the other hand, if $ \Homol\bC\cX$  would be  continuous, then the canonical map $c$ would be an isomorphism. Hence we would obtain a contradiction.
\hB
\end{ex}

   \begin{ex}\label{weopgwergwerrewgwerg}
   Here we explain how one can ensure the condition $\Homol(\bC^{u})\not\simeq 0$.
 Let  $A$ be a  unital $C^{*}$-algebra and consider the $C^{*}$-category
  $\bC=\Hilb_{c}(A)$ of very  small Hilbert $A$-modules and adjointable compact operators.   
 This category admits all very  small $AV$-sums and its multiplier category $\Hilb(A)$ is idempotent complete. Furthermore,
  $\bC^{u}= \Hilb(A)^{\fg,\mathrm{proj}}$ is the full subcategory of $\Hilb(A)$ of finitely generated, projective $A$-modules. The inclusion
  $A\to   \Hilb(A)^{\fg,\mathrm{proj}}$ is a Morita equivalence (\cite[Ex.\ 16.9]{cank}), where we consider $A$ as a $C^{*}$-category with a single object.
  If $\Homol$ is Morita invariant (see Definition \ref{wetgjwieoferferfefwre}), 
 then we have an equivalence
  $\Homol(A)\simeq \Homol(\bC^{u})$. 
  So if $\Homol(A)\not\simeq
0$, then also $\Homol(\bC^{u})\not\simeq 0$. 
We can apply this to the functor $\Homol \coloneqq \Kcat$ and choose for $A$ any unital $C^{*}$-algebra with $\Kast(A)\not\simeq 0$.
\hB
\end{ex}

%
%
%

%


Since $\Homol\bC\cX^{G}_{c}$ is a coarse homology theory it sends coarse equivalences to equivalences. In the following we show that under mild additional assumptions it also sends weak coarse equivalences to equivalences (Definition~\ref{wetgiojwergergergw}).


  Recall that  for  $X$  in $G\BC$ we {denote by}  $\LF(X)$   the poset of {$G$-}invariant  locally finite subsets of $Y$ (Definition \ref{rgoiqrgrfqwfewfq}).

If $L,H$ are sets, then a multivalued map $s \colon L\to H$ is a subset (also denoted by $s$) of $L\times H$     such that the projection  $s\to L$ is surjective.
The multivalued map $s$ is called surjective if the projection $ s\to H$ is surjective. 
For $\ell$ in $L$ the projection $s(\ell)$ of $ (\{\ell\}\times H)\cap s$ to $H$ is called the image of $\ell$ under $s$. Note that $s(\ell)$ is a subset of $H$.
In order to describe a multivalued map it suffices to describe the non-empty subsets $s(\ell)$ for all $\ell$ in $L$. For every $h$ we consider the subset
$L_{h} \coloneqq \{\ell\in L\:|\: h\in s(\ell)\}$ of $L$ and form $\tilde L \coloneqq \bigsqcup_{h \in H} L_{h}$. Then we have maps
\begin{equation}\label{3grgewwgregewg}
L\xleftarrow{q} \tilde L\xrightarrow{\tilde s} H\, ,
\end{equation}
where $\tilde s$ sends the elements of $L_{h}$ to $h$ for every $h$ in $H$ and $q$ is the natural map.
We say that $s$ is proper if $q$ and $\tilde s$ have finite fibres. 
If $G$ acts on $L$ and $H$, then $s$ is called equivariant if the subset $s$ is $G$-invariant.

 Let $f\colon X\to Y$ be a morphism in $G\BC$, and consider $L$ in  $\LF(X)$ and $H$ in $\LF(Y)$.

\begin{ddd}\label{qerigjowergergwerfwefrfwefr}
$L$ is an $f$-shadow of $H$ if there is a proper surjective equivariant  multivalued map  $s \colon L\to H$  such that $(f\times \id_{Y})(s)$ is a coarse  entourage of $Y$.
\end{ddd}

   \begin{ddd}
  $\LF(f)$ is almost surjective if
  every  element of $\LF(Y)$ admits an $f$-shadow   in $\LF(X)$.
 \end{ddd}


 \begin{prop}\label{ergijweogerwfwerfwerf}
 Assume:
 \begin{enumerate}
\item  \label{ergoijgiojgogwergrwewfe}$f$ is a weak coarse equivalence. 
 \item \label{qwkfuhqiweferwfqewfewdqewdewdeq} $\LF(f) $ is almost surjective.
 \item \label{eroigjerogerthwtg} $\bC$ admits all very small AV-sums. 
 \item\label{eioghijoffdcsdadf} $\Homol$ is Morita invariant. 
 \end{enumerate}
 Then {$f \colon \Homol \bC \cX^{G}_c(X)\to  \Homol \bC \cX^{G}_c(Y)$} is an equivalence.
  \end{prop}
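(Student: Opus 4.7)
The strategy is to combine the continuity of $\Homol\bC\cX^G_c$ with the almost surjectivity hypothesis and a Morita-type invariance argument. By Theorem \ref{ergoiegwergewrgwergergwer} (which applies since $\Homol$ is finitary) the functor $\Homol\bC\cX^G_c$ is continuous, so
\[
\Homol\bC\cX^G_c(X) \simeq \colim_{L \in \LF(X)} \Homol\bC\cX^G_c(L), \qquad \Homol\bC\cX^G_c(Y) \simeq \colim_{H \in \LF(Y)} \Homol\bC\cX^G_c(H).
\]
Properness of $f$ gives $|f(L) \cap B| \le |L \cap f^{-1}(B)| < \infty$ for $B \in \cB_Y$, so $L \mapsto f(L)$ defines a morphism of filtered posets $\LF(X) \to \LF(Y)$; the map under consideration is the resulting morphism of filtered colimits, which I must show is an equivalence.

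To this end I use hypothesis \ref{qwkfuhqiweferwfqewfewdqewdewdeq} to construct a cofinal subposet $P \subseteq \LF(X) \times \LF(Y)$ whose objects are pairs $(L,H)$ together with a chosen $f$-shadow structure $s \colon L \to H$. The projection $P \to \LF(Y)$ is cofinal by almost surjectivity. For each such pair the factorization \eqref{3grgewwgregewg} produces an invariant locally finite set $\tilde L = \bigsqcup_{h\in H} L_h$ with equivariant finite-to-one surjections $q \colon \tilde L \to L$ and $\tilde s \colon \tilde L \to H$. I equip $\tilde L$ with the bornology pulled back from $L$ via $q$ (equivalently from $H$ via $\tilde s$, since both maps have finite fibres) and with the coarse structure generated by the pullbacks of $\cC_L$ via $q$ and of $\cC_H$ via $\tilde s$. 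The shadow condition $(f\times \id_Y)(s) \in \cC_Y$ then implies that the two composites $f \circ q, \tilde s \colon \tilde L \to Y$ are close morphisms in $G\BC$, so both $q$ and $\tilde s$ are morphisms in $G\BC$.

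The heart of the argument is to show that $q$ and $\tilde s$ induce equivalences
\[
\Homol\bC\cX^G_c(L) \xleftarrow{q_*} \Homol\bC\cX^G_c(\tilde L) \xrightarrow{\tilde s_*} \Homol\bC\cX^G_c(H).
\]
This is a Morita-type statement: the pushforward along an equivariant surjection with finite fibres between locally finite $G$-bornological coarse spaces produces, at the level of Roe $C^*$-categories, a Morita equivalence. Concretely, for each $\ell \in L$ the fibre $q^{-1}(\ell)$ is finite, and taking orthogonal sums over these fibres (using very small additivity of $\tilde \bC$ from hypothesis \ref{eroigjerogerthwtg} together with idempotent completeness) yields a left adjoint to $q_*$ on the Roe categories whose unit and counit are Morita equivalences in the sense of \cite[Ex.\ 14.7]{cank}. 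By Morita invariance of $\Homol$ (hypothesis \ref{eioghijoffdcsdadf}) the induced map on $\Homol$ is an equivalence, and the same argument applies to $\tilde s$. Coarse invariance of $\Homol\bC\cX^G_c$, via Lemma \ref{refhfiueareferfdscvadsc} applied to the close composites $f \circ q$ and $\tilde s$ into $Y$, ensures that these zigzags are compatible with the map induced by $f$; combining with the cofinality of $P \to \LF(Y)$ yields that $\Homol\bC\cX^G_c(f)$ is an equivalence.

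The main obstacle is the Morita-type equivalence in the third step: one must carefully construct the adjoint to $q_*$ on Roe categories using very small orthogonal sums indexed by the finite fibres of $q$, and verify that the resulting functors preserve the smallness, local finiteness, and control conditions in a compatible way. The weak coarse equivalence hypothesis on $f$ enters indirectly, via the close composites $f \circ q \simeq \tilde s$, which guarantees that the Morita zigzags are compatible with the morphism induced by $f$; without equivariance of the shadow one would not be able to set up the equivariant adjoint at all, which is precisely why the almost surjectivity assumption on $\LF(f)$ is the correct equivariant replacement for a coarse inverse.
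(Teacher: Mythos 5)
Your strategy is genuinely different from the paper's: you reduce to locally finite subsets via continuity and construct zigzags $L \leftarrow \tilde L \rightarrow H$ for each shadow pair, whereas the paper works directly with the full Roe categories, proving that $f_*\colon \bCgtsmc(X)\to \bCgtsmc(Y)$ is itself a Morita equivalence (fully faithful, and every object in the target is a direct summand of something in the image). The paper's route avoids the delicate naturality issues of a zigzag-through-colimits argument entirely. More importantly, however, there is a concrete gap in your plan.

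You assert that the shadow condition alone ``implies that the two composites $f\circ q, \tilde s\colon \tilde L\to Y$ are close morphisms in $G\BC$, so both $q$ and $\tilde s$ are morphisms in $G\BC$,'' and you declare that the weak coarse equivalence hypothesis enters ``only indirectly.'' This is where the argument breaks. With $\cC_{\tilde L}$ generated by both pullbacks, controlledness of $q$ requires $(q\times q)(\tilde s^{-1}(W))\in \cC_L$ for $W\in\cC_H$; one computes (via the shadow entourage $V_0=(f\times\id_Y)(s)$) that $(f\times f)\bigl((q\times q)(\tilde s^{-1}(W))\bigr)\subseteq V_0^{-1}\circ W\circ V_0\in\cC_Y$, and converting this into an entourage of $L\subseteq X$ requires precisely that $(f\times f)^{-1}$ of a $Y$-entourage be an $X$-entourage, i.e.\ that $f$ be a weak coarse equivalence. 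Without it, $q$ need not be controlled. A clean counterexample: take $X=Y=\Z$ with the same bornology, $\cC_Y$ the metric coarse structure, $\cC_X$ the minimal one, and $f=\id$; then $f$ is proper, controlled, and $\LF(f)$ is almost surjective (take $L=H$ and $s=\diag$), yet $f$ is not a weak coarse equivalence, $\tilde L=L=H$, and with your generated coarse structure on $\tilde L$ the map $q=\id$ is not controlled. (The same hypothesis is also needed, albeit at a different point, if you instead equip $\tilde L$ with only the pullback of $\cC_L$: then $q$ is controlled, but $\tilde s_*$ fails to be fully faithful without the weak coarse equivalence.) In the paper's direct proof the role of weak coarse equivalence is transparent --- it gives $(f\times f)^{-1}(U_n)\in\cC_X$, which is what makes $f_*$ fully faithful.

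Secondly, the Morita-equivalence claim for $q_*$ and $\tilde s_*$ is only sketched, and the equivariance is a genuine obstacle you do not address. Your proposed ``left adjoint'' lift of an object $(D,\sigma,\nu)$ of $\bCgtsmc(L)$ along $q$ would require a $G$-equivariant section of $q$, which in general does not exist. The paper's argument circumvents this by building the shadow object from orthogonal sums $\bigoplus_{\tilde\ell\in\tilde L}C(\tilde s(\tilde\ell))$, constructing the cocycle and measure by hand, and exhibiting the target object as a summand via an explicit equivariant isometry $U$; none of this is a formal adjunction. Finally, the naturality of your zigzags in the pair $(L,H,s)$ and the verification that the resulting equivalence on colimits agrees with $\Homol\bC\cX^G_c(f)$ is nontrivial (your closeness argument for $f\circ q$ and $\tilde s$ is the right idea, but one must actually check compatibility with the transition maps of both filtered diagrams), and this coherence is exactly what the paper's direct approach avoids having to establish.
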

\begin{proof}
In view of Assumption \ref{eioghijoffdcsdadf} it suffices to show that $f_{*} \colon \bCgtsmc(X)\to  \bCgtsmc(Y)$ is a Morita equivalence, see Definition \ref{weroigjowregrefwerferwfw}. 
To this end it suffices to show that $f_{*}$ is fully faithful, and that every object in $ \bCgtsmc(Y)$ is a subobject of an object in the image of $f_{*}$.

We first show that $f_{*}$ is fully faithful. 
Let $(C,\rho,\mu)$ and $(C',\rho',\mu')$ be objects in $\bCgtsmc(X)$.
Recall that $f_{*}(C,\rho,\mu)=(C,\rho,f_{*}\mu)$.
We have a canonical inclusion
$$\Hom_{ \bCgtsmc(X)}((C,\rho,\mu),(C',\rho',\mu'))\subseteq \Hom_{\bCgtsmc(Y)}(f_{*}(C,\rho,\mu),f_{*}(C',\rho',\mu'))\,,$$
where we consider both morphism spaces as closed subspaces of $\Hom_{{(\bM\bC)}^{G}}((C,\rho),(C',\rho'))$.
We have to show that this inclusion is an equality.
To this end we consider a morphism $A$  in the space $  \Hom_{\bCgtsmc(Y)}(f_{*}(C,\rho,\mu),f_{*}(C',\rho',\mu')) $.
Then there exists  a family  of morphisms $(A_{n})_{n\in \nat}$ in $\Hom_{\bCg}((C,\rho),(C',\rho'))$ and a family $(U_{n})_{n\in \nat}$ of coarse entourages of $Y$ such that
$\lim_{n\to \infty} A_{n}=A$ in the norm of $\Hom_{{(\bM\bC)}^{G}}((C,\rho),(C',\rho'))$ (equivalently, in $\Hom_{\bM\bC}(C,C')$), and    $A_{n} \colon (C,\rho,f_{*}\mu)\to (C',\rho',f_{*}\mu')$ is $U_{n}$-controlled  (Definition \ref{giojerogefrefwefrewf}) for every $n$ in $\nat$.

We fix $n$ in $\nat$.
By Assumption \ref{ergoijgiojgogwergrwewfe}  the subset $V_{n}:=(f\times f)^{-1}(U_{n})$ of $X\times X$ is a coarse entourage of $X$. We show now that  $A_{n} \colon (C,\rho,\mu)\to  (C',\rho',\mu')$ is $V_{n}$-controlled.
To this end we 
consider  two subsets $Z,Z'$    of $X$ such that $Z'$ is $V_{n}$-separated from $Z$.
Then $f(Z')$ is $U_{n}$-separated from $f(Z)$ and hence $f_{*}\mu'(f(Z'))A_{n}f_{*}\mu( f(Z))=0$ since $A_{n}$ is $U_{n}$-controlled.
Using $Z\subseteq f^{-1}(f(Z))$ and therefore $\mu(Z)=\mu(f^{-1}(f(Z)) \mu(Z) $, {and analogously for $Z'$,} we get 
$$\mu'(Z')A\mu(Z)= \mu'(Z')\big(   f_{*}\mu'(f(Z'))Af_{*}\mu( f(Z))\big) \mu(Z)=0\, .$$
 This implies that
$A\in \Hom_{ \bCgtsmc(X)}((C,\rho,\mu),(C',\rho',\mu'))$.

We now show that every object of $   \bCgtsmc(Y)$ is a direct summand of the image of an object of $  \bCgtsmc(X)$ under $f_{*}$.
To this end we fix  $(C,\rho,\mu)$ in $  \bCgtsmc(Y)$. Then $H \coloneqq \supp((C,\rho,\mu))$ (Definition \ref{erigojegerwfvc}) is an element of $\LF(Y)$ by Definition \ref{rthoperthrtherthergtrgertgertretr}.\ref{qwriuheivnfvkjvvcd9}.  By Definition \ref{qerigjowergergwerfwefrfwefr}  we can choose an $f$-shadow $L$ in $\LF(X)$ and a proper  surjective equivariant multivalued map $s\colon L\to H$ such that
\begin{equation}\label{eq_entourage_V_weak_coarse_equiv}
V \coloneqq (f\times \id_{Y})(s) 
\end{equation}
is a coarse entourage of $Y$.

Since $(C,\rho,\mu)$ is  determined on points (Definition \ref{rgoigsgsgfgg})  
we can choose images $u_{h} \colon C(h)\to C$ in $\bC$ of the projections $\mu(\{h\})$ for all $h$ in $H$ such that $(C,(u_{h})_{h\in H})$ represents the AV-sum of the family $(C(h))_{h\in H}$.
Since $(C,\rho,\mu)$ is locally finite and hence small we can furthermore conclude that   $C(h)$ belongs to $\bC^{u}$.
 
Associated to the multivalued map $s$ we construct the maps   $$L\xleftarrow{q} \tilde L\xrightarrow{\tilde s} H\, .$$ as in  \eqref{3grgewwgregewg}.
By assumption $q$ has finite fibres, and $\tilde s$ has finite, non-empty fibres.
 The group $G$ acts on $\tilde L$ in the canonical way such that $q$ and $\tilde s$ become equivariant.

Since $\bC$ admits all very small  orthogonal AV-sums by Assumption \ref{eroigjerogerthwtg}
we can choose an orthogonal AV-sum $(C',(e_{\tilde \ell})_{\tilde \ell\in \tilde L} )$ of the family $(C(\tilde s(\tilde \ell)))_{\tilde \ell\in \tilde L} $.
Since the summands belong to $\bC^{u}$ we see that the morphisms $e_{\tilde \ell}$ belong to $\bC$ for all $\tilde \ell$ in $\tilde L$.

In the following we extend $C'$ to an object $(C',\rho',\mu')$ of $  \bCgtsmc(X)$. We start with the construction of $\rho'$.
Let $g$ be in $G$. Then we define
${\rho'_{g}} \colon C'\to gC'$ such that the following diagram commutes for every $\tilde \ell$ in $\tilde L$:
\begin{equation}\label{trfeqwrfewfqewvfdv}
\xymatrix{C'\ar[r]^{\rho_{g}'}& gC'\\
C(\tilde s(\tilde \ell))\ar[u]^{e_{\tilde \ell}}\ar[d]_{u_{\tilde s(\tilde \ell)}}&gC(\tilde s(g^{-1}\tilde \ell))\ar[u]_{ge_{g^{-1}\tilde \ell}}\ar[d]^{gu_{\tilde s(g^{-1}\tilde \ell)}}\\
C\ar[r]^{\rho_{g}}&gC}
\end{equation}
In other words, 
we set
$$\rho'_{g} \coloneqq \sum_{\tilde \ell\in \tilde L} ge_{g^{-1}\tilde \ell} gu_{\tilde s (g^{-1}\tilde \ell)}^{*}\rho_{g}u_{\tilde s(\tilde \ell)} e_{\tilde \ell}^{*}\, .$$
One checks using \cite[Lem. 7.8]{cank} that this sum converges strictly and defines a unitary morphism in $\bM\bC$.
Using Diagram~\eqref{trfeqwrfewfqewvfdv} we further check that the cocycle condition $g\rho_{h} \rho_{g}=\rho_{gh}$
(see Definition \ref{etgiowergergwegwerg}.\ref{ewrgiuhefdcsdcsdc}) implies the condition $g\rho_{h}'\rho_{g}'=\rho'_{gh}$ for all $g,h$ in $G$.
  We set $\rho' \coloneqq (\rho'_{g})_{g\in G}$.
  
We define the projection-valued measure
 $\mu'$ on $C'$ such that $$\mu'(\{\ell\})=\sum_{\tilde \ell\in  q^{-1}(\{\ell\})}e_{\tilde \ell}e_{\tilde \ell}^{*}\, .$$   Since $s$  is proper these  sums are finite, and since the $e_{\tilde \ell}$ belong  $\bC$ also $\mu'(\{\ell\})$ belongs to $\bC$ for every $\ell$ in $L$. 
 Using the explicit formula one checks that 
 $\mu'$ satisfies   the conditions  listed Definition~\ref{gwergoiegujowergwergwergwerg}. 
  Then $(C',\rho',\mu')$ is an object of $ \bCgtsmc(X)$.

We now define a morphism
$$ U\colon (C,\rho,\mu)\to  f_{*}  (C',\rho',\mu')$$
 by 
 $$U \coloneqq \sum_{h\in H} \Big(\frac{1}{\sqrt{|\tilde s^{-1}(\{h\})|}} \sum_{\tilde \ell\in \tilde s^{-1}(\{h\})} e_{\tilde \ell}\Big) u^{*}_{h}\ .$$
Note that the fibres of $\tilde s $ are non-empty so that the prefactor is well-defined.
In order to see that the sum defining $U$ converges strictly we 
  first observe that
 $$p_{h} \coloneqq \frac{1}{|\tilde s^{-1}(\{h\})|} \sum_{\tilde \ell,\tilde \ell'\in \tilde s^{-1}(\{h\})} e_{\tilde \ell} e_{\tilde \ell'}^{*}
$$ is a projection for every $h$ in $H$, and that  the family $(p_{h})_{h\in H}$ is mutually orthogonal. We then argue using \cite[Lem. 7.4.3]{cank} and the involution that the sum  over $H$ defining $U$ converges strictly. Using the explicit formula 
we then check using  \cite[Lem. 7.4.1]{cank}    that $U$ is an isometry, and that 
 $gU \rho_{g}=\rho'_{g}U$. By construction, 
 the morphism $U$ is $V$-controlled, {where $V$ is the entourage from \eqref{eq_entourage_V_weak_coarse_equiv}.}
 
 The isometry $U$ exhibits $(C,\rho,\mu)$ as a subobject of $  f_{*}  (C',\rho',\mu')$.
\end{proof}

 \begin{ex}\label{ex_Gmaxmax}
 The map $G_{max,max}\to *$ is a weak coarse equivalence. If $G$ is infinite we have
 $\LF(G_{max,max})=\emptyset$, but $\LF(*)$ is not empty.
 Then in this case
 $\Homol \bC\cX^{G}_{c}(G_{max,max})\simeq 0$, but in general $\Homol \bC\cX^{G}_{c}(*)\simeq {\Homol}(\bC^{u,G})\not\simeq 0$, where $\bC^{u,G}$ denotes that category of  $G$-objects in the category of unital objects of $  \bC^{u}$.
 This shows the relevance of Condition~\ref{ergijweogerwfwerfwerf}.\ref{qwkfuhqiweferwfqewfewdqewdewdeq}.
\hB
\end{ex}

In order to check the Condition \ref{ergijweogerwfwerfwerf}.\ref{qwkfuhqiweferwfqewfewdqewdewdeq} the following lemma is sometimes helpful.

 Let $f \colon X\to Y$ be a morphism in $G\BC$.
  \begin{lem}\label{erigoergwferferfwrefwre}
 Assume:
 \begin{enumerate}
 \item $f$ is a weak coarse equivalence.
 \item There exists an entourage $U$ of $Y$ such that for every point $y$ in $Y$ there is $x$ in $X$ with:
 \begin{enumerate}
 \item $f(x)\in U[\{y\}]$.
 \item  $G_{x}\subseteq G_{y}$, where $G_{x}$ and $G_{y}$ denote the stabilizers of $x$ and $y$. 
 \item\label{4thgijortgwergfwrefwref} $|G_{y}/G_{x}|<\infty$.
 \end{enumerate}
  \end{enumerate}
Then $\LF(f)$ is almost surjective.   
 \end{lem}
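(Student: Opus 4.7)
My plan is to construct an explicit $f$-shadow of $H$ directly from the hypothesis. First, by Condition~\ref{igwoegwergergwrgrg} of Definition~\ref{trbertheheht}, the sub-poset $\cC_Y^G$ is cofinal in $\cC_Y$, so after enlarging $U$ I may assume $U\in \cC_Y^G$. I then fix a set $H_0\subseteq H$ of representatives for the $G$-orbits in $H$ and, using the hypothesis together with the axiom of choice, pick for each $y_0\in H_0$ an element $x_{y_0}\in X$ satisfying (a)--(c). Set
\[
L\coloneqq \bigcup_{y_0\in H_0} G\,x_{y_0}\subseteq X,\qquad s\coloneqq \bigl\{(g x_{y_0},g y_0):g\in G,\ y_0\in H_0\bigr\}\subseteq L\times H.
\]

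The routine verifications are as follows. The subset $L$ is $G$-invariant, and $s$ is $G$-invariant hence equivariant; the projection $\tilde s$ is surjective onto $H$ because every $h\in H$ lies in a unique orbit $Gy_0$. For the entourage condition I use the $G$-invariance of $U$: the relation $f(x_{y_0})\in U[\{y_0\}]$ rewrites as $(f(x_{y_0}),y_0)\in U^{-1}$, so every pair in $s$ has the form $g\cdot(f(x_{y_0}),y_0)\in gU^{-1}=U^{-1}$, giving $(f\times\id_Y)(s)\subseteq U^{-1}\in \cC_Y$. For properness of $\tilde s$, the inclusion $G_{x_{y_0}}\subseteq G_{y_0}$ ensures that the fiber of $\tilde s$ over $h=g_0 y_0$ equals $g_0 G_{y_0} x_{y_0}$, of finite cardinality $|G_{y_0}/G_{x_{y_0}}|$ by hypothesis~(c). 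Finiteness of $s(\ell)$ for each $\ell\in L$ will follow from $s(\ell)\subseteq H\cap U^{-1}[\{f(\ell)\}]$, using that $\{f(\ell)\}\in \cB_Y$ (by Condition~\ref{qergoijiogrgqgreg} of Definition~\ref{wthoiwhthwgreggwregwgr}) makes $U^{-1}[\{f(\ell)\}]\in \cB_Y$ by compatibility.

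The main obstacle is to show $L\in \LF(X)$, i.e.\ that $L\cap B$ is finite for every $B\in \cB_X$. My key additional input is the weak coarse equivalence assumption, which provides a non-equivariant coarse inverse $\bar f\colon Y\to X$ in $\BC$; this $\bar f$ is proper, and closeness of $\bar f\circ f$ to $\id_X$ yields $W\in \cC_X$ with $\bar f(f(x))\in W[\{x\}]$ for all $x\in X$. Given $B\in \cB_X$, compatibility gives $W[B]\in \cB_X$, and properness of $\bar f$ then yields $B'\coloneqq \bar f^{-1}(W[B])\in \cB_Y$, with $f(B)\subseteq B'$. Then for any $\ell\in L\cap B$ and $h\in s(\ell)$, the entourage bound forces $h\in U^{-1}[\{f(\ell)\}]\subseteq U^{-1}[B']$; since $U^{-1}[B']\in \cB_Y$ by compatibility and $H$ is locally finite in $Y$, the set $s(L\cap B)\subseteq H\cap U^{-1}[B']$ is finite. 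Combining this with the finiteness of the fibers $L_h$ already established, I conclude
\[
|L\cap B|\le \sum_{h\in s(L\cap B)}|L_h|<\infty,
\]
which finishes the proof.
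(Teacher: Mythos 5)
Your proof is correct and follows essentially the same route as the paper: enlarge $U$ to a $G$-invariant entourage, choose orbit representatives of $H$, build $L$ and $s$ from $G$-translates, bound the fibres of $\tilde s$ by $|G_y/G_x|$, and use local finiteness of $H$ together with boundedness of $U^{\pm 1}$-thickenings of singletons and of $f(B)$ to conclude. The only cosmetic difference is that you unwind the fact that a (weak) coarse equivalence is bornological via an explicit coarse inverse $\bar f$, whereas the paper simply invokes it.
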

 \begin{proof}
 After   enlarging $U$ if necessary can assume that $U$ is $G$-invariant.
Let $H$ be in $\LF(Y)$. 
 We fix a representative $h$ for every $[h]$ in $H/G$.  Then we can find $\ell_{h}$ in $X$ such that
$(f(\ell_{h}),h)\in U$,  $G_{\ell_{h}}\subseteq G_{h}$ and $|G_{h}/G_{\ell_{h}}|<\infty$.  We set $L \coloneqq \bigcup_{[h]\in H/G} G\ell_{h}$ and 
 define the $G$-invariant subset
$s \coloneqq \bigcup_{[h]\in H/G} G(\ell_{h},h)$ of $ L\times H$. Then $s$ is a $G$-equivariant surjective multivalued map and
$(f\times \id_{Y})(s)\subseteq U$.

We have $\tilde L\cong  \bigsqcup_{[h]\in H/G} G\ell_{h}$. For every $[h]$ in $H/G$ we have $|\tilde s^{-1}(h)|=|G_{h}/G_{\ell_{h}}|<\infty$. Thus by $G$-invariance all fibres of $\tilde s$ are finite.

Let $\ell$ be in $L$.
If $g\ell_{h}\in q^{-1}(\ell)$, then $gh\in U^{-1}[f(\{\ell\})]$.
Since $H$ is locally finite and $ U^{-1}[f(\{\ell\})]$ is a bounded  subset of $Y$ the intersection
  $H\cap U^{-1}[f(\{\ell\})]$ is finite. Since
  $q^{-1}(\ell)\subseteq \tilde s^{-1}(H\cap  U^{-1}[f(\{\ell\})])$ 
  and 
  the fibres of $\tilde s$ are finite we conclude that the fibres of $q$ are finite, too.
   Hence $s$ is proper.

   It remains to show that $L$ is locally finite.
       Let $B$ be a bounded subset of $X$. Since  $f$ is a coarse equivalence, it is bornological and 
   $ U^{-1}[f(B)]$ is bounded in $Y$.
       Since $H$ is locally finite, $H \cap U^{-1}[f(B)]$ is finite. Since $\tilde s$ has finite fibres and   $\tilde s(q^{-1}(L\cap B))\subseteq H \cap U^{-1}[f(B)]$, we conclude that $L\cap B$ is finite.
\end{proof}
       
\begin{ex}
Let $X$ be in $G\BC$, and let $U$ be an invariant coarse entourage. Then
  $P_{U}(X)$ is defined as the $G$-simplicial set of finitely supported  probability  measures on $X$ with $U$-bounded support. We consider $P_{U}(X)$
  as a $G$-bornological coarse space with the coarse structure induced by the path quasi-metric coming from the spherical path metric on the simplices, and with the bornology generated by the subcomplexes
  $P_{U}(B)$ for the bounded subsets $B$ of $X$.

  We consider the canonical inclusion
  $f \colon X\to P_{U}(X)$ which sends $x$ in $X$ to the $\delta$-measure at $x$.   
This inclusion satisfies the assumptions of Lemma \ref{erigoergwferferfwrefwre}. 


Let us make Proposition \ref{ergijweogerwfwerfwerf} explicit in this case for the functor $\Homol=\Kcat$ from \eqref{fqwerfoihjqiowefjqwedewdqwd}.
Note that $\Kcat$ is a Morita invariant, finitary homological functor by  \cite[Thms.\ 14.4 \& 16.17]{cank}.
We assume that $  \bC$ is in $\Fun(BG, \nCcat)$ and admits all very small AV-sums. 
Then by Proposition \ref{ergijweogerwfwerfwerf}
\begin{equation}\label{revfeervwfrewfe}
\Kcat \bC\cX^{G}_{c}(  X)\to  \Kcat \bC\cX^{G}_{c}( P_{U}(X))
\end{equation} 
is 
an equivalence for every invariant entourage $U$ of $X$.

This can be applied in order to define the equivariant coarse assembly map for $X$ by the naive equivariant generalization of \cite[Defn.~9.7]{ass} by
\begin{align*}
\mu_{X}\colon \colim_{U\in \cC^{G}} \Kcat \bC\cX^{G}_{c}(\cO^{\infty}(P_{U}(X))) & \xrightarrow{\partial} \colim_{U\in \cC^{G}} {\Sigma} \Kcat \bC\cX^{G}_{c}( P_{U}(X))\\
& \stackrel{!}{\simeq} \Sigma \Kcat \bC\cX^{G}_{c}(X)\, ,
\end{align*}
where the equivalences \eqref{revfeervwfrewfe} are used in order to get the marked equivalence, and
$\partial$ is the boundary map of the cone sequence \cite[Cor.~9.30]{equicoarse}.
 
 By     Proposition \ref{wergwjiofeewfewrq}  the equivariant coarse homology theory $ \Kcat \bC\cX^{G}_{c}$ is strong.    Using   \cite[Sec.\ 11.3]{equicoarse} we see that
$$ X\mapsto \Kcat \bC\cX^{G}_{c}\cO^{\infty}\bP(X) \coloneqq \colim_{U\in \cC^{G}} \Kcat \bC\cX^{G}_{c}(\cO^{\infty}(P_{U}(X)))$$
  gives rise  to an equivariant coarse homology theory, and that the    assembly map   refines to a natural transformation between equivariant coarse homology theories 
   $$\mu\colon \Kcat \bC\cX^{G}_{c}\cO^{\infty}\bP\to \Sigma \Kcat \bC\cX^{G}_{c}\, .$$
This is the analogue of  \cite[Cor.~11.26]{equicoarse}, but
without having to twist by a bornological coarse space with a free $G$-action (denoted by $Q$ in the reference).
\hB
\end{ex}

\begin{ex}\label{ex_classical_coarse_K_hom}
In \cite[Sec.~8.6]{buen} the coarse $K$-homology $\KX\colon \BC \to \Sp$ was defined as the composition $\Kcat \circ \bC^*$, where the functor $\bC^*\colon \BC \to \Ccat$  sends $X$ in $\BC$ to the Roe category $\bC^{*}(X)$ as introduced in \cite[Defn.~8.74]{buen}.  In this example we explain how this construction fits into the framework of the present paper. 

We work in the case of the trivial group   and therefore  in the following drop  all the notation referring to the group action. For $\bC$ we 
take  $C^*$-category $\Hilb_{c}(\C)$ of   very small Hilbert spaces and  
compact operators. It  admits all very small AV-sums.
 We  will show that there is  an equivalence of coarse homology theories
$$\KX\simeq \Kcat \bC\cX_{c} \colon G\BC\to \Sp\, .$$
This will follow from an equality of functors 
$$\bCtsmc= \bC^{*} \colon \BC\to \Ccat\, .$$  We first compare the objects. In both cases an object $(C,\mu)$  on $X$ in $\BC$ consists of a Hilbert space $C$ together with a projection valued, finitely additive measure $\mu$. For both sides we require  that $C$ is the orthogonal sum of the images of the family $(\mu(\{x\}))_{x\in X}$.  Here it is important that classical orthogonal sums of Hilbert spaces in $\Hilb(\C)$ coincide with AV-sums in $\Hilb_{c}(\C)$ by \cite[Thm. 8.4]{cank}.

For $\bC^{*}(X)$ we in addition require that $\mu(B)H$ is finite-dimensional for every
bounded subset $B$ of $X$. This condition is equivalent to the condition that
$\mu(B)\in \bC$  required for $\bCtsmc(X)$. For $\bCtsmc(X)$ we in addition require that $\supp(C,\mu)$ is locally finite. Since $\mu(B)$ is finite-dimensional and every point of the support contributes non-trivially to the dimension the set   $\supp(C,\mu)\cap B$ is indeed finite for any bounded subset $B$ of $X$.

In both cases the morphism spaces are defined as the closures of the spaces of bounded operators of controlled propagation. In order to get an actual equality of morphism spaces, we use 
 $\Hilb(\C)$ as a concrete model of the multiplier 
 category $ \bM\Hilb_{c}(\C)$  (this is possible by  \cite[Lem. 8.1]{cank})   in the construction of $\bCtsmc(X)$.

 Finally, the functoriality for morphisms in $\BC$ is defined in the same way for 
$\bCtsmc(X)$ and $  \bC^{*}(X)$.
\hB
\end{ex}

\section{The coarse homology theories \texorpdfstring{$\Homol\bC\cX_{G}$}{HgCXG} and \texorpdfstring{$\Homol\bC\cX_{c,G}$}{HgCXcG}}\label{ewtiguhiufhriuewdwedqwdewd}

In this section we introduce two further coarse homology theories $\Homol \bC\cX_{G}$ and $\Homol \bC\cX_{c,G}$. They are the analogues of the colimit theories considered in \cite{unik} which are an important ingredient in the proof of an isomorphism result for the assembly map \cite{fj}.

From  \eqref{freferwwrevervwervrvdseewewe} and \eqref{freferwwrevervwervrvds1wewewewe} 
  in the case of the trivial group we  obtain  functors  \begin{equation}\label{adsvoihqjoi3rffsdadfc}
 \BC\times  \ndCcat\to \Ccat\, , \quad (X, \bC)\mapsto \bCtsm(X)
\end{equation} and 
 \begin{equation}\label{adsvoihqjoi3rffsdadfcccc}
 \BC\times \ndeCcat \to \Ccat\, , \quad (X,\bC)\mapsto \bCtsmc(X)
\end{equation}

For a group $G$  
%
we have a fully faithful functor $$G\BC\to \Fun(BG,\BC)$$
which sends a $G$-bornological {coarse} space to the underlying bornological coarse space with the $G$-action by automorphisms.
This functor just forgets the condition that $\cC^{G}_{X}$ must be cofinal in $\cC_{X}$ (see Example \ref{egiojweogergregwefwerfwrefw}). In formulas we will not write this functor explicitly.

We now fix $  \bC$ in $\Fun(BG,\nCcat )$ and adopt Assumption \ref{ogjkporgwegregrefwf}. 
By composition we get the functors
\begin{equation}\label{dfsvwrgfvsfdvsdfvsdfvsdfvsdfv}
\btCtsm \colon G\BC  \to  \Fun(BG,\BC)  \xrightarrow{\eqref{adsvoihqjoi3rffsdadfc}} \Fun(BG,\Ccat)
\end{equation}
%
and
  \begin{equation}\label{dfsvwrgfvsfdvsdfvsdfvsdfvsdfvccc}
\btCtsmc \colon G\BC \to \Fun(BG,\BC )  \xrightarrow{\eqref{adsvoihqjoi3rffsdadfcccc}} \Fun(BG,\Ccat)\, . 
\end{equation}
We now use the maximal crossed product functor
$$-\rtimes G\colon \Fun(BG,\nCcat)\to \nCcat$$ from \cite[Defn.~5.9]{crosscat} and its restriction to unital $C^{*}$-categories.
\begin{ddd}\label{toiwjopgregwrewg}\mbox{}
\begin{enumerate}
\item
We define the functor 
$$ \bCGtsm \coloneqq \btCtsm(-) \rtimes G\colon G\BC\to \Ccat\ .$$
 \item \label{werijgowegwergwerfewrf}  We 
 define the functor 
$$ \bCGtsmc \coloneqq \btCtsmc(-) \rtimes G\colon G\BC\to \Ccat\, .$$\end{enumerate}
\end{ddd}
 
 We use the maximal crossed product since it has the necessary exactness properties   needed for the verification of   the axioms for a coarse homology theory below.
It should not be confused with the reduced crossed product
 \cite[Sec.\ 12]{cank} which  occurs in the present paper   in the statement of Proposition \ref{qergioqefweqwecqcasdc}.\ref{egijogrgergewrgwegwrewg2} below.


Recall the  Definition \ref{wetgiojwergergergw}.\ref{wetgiojwergergergw1} of a weak coarse equivalence.
 \begin{lem}\label{igjqorgqrqewfqef}
 $ \bCGtsm $ and   $ \bCGtsmc$ send  weak coarse equivalences  to unitary equivalences.
 \end{lem}
\begin{proof}
We  give the argument in the  case of  $ \bCGtsm $. The case  of    $ \bCGtsmc$ is analoguous.
If $X\to X^{\prime}$ is a weak coarse equivalence, then by the 
Lemma \ref{refhfiueareferfdscvadsc}  the  morphism  $\btCtsm (X)\to\btCtsm
(X')$  in $\Fun(BG,\Ccat)$ is a weak equivalence in the sense of   \cite[Defn.~7.6]{crosscat},  i.e., a unitary equivalence after forgetting the $G$-action. Furthermore, by   \cite[Prop.~7.9]{crosscat} the functor $-\rtimes G$ sends weak equivalences to unitary equivalences. Consequently, 
$\bCGtsm (X) \to \bCGtsm (X')$ is a unitary equivalence.
\end{proof}

\begin{lem}\label{efiowgrewgwegrg}
The functors $\bCGtsm $ and $\bCGtsmc $  are $u$-continuous.
\end{lem}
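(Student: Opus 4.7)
The plan is to reduce $u$-continuity of the crossed-product-valued functors to the already established $u$-continuity of $\bCgtsm$ and $\bCgtsmc$ in Lemma~\ref{ergiuhwergwergergwegergerg}, and then to invoke preservation of filtered colimits by the crossed product functor $-\rtimes G$. Throughout I read the statement as asserting $u$-continuity of $\bCGtsm$ and $\bCGtsmc$ (taking the repeated symbol in the lemma as a typo).

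First I would fix $X$ in $G\BC$ and consider the system of inclusions $\btCtsm(X_U)\to \btCtsm(X)$ in $\Fun(BG,\Ccat)$ indexed by $U$ in $\cC_X^G$. By construction \eqref{dfsvwrgfvsfdvsdfvsdfvsdfvsdfv}, the underlying $C^{*}$-category of $\btCtsm(Y)$ is $\bCgtsm(Y)$ (after forgetting the $G$-action), and the same for $X_{U}$. Filtered colimits in the $1$-category $\Fun(BG,\Ccat)$ are created by the forgetful functor $\Fun(BG,\Ccat)\to \Ccat$, with the $G$-action on the colimit induced termwise. Hence Lemma~\ref{ergiuhwergwergergwegergerg} yields that the canonical morphism
\[
\colim_{U\in \cC_X^G}\btCtsm(X_U)\longrightarrow \btCtsm(X)
\]
is an isomorphism in $\Fun(BG,\Ccat)$.

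Next I would apply $-\rtimes G$. The key technical input is that the crossed product $-\rtimes G\colon \Fun(BG,\nCcat)\to \nCcat$ preserves filtered colimits. This should follow from the concrete description of the crossed product \cite[Defn.~5.9]{crosscat} together with the explicit description of filtered colimits in $\Ccat$ from \cite[Lem.~4.6]{crosscat}: in both cases morphism spaces are closures of unions of nested dense subspaces along the diagram, and the algebraic formation of twisted sums $\sum_g A_g[g]$ commutes with such unions and subsequent norm closures. Granting this, the composite equivalences
\[
\colim_{U\in \cC_X^G}\bCGtsm(X_U)\simeq \Big(\colim_{U\in \cC_X^G}\btCtsm(X_U)\Big)\rtimes G\simeq \btCtsm(X)\rtimes G=\bCGtsm(X)
\]
establish $u$-continuity of $\bCGtsm$. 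The argument for $\bCGtsmc$ is identical after replacing $\btCtsm$, $\bCgtsm$ by $\btCtsmc$, $\bCgtsmc$ throughout, using the $u$-continuity of $\bCgtsmc$ from Lemma~\ref{ergiuhwergwergergwegergerg}.

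The main obstacle is justifying that the crossed product commutes with filtered colimits; this is the only non-formal point in the argument. If a general statement to this effect is not available in \cite{crosscat}, I would establish it directly by combining the universal property of $-\rtimes G$ with the description of filtered colimits in $\Ccat$ as closures of unions of morphism spaces, noting that if a subspace is dense in each $\Hom$-space of $\btCtsm(X_{U})$ then the induced subspace of twisted finite sums is dense in each $\Hom$-space of $\btCtsm(X_{U})\rtimes G$.
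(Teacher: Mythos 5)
Your plan follows the same overall route as the paper's proof: establish the colimit identity for $\btCtsm$ objectwise and then commute $-\rtimes G$ past the filtered colimit. However, there is a genuine gap at the first step.

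You write that ``the underlying $C^{*}$-category of $\btCtsm(Y)$ is $\bCgtsm(Y)$ (after forgetting the $G$-action).'' This is not correct. The composition \eqref{dfsvwrgfvsfdvsdfvsdfvsdfvsdfv} defines $\btCtsm(X)$ by applying the trivial-group functor \eqref{adsvoihqjoi3rffsdadfc} to the underlying bornological coarse space $\tilde X$ together with the datum $\tilde\bK\subseteq\tilde\bC$, so the underlying $C^{*}$-category of $\btCtsm(X)$ is the \emph{non-equivariant} Roe category $\bCtsm(\tilde X)$, not the category $\bCgtsm(X)$ of $G$-equivariant controlled objects. The distinction between these two is precisely the point of the two parallel constructions in Sections~\ref{fiughiufvfdcasdcscdscca} and~\ref{ewtiguhiufhriuewdwedqwdewd}.

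The consequence is that what you need is the trivial-group instance of Lemma~\ref{ergiuhwergwergergwegergerg}, which gives an isomorphism $\colim_{U\in\cC_X}\bCtsm(\tilde X_U)\to\bCtsm(\tilde X)$ with the colimit indexed by the full poset $\cC_X$ of entourages. The $u$-continuity condition you are verifying, however, is phrased in terms of the colimit over the invariant entourages $\cC_X^G$. To bridge the two you must invoke the cofinality of $\cC_X^G$ in $\cC_X$, which is Condition~\ref{igwoegwergergwrgrg} of Definition~\ref{trbertheheht}. Your argument does not mention this, and without it the identification of the two colimits is unjustified; this is exactly the point the paper's proof calls out explicitly.

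On the second step, you correctly flag commuting $-\rtimes G$ with the filtered colimit as the non-formal part, but you frame it as a general statement about the crossed product preserving filtered colimits. The paper does something more targeted: it uses the isomorphism $-\rtimes G\cong\colim_{BG}\circ L$ from \cite[Prop.~7.3]{crosscat}, checks via the explicit description of $L$ and of the colimit in $\Ccat$ (\cite[Lem.~4.6]{crosscat}) that $L$ commutes with this particular colimit, and then interchanges the colimits over $BG$ and $\cC_X^G$. If you pursue your version you should at minimum reduce to a citable preservation statement rather than leaving it as a plausibility sketch.
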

\begin{proof}
We  give the argument in the  case of  $ \bCGtsm $. The case  of    $ \bCGtsmc$ is analoguous.
{First of all note} that colimits in functor categories are calculated objectwise.
Let $X$ be in $G\BC$.  
By Lemma \ref{ergiuhwergwergergwegergerg} and since $\cC_{X}^{G}$ is cofinal in $\cC_{X}$ (see Definition \ref{trbertheheht}.\ref{igwoegwergergwrgrg}) we get
\begin{equation}\label{fvdsiuhqeiufvdfsvsdfv}\colim_{U\in \cC_{X}^{G}} \btCtsm (X_{U})\cong  \btCtsm (X) \end{equation}
 in $\Fun(BG,\Ccat)$. 
Applying $-\rtimes G$ gives
$$( \colim_{U\in \cC_{X}^{G}}  \btCtsm(X_{U}))\rtimes G\cong    \btCtsm(X)\rtimes G \,  . $$
It remains to show that the canonical functor
\begin{equation}\label{vefwvwevfvwrvwrvw}  \colim_{U\in \cC_{X}^{G}} (\btCtsm(X_{U})\rtimes G) \to  (\colim_{U\in \cC_{X}^{G}}  \btCtsm(X_{U}))\rtimes G
\end{equation}
is an isomorphism.  This follows from \cite[Prop. 7.25]{KKG}
which states that the maximal crossed product functor preserves filtered colimits of systems whose structure maps    are injective on objects. In our case the structure maps are identities.
 \end{proof}

Let $X$ be in $G\BC$, and let $(\cY,Z)$ be a complementary pair. 
\begin{lem}\label{qeriogoegregerwrec}
We have a diagram 
 \begin{equation}\label{sdfvoijoifevjsdfvfdvsfvfdvccc}
\xymatrix{ \bCGtsmc (\cY\cap Z)\ar[r]\ar[d]&  \btCtsmc ((\cY\cap Z)\subseteq Z)\rtimes G \ar[d] \ar[r]&  \bCGtsmc( Z)\ar[d]\\ \bCGtsmc (\cY)\ar[r]&  \btCtsmc( \cY \subseteq X) \rtimes G\ar[r]& \bCGtsmc (X)}
\end{equation}
and, provided $\bM\bC$ is idempotent complete, also a diagram
\begin{equation}\label{sdfvoijoifevjsdfvfdvsfvfdv}
\xymatrix{ \bCGtsm (\cY\cap Z)\ar[r]\ar[d]&  \btCtsm ((\cY\cap Z)\subseteq Z)\rtimes G \ar[d] \ar[r]&  \bCGtsm( Z)\ar[d]\\ \bCGtsm (\cY)\ar[r]&  \btCtsm( \cY \subseteq X) \rtimes G\ar[r]& \bCGtsm (X)}
\end{equation} 
where the left horizontal maps are filtered colimits of unitary equivalences and the right squares are excisive.
\end{lem}
\begin{proof} 
We  give the argument in the  case of  $ \bCGtsm $. The case  of    $ \bCGtsmc$ is analoguous.
Let $\cY=(Y_{i})_{i\in I}$.
For every $i$ in $I$ we have a diagram 
$$\xymatrix{   \btCtsm(Y_{i}\cap Z)\ar[r]\ar[d]&  \btCtsm((Y_{i}\cap Z)\subseteq Z) \ar[d] \ar[r]&  \btCtsm( Z)\ar[d]\\ \btCtsm (Y_{i} )\ar[r]&  \btCtsm( Y_{i} \subseteq X) \ar[r]&  \btCtsm ( X)}$$
in $\Fun(BG,\nCcat)$. The left horizontal  morphisms are weak equivalences by Lemma~\ref{oirjwoeigwrgwerg}. 
We apply the functor $-\rtimes G$ and get 
$$\xymatrix{  \btCtsm(Y_{i}\cap Z)\rtimes G\ar[r]\ar[d]&  \btCtsm((Y_{i}\cap Z)\subseteq Z)\rtimes G \ar[d] \ar[r]&  \btCtsm ( Z)\rtimes G\ar[d]\\  \btCtsm (Y_{i} )\rtimes G\ar[r]&  \btCtsm( Y_{i} \subseteq X)\rtimes G \ar[r]&  \btCtsm( X)\rtimes G}$$
in $\nCcat$.   Since the crossed product sends weak equivalences to unitary equivalences by   \cite[Prop.~7.9]{crosscat}, the left horizontal functors are unitary equivalences. 
We now form the colimit over $i$ in $I$ and use the notation $ \bCGtsm$ at the outer corners.
Note that the  evaluation of a functor on a big family is by definition the colimit of its values on the members.
We furthermore use that $I$ is filtered in order to remove the colimit symbol for the constant  $I$-diagrams in the right part.
We obtain {the diagram}
$$\xymatrix{ \bCGtsm(\cY\cap Z) \ar[r]\ar[d]&\colim_{i\in I}\big( \btCtsm((Y_{i}\cap Z)\subseteq Z)\rtimes G\big) \ar[d] \ar[r]&\bCGtsm(Z) \ar[d]\\ \bCGtsm (\cY  ) \ar[r]&\colim_{i\in I}\big( \btCtsm( Y_{i} \subseteq X)\rtimes G\big) \ar[r]& \bCGtsm( X) }\ .$$
In order to  get Diagram~\eqref{sdfvoijoifevjsdfvfdvsfvfdv} must  interchange the order of the colimits and the functor $-\rtimes G$ in the middle column. 
Since the structure maps of the systems are injective on objects  this is again justified by   \cite[Prop. 7.25]{KKG}.
  
It remains to show that the  right square is excisive.
  {To this end we} note that the right square is obtained from the   square
  \begin{equation}\label{vfdqrjovfevfsvdfv}
\xymatrix{ \btCtsm((\cY\cap Z)\subseteq Z) \ar[d] \ar[r]& \btCtsm ( Z)\ar[d]\\\ \btCtsm( \cY \subseteq X) \ar[r]& \btCtsm( X)}
\end{equation}
in $\Fun(BG,\nCcat)$ by applying $-\rtimes G$.
The square \eqref{vfdqrjovfevfsvdfv} is excisive after forgetting the $G$-action by Lemma \ref{eruigzheiugwergwergvwergr}. We finally use \cite[Thm.~8.14]{crosscat} stating that $-\rtimes G$ preserves excisive squares  whose right corners are unital.
\end{proof}

\begin{lem}\label{ewiogwtgwergergergwe}
If $\bC$ admits countable AV-sums, then 
the functors $ \bCGtsm$ and $ \bCGtsmc$ send  flasque $G$-bornological coarse spaces to flasque $C^{*}$-categories.
\end{lem}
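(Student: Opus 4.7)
The strategy is to replay the argument of Lemma~\ref{ergoijeorgqgrwefqewfqwef} inside $\Fun(BG,\Ccat)$ and then descend via the crossed product functor $-\rtimes G$.

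Let $X$ in $G\BC$ be flasque and let $f\colon X\to X$ implement the flasqueness. The endofunctor
$$S(C,\rho,\mu)\coloneqq \big(\bigoplus_{\nat}C,\,\oplus_{\nat}\rho,\,\oplus_{n\in \nat}f^{n}_{*}\mu\big)$$
introduced in the proof of Lemma~\ref{ergoijeorgqgrwefqewfqwef} is assembled from the $G$-equivariant data $f,\rho,\mu$ in a manifestly $G$-natural way. Hence it lifts to an endomorphism $S$ of $\btCtsm(X)$ (respectively $\btCtsmc(X)$) inside $\Fun(BG,\Ccat)$. All verifications that the image is well-defined -- smallness (using that $f^n$ is proper and $\bK$ is an ideal in $\bC$), preservation of locally finite support in the lf-case, and the control estimate $V\coloneqq \bigcup_{n\in\nat}(f^{n}\times f^{n})(U)\in \cC_{X}$ from Condition~\ref{qriugoqergreqwfqewfqef}.\ref{qrgioqrgfregqfqefqwefqef1} -- are identical to those in the cited proof and do not invoke cofinality of $G$-invariant entourages.

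By the same bookkeeping of orthogonal-sum representatives as in the cited proof we obtain the identity
$$\id_{\btCtsm(X)}\oplus \big(\btCtsm(f)\circ S\big)\ =\ S$$
in $\Fun(BG,\Ccat)$. Since $f$ is close to $\id_{X}$, Lemma~\ref{refhfiueareferfdscvadsc} yields a unitary natural isomorphism $u\colon \btCtsm(f)\to \id_{\btCtsm(X)}$ whose components $u_{(C,\rho,\mu)}=\id_{C}$ are tautologically $G$-equivariant; hence $u$ lives in $\Fun(BG,\Ccat)$. Combining with the previous equality gives an equivariant unitary isomorphism $\id_{\btCtsm(X)}\oplus S\simeq S$ in $\Fun(BG,\Ccat)$.

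It remains to transport this flasqueness witness through $-\rtimes G\colon \Fun(BG,\Ccat)\to \Ccat$. Inspection of the construction in \cite[Defn.~5.9]{crosscat} shows that the crossed product leaves objects unchanged, so the pointwise orthogonal sum of two endofunctors is compatible with the convolution-type composition defining the crossed product; in particular orthogonal sums of endofunctors are preserved by $-\rtimes G$, and an equivariant unitary natural isomorphism between functors induces a unitary natural isomorphism between the associated crossed-product functors (in the same spirit as \cite[Prop.~7.9]{crosscat}). Consequently $S\rtimes G$ is an endofunctor of $\bCGtsm(X)$ (respectively of $\bCGtsmc(X)$) satisfying $\id\oplus(S\rtimes G)\simeq S\rtimes G$, which witnesses flasqueness. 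The main technical point -- and the only place where anything genuinely new happens relative to Lemma~\ref{ergoijeorgqgrwefqewfqwef} -- is this last compatibility of $-\rtimes G$ with pointwise orthogonal sums of endofunctors and with equivariant unitary natural transformations.
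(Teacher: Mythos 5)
You run into a genuine gap at the central step. You claim the endofunctor $S$ "lifts to an endomorphism of $\btCtsm(X)$ inside $\Fun(BG,\Ccat)$", i.e.\ that $S$ is a strictly $G$-equivariant endofunctor of the $C^{*}$-category with $G$-action $\btCtsm(X)$. The paper explicitly states the opposite: $S$ is "not equivariant". The reason is that $S$ is built from a choice of orthogonal sums $\bigoplus_{\nat}C$ for each object $C$, and the chosen representative of $\bigoplus_{\nat}(gC)$ will in general not equal $g\big(\bigoplus_{\nat}C\big)$. (Also note that in this setting the objects live in the non-equivariant category $\bCtsm(X)$ with $G$ acting by category automorphisms, so there is no $\rho$-datum to lean on as in Lemma~\ref{ergoijeorgqgrwefqewfqwef}; your formula $S(C,\rho,\mu)$ is for the wrong category.) Since $S$ is only equivariant up to canonical unitary isomorphism, you cannot directly feed it into $-\rtimes G$ as a morphism in $\Fun(BG,\Ccat)$, and the step where you deduce an endofunctor $S\rtimes G$ of $\bCGtsm(X)$ does not go through as written.

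The paper fixes this with the machinery of \emph{weakly} equivariant functors: it invokes \cite[Lem.~9.9]{cank} to extend the orthogonal-sum endofunctor $\bigoplus_{\nat}\id_{\bC}$ to a weakly equivariant pair $(\bigoplus_{\nat}\id_{\bC},\theta)$, where $\theta$ is a unitary cocycle recording the failure of strict equivariance, uses the same cocycle to extend $S$ to a weakly equivariant endofunctor $(S,\theta)$ of $\btCtsm(X)$, and then applies \cite[Prop.~7.12]{crosscat}, which produces the desired endofunctor $(S,\theta)\rtimes G$ of the crossed product. This weak-equivariance apparatus is the genuinely new ingredient compared to Lemma~\ref{ergoijeorgqgrwefqewfqwef}, and it is exactly what your proof omits by asserting strict equivariance. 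The rest of your argument (the formula $\id\oplus S\rtimes G\cong S\rtimes G$ and the appeal to $f$ being close to $\id_{X}$ via Lemma~\ref{igjqorgqrqewfqef}) is in line with the paper once the equivariance issue is repaired.
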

     \begin{proof} 
     We  give the argument in the  case of  $ \bCGtsm $. The case  of    $ \bCGtsmc$ is analoguous.
We argue as in the proof of Lemma \ref{ergoijeorgqgrwefqewfqwef}. Let $X$ be in $G\BC$ flasque with flasqueness implemented by $f\colon X\to X$. We construct the endofunctor
     $S\colon \bCtsm (X)\to \bCtsm(X)$ (not equivariant) as in the proof of  Lemma \ref{ergoijeorgqgrwefqewfqwef}. 
      
      Using \cite[Prop.~11.4]{cank}  we extend the endofunctor $\bigoplus_{\nat} \id_{\bC}$ to a weakly equivariant
     functor $(\bigoplus_{\nat} \id_{\bC},\theta)$ (see \cite[Def.~4.1]{crosscat}). The same cocycle then provides an extension 
      $(S,\theta)$ of $S$ to  a weakly equivariant endofunctor.
 By \cite[Prop.~7.12]{crosscat} we get an endofunctor
 $$(S,\theta)\rtimes G\colon  \bCGtsm (X)\to  \bCGtsm(X)\, .$$
 The Equality~\eqref{veflknwlefvwevwevwev} induces an equality (adopting appropriate choices for the sum)
\begin{equation}\label{eroifjqoifcwecdscasdc}
\id_{ \bCGtsm(X)}\oplus   \bCGtsm(f)\circ    (S\rtimes G)=S\rtimes G\, .
\end{equation}
 By the Lemma \ref{igjqorgqrqewfqef}  and since $f$ is close to the identity we have a unitary isomorphism $  \bCGtsm(f)\circ  (S\rtimes G)\simeq S\rtimes G$. Consequently, we have a unitary isomorphism 
 $$\id_{ \bCGtsm(X)}\oplus  (S\rtimes G) \cong S\rtimes G$$
 showing that 
  $ \bCGtsm(X)$ is flasque. 
     \end{proof}

Let $\Homol \colon \nCcat\to \bM$ be a functor. 
\begin{ddd}\label{woeihgwegewrgregw9}\mbox{}
We define the functors 
$$\Homol\bC\cX_{G} \colon G\BC\xrightarrow{ \bCGtsm } \Ccat\xrightarrow{\Homol} \bM$$ 
and
$$\Homol\bC\cX_{c,G}\colon G\BC\xrightarrow{ \bCGtsmc } \Ccat\xrightarrow{\Homol}\bM\,.$$
\end{ddd}

The subscript $G$ should indicate that the functors involve taking $G$-coinvariants which is here explicitly realized in terms of  the crossed product.
The subscript $c$ indicates that 
the construction involves the continuous functor $\bCtsmc$. But note that  the functors  introduced in Definition \ref{woeihgwegewrgregw9}  are not continuous except in degenerate cases (see Example~\ref{ex_GmaxGmax_not_continuous}).

Recall Definition \ref{wetgjwieoferferfefwre} of a finitary homological functor 
and 
Definition \ref{wefuihqfwefwffqwefefwq} of an equivariant coarse homology theory.
\begin{theorem} \label{ergoijewowerfrefrefwf} Assume:
\begin{enumerate}
\item $\Homol$ is a  finitary homological functor. 
  \item $\bC$ admits countable AV-sums and  is effectively additive (and $\bM\bC$ is idempotent complete 
 in the case of $\Homol\bC\cX_{G}$).
\end{enumerate} 
  Then
$\Homol\bC\cX_{G}$  and  $\Homol\bC\cX_{c,G}$  are equivariant coarse homology theories which in addition send weak coarse equivalences to equivalences.
\end{theorem}

 \begin{proof} 
We give the proof for $\Homol\bC\cX_{G}$. The case of $\Homol\bC\cX_{c,G}$ is analoguous.
Note that $\bM$ is stable since it is the target of a homological functor.
We now verify the conditions listed in Definition \ref{wefuihqfwefwffqwefefwq}.

\begin{enumerate}
\item\label{ergiojowergrefwfwrefwer} $\Homol\bC\cX_{G}$ is coarsely invariant: Indeed, by Lemma \ref{igjqorgqrqewfqef} the functor $\bCGtsm$ sends    weak coarse equivalences to unitary equivalences. Further, $\Homol$ sends unitary equivalences to equivalences. So $\Homol\bC\cX_{G}$  sends (even weak) coarse equivalences to equivalences.
\item $\Homol\bC\cX_{G}$ is $u$-continuous: This follows from Lemma \ref{efiowgrewgwegrg} stating that $\bCGtsm$ is $u$-continuous and the fact that $\Homol$ preserves filtered colimits, since we assume that it is finitary.
 \item $\Homol\bC\cX_{G}$ is excisive: We clearly have $\Homol\bC\cX_{G}(\emptyset)\simeq 0_{\bM}$.
   Let  now $X$ be in $G\BC$, and let $(\cY,Z)$ be an invariant  complementary pair on $X$.    
We apply $\Homol$ to the Diagram~\eqref{sdfvoijoifevjsdfvfdvsfvfdv} {and get}
 $$\xymatrix{\Homol\bCGtsm(\cY \cap Z)\ar[r]\ar[d]&\Homol ( \btCtsm((\cY\cap Z)\subseteq Z)\rtimes G) \ar[d] \ar[r]& \Homol\bCGtsm ( Z)\ar[d]\\\Homol\bCGtsm(\cY )\ar[r]&\Homol (\btCtsm ( \cY \subseteq X) \rtimes G)\ar[r]&\Homol\bCGtsm ( X)}$$
 Since $\Homol$ preserves filtered colimits and sends unitary equivalences to equivalences,
 the left horizontal maps are filtered colimits of equivalences and hence equivalences. 
 The right square is the image under the homological functor $\Homol$ of an excisive square and hence a cocartesian square. 
Consequently, the outer square is cocartesian.
\item  $\Homol\bC\cX_{G}$ annihilates flasques:   Indeed, if $X$ is flasque, then $  \bCGtsm (X)$ is flasque by Lemma 
\ref{ewiogwtgwergergergwe}. Then $\Homol\bC\cX_{G}(X)\simeq 0$ by   \cite[Prop.~13.13]{cank}.\qedhere
\end{enumerate}
\end{proof}

\begin{ass}\label{wergijwoergerwfwr}{\em 
From now it will be a standing hypothesis that $\bC$ in $\Fun(BG,\nCcat)$ is effectively additive
and admits countable AV-sums, when we talk about the coarse homology theories
$\Homol\bC\cX_{c}^{G}$ and  $\Homol\bC\cX_{c,G}$.

When we consider
 $\Homol\bC\cX^{G}$ or  $\Homol\bC\cX_{G}$, then
 we will assume that $\bC$ admits countable AV-sums and that $\bM\bC$ is idempotent complete.
 
Additional assumptions will be stated if necessary.}
\hB
 \end{ass}

    

\begin{rem}
Note that the equivariant  coarse homology theories from Theorem \ref{ergoijewowerfrefrefwf} send all  weak coarse  equivalences to equivalences.  In contrast, by 
  Proposition  \ref{ergijweogerwfwerfwerf}  the functor
  $\Homol \bC\cX^{G}_{c}$  sends  weak coarse  equivalences to equivalences only under {additional} assumptions. 
\hB
\end{rem}
   

\begin{ex}\label{ex_GmaxGmax_not_continuous}
 Let  $X$ be  in $G\BC$. We assume that there exists a maximal entourage $U$ in $\cC_{X}$, and that all coarse components  of $X$  (Definition \ref{gheruierfefersfsdfsggs}) are bounded, i.e., the equivalence classes for the equivalence relation $U$   belong to $\cB_{X}$. Then 
    the canonical projection $X\to \pi_{0}(X)_{min,min}$ is a weak coarse equivalence, where $\pi_{0}(X)$ denotes the $G$-set of coarse components of $X$.
    In this case
    $$ \Homol\bC\cX_{c,G}(X)\to  \Homol\bC\cX_{c,G}(\pi_{0}(X)_{min,min})$$
  is an equivalence. 
  
A concrete example of such a space is $G_{max,max}$. 
We have $\pi_{0}(G_{max,max})\cong *$. 
   Hence   $\Homol\bC\cX_{c,G}(G_{max,max})\to \Homol\bC\cX_{c,G}(*)$ is an equivalence. Note that the target of this morphism  is given by $\Homol\bC\cX_{c,G}(*) \cong \Homol(\bC^u \rtimes G)$, which is in general non-trivial. If $G$ is infinite, then  $G_{max,max}$ does not have any invariant, locally finite subsets. 
This shows that the coarse homology theory $\Homol\bC\cX_{c,G}$ is in general not continuous. See  also Example~\ref{ex_Gmaxmax}.
\hB
\end{ex}
 
    Recall   the Definition \ref{wqroijwoidfewdewqdqwdqew}  of a strong equivariant coarse homology theory.  
\begin{prop}\label{wergwjiofeewfewrqggg}
The coarse homology theories $\Homol \bC \cX_{G}$ and $\Homol \bC \cX_{c,G}$ are strong. 
\end{prop}
\begin{proof}
We give the argument in the case of $\Homol \bC \cX_{G}$. The case of $\Homol \bC \cX_{c,G}$ is analoguous.
Let $X$  in $G\BC$ be weakly flasque (Definition \ref{qriugoqergreqwfqewfqefr324r34r3r34r34r34r}) with weak flasqueness implemented by  $f\colon X\to X$. We apply $\Homol$ to the Equation~\eqref{eroifjqoifcwecdscasdc}  
obtained in the proof of Lemma \ref{ewiogwtgwergergergwe}.
Using the additivity of $\Homol$ shown in \cite[Prop.\ 13.11]{cank}  we get 
$$\id_{\Homol \bC \cX_{G}(X)}+\Homol \bC \cX_{G}(X)(f) \circ \Homol(S\rtimes G) =\Homol (S\rtimes G) $$
in $\End_{\bM}(\Homol \bC \cX_{G}(X))$.
From Condition \ref{qriugoqergreqwfqewfqefr324r34r3r34r34r34r}.\ref{toijgworegewrgwer}
we get 
$\Homol \bC \cX_{G}(X)(f) \simeq  \id_{\Homol \bC \cX_{G}(X)}$, and the resulting relation 
$\id_{\Homol \bC \cX_{G}(X)}+  \Homol(S\rtimes G) =\Homol (S\rtimes G)  $ implies $\Homol \bC \cX_{G}(X)\simeq 0$.
\end{proof}

   \section{Calculations}\label{qwrgioeqrgefefvsfvfdvfds}

In this section we calculate the values of some of the  functors introduced in the preceding sections on bornological coarse spaces derived from $G$-sets. 
 The calculations for $\Homol \bC\cX^{G}_{c}$ are of particular relevance
 for the results stated in detail in Section~\ref{sec_CPfunctors}.


We fix $ \bC$ in $\Fun(BG,\nCcat)$  and assume that it is effectively additive. 
Applying the functor from  \eqref{dfsvwrgfvsfdvsdfvsdfvsdfvsdfvccc} to $X$ in $G\BC$ we obtain the object $\btCtsmc(X)$ in $\Fun(BG,\Ccat)$.  Instead of the reduced crossed product as in Definition \ref{toiwjopgregwrewg} we  can form, as a start, the algebraic crossed product $\btCtsmc(X)\rtimes^{\alg}G$    in $\Clincat$, see \cite[Defn.~5.1]{crosscat}.
On the other hand we have the uncompleted Roe category  $\Cgtsmc(X \otimes G_{can,min})$ in $\Clincat$, introduced in Definition \ref{jergoiwregwerfrefwfref},  where $G_{can,min}$ is defined in Example~\ref{etwgokergpoergegregegwergrg} and the symmetric monoidal structure $\otimes$ on $G\BC$  is explained in Definition~\ref{defn_symmetric_monoidal_GBC}.

 \begin{prop}\label{rgiowegrerewef}
Assume:
 \begin{enumerate}
 \item \label{efrvgervfvfdvsdfvsdfvs}
 Every locally finite subset of $X$ has a maximal entourage.
  \item \label{wkrhjorhbgrhdfghfgh9} $\bC$ admits orthogonal  AV-sums of cardinality  $|G|$.
\end{enumerate} Then
there exists a unitary equivalence
$$\phi\colon \btCtsmc(X) \rtimes^{\alg}G\to \Cgtsmc(X \otimes G_{can,min})$$
 in $\Clincat$ which is natural in $X$.
\end{prop}
\begin{proof}  
By Assumption \ref{wkrhjorhbgrhdfghfgh9}  
for every object $C$ in $\bC$ we can fix an orthogonal  AV-sum $(\bigoplus_{g\in G} gC,(e^{C}_{g})_{g\in G})$.  
 For any $X$ in $G\BC$ satisfying  Condition \ref{efrvgervfvfdvsdfvsdfvs} we will then construct a morphism 
 $$\phi \colon \btCtsmc(X )\rtimes^{\alg}G\to \Cgtsmc(X \otimes G_{can,min})$$  in $\Clincat$ which turns out to be natural in $X$. 
  To show that it is a unitary equivalence we will furthermore construct a  morphism
  $$\psi \colon \Cgtsmc(X \otimes G_{can,min})\to \btCtsmc(X )\rtimes^{\alg}G$$  in $\Clincat$  (with no naturality requirements)
and unitary  isomorphisms 
$$u \colon \psi \circ \phi \xrightarrow{\cong} \id_{ \btCtsmc(X )\rtimes^{\alg}G} \, , \quad 
 v \colon \phi \circ \psi \xrightarrow{\cong} \id_{\Cgtsmc(X \otimes G_{can,min})}\, .$$

Condition \ref{efrvgervfvfdvsdfvsdfvs} implies that  every morphism in $\btCtsmc(X)$ is controlled. 
Indeed, if $(C, \mu)$ and $ (C', \mu')$ are objects of $\btCtsmc(X )$, then by Definition \ref{rthoperthrtherthergtrgertgertretr}.\ref{qwriuheivnfvkjvvcd9} $$L \coloneqq \supp(C, \mu)\cup \supp  (C', \mu')$$   is a locally finite  subset of $X$. By Condition \ref{efrvgervfvfdvsdfvsdfvs} we can find an entourage $U$ in $\cC_{X}$ such that $
U\cap (L\times L)$ is maximal in the coarse structure on $L$ induced from $X$.  Then every morphism   $(C, \mu)\to  (C', \mu')$   in  $\btCtsmc(X ) $  is $U$-controlled (see Example \ref{qeroigqreffqewfqewfqewf}). 

Note that for a general $G$-bornological space $Y$ the morphisms 
between objects in $\btCtsmc(Y )$ are limits of sequences of controlled morphisms, but in general not controlled themselves. The fact that the morphisms are themselves controlled in our present situation 
will be used below in order to see that the morphism defined by \eqref{qfefqewfewfewfqdwedd} is controlled.

   
In order to construct $\phi$, by \cite[Lem.  5.7]{crosscat} 
it suffices to construct a covariant representation $(\sigma,\pi)$ of $\btCtsmc(X )$ on $\Cgtsmc(X \otimes G_{can,min})$.
We construct the morphism $\sigma:\btCtsmc(X )\to \Cgtsmc(X \otimes G_{can,min})$ as follows:
\begin{enumerate}
\item objects:  $\sigma$ sends the object $(C,\mu)$ in $ \btCtsmc(X )$ to
$$\sigma(C,\mu):=\big(\bigoplus_{g\in G} gC,\rho, \tilde \mu\big)$$ in
$\Cgtsmc(X \otimes G_{can,min})$. The  
 first entry uses the choice of the  orthogonal AV-sums adopted at the beginning of this proof.  The projection-valued measure   $\tilde\mu$ is determined by $$\tilde\mu(\{(x,g)\}) \coloneqq e^{C}_{g}  g\mu(g^{-1}(\{ x\}))e_{g}^{C,*}$$ for all $(x,g)$ in $X\times G$. Finally,    the unitary cocycle  $\rho \coloneqq (\rho_{h})_{h\in G}$  is defined by   
 \begin{equation}\label{qfefqewfewfewfqdwedd}
\rho_{h} \coloneqq \sum_{g\in G} he^{C}_{g}e_{hg}^{C,*}\colon \bigoplus_{g\in G} gC\to h\bigoplus_{g\in G}gC\ .
\end{equation} 
In order to show that $\rho$ and $\tilde \mu$ exist as morphisms in $\bM\bC$ we use \cite[Lem. 7.8]{cank}.
One checks that  $\rho$ satisfies the cocycle conditions, and that $\tilde\mu$ is equivariant (see Definitions \ref{etgiowergergwegwerg}.\ref{ewrgiuhefdcsdcsdc} and \ref{gwergoiegujowergwergwergwerg}.\ref{etgijwogerfrewrr}).
One furthermore checks that $\phi(C,\mu)$ is locally finite.
\item morphisms: $\sigma$ sends $A \colon (C,\mu)\to  (C',\mu')$ in $\btCtsmc(X ) $  to 
 \begin{equation}\label{qfefqewfewfefwefwefwwfqdwedd1}\sigma(A) \coloneqq \sum_{h\in G} e^{C'}_{h } hA e_{h}^{C,*}\, .\end{equation} where we 
 use  \cite[Lem. 7.8]{cank} 
 in order to see that this formula defines a multiplier morphism. 
  One checks in a straightforward manner that
$\sigma(A)$ is equivariant (see Definition \ref{etgiowergergwegwerg}.\ref{ewbiojhiobewrvcdfvsdv}).
In order to check that $\sigma(A)$ is controlled observe that 
  if  
 $A$ is $U$-controlled for $U$ in $\cC_{X}$, then $\sigma(A)$ is  $ G( U \times \diag(G))$-controlled, and that $ G( U \times  \diag(G) )$ is a coarse entourage of $X\otimes G_{can,min}$.
\end{enumerate}
We next construct  the family  
$\pi= (\pi(h))_{h\in G}$ of natural transformations
$\pi(h):\sigma\to h^{*}\sigma$ by setting  \begin{equation}\label{wergrefdfvsdfvsdfvsdfvsdfvsdfv}
\pi(h)_{(C,\mu)}:=\sum_{g\in G} e^{hC}_{g}e^{C,*}_{gh}:\bigoplus_{g\in G} gC \to  \bigoplus_{g}ghC
\end{equation}
for every $(C,\mu)$ in $\Ob( \btCtsmc(X ) )$.
 We again use   \cite[Lem. 7.8]{cank} 
 in order to see that this formula defines a unitary  multiplier morphism. As a morphism 
 $$\big(\bigoplus_{g\in G} gC,\rho, \tilde \mu\big)\to \big(\bigoplus_{g\in G} ghC,\rho, \tilde \mu\big)$$  
 in $\Cgtsmc(X \otimes G_{can,min}) $
 the morphism $\pi(h)_{(C,\mu)}$ is 
 $G(\diag(X)\times \{(e,h^{-1})\})$-controlled, and $G(\diag(X)\times \{(e,h^{-1})\})$ is a coarse entourage of $X\otimes G_{can,min}$.
   One checks that
 $(\pi(h)_{(C,\mu)})_{(C,\mu)\in \Ob( \btCtsmc(X ) )}$ is a natural transformation from $\sigma$ to $h^{*}\sigma$, and that the family
 $(\phi(h))_{h\in G}$ satisfies the cocycle condition $\ell^{*}\pi(h)\pi(\ell)=\pi(h\ell)$ for all $h,\ell $ in $G$.

We can now read off the formulas for $\phi$:
\begin{enumerate}
\item objects:  
$$\phi(C,\mu) \coloneqq \big(\bigoplus_{g\in G} gC,\rho, \tilde \mu\big)\, .  $$
  \item morphisms: We will use {the} pair notation for morphisms as in \cite[Defn.~5.1]{crosscat}.   \begin{equation}\label{qfefqewfewfewfqdwedd1}\phi(A,g) \coloneqq \sum_{h\in G} e^{C'}_{hg^{-1}} hA e_{h}^{C,*}\, .\end{equation} 
 \end{enumerate}
By an inspection one checks that $\phi$ is  is natural $X$.

Next we construct $\psi$.
\begin{enumerate}
\item objects: Let $(C,\rho,\tilde\mu)$ be an object of $ \Cgtsmc(X \otimes G_{can,min})$. Since $\bC$ is effectively additive by assumption 
and $(C,\rho,\tilde \mu)$ is locally finite we can 
 choose an image $(\tilde C,\tilde u)$  in $\bM\bC$ of the projection ${\tilde \mu}(X\times \{e\})$. Then we define $\mu\colon\cP_{X}\to \Proj(\tilde C)$ to be the restriction of ${\tilde u}^{*} {\tilde \mu} {\tilde u}$ to
$ X\times \{e\}$. We now set
\begin{equation}\label{eq_defn_psi_computation_alg}
\psi(C,\rho,\tilde \mu) \coloneqq (\tilde C,\mu)\, .
\end{equation}
One checks that $(\tilde C,\mu)$ is locally finite.
\item morphisms:   Let $A \colon (C,\rho,\tilde \mu)\to (C',\rho',{\tilde \mu}')$ be a morphism in $\Cgtsmc(X \otimes G_{can,min})$.
For every $g$ in $G$ we set
\begin{equation}\label{ebowjoervefvsfevsfdvvs}
A_{g}\coloneqq (g^{-1}{\tilde u}')^{*}\rho'_{g^{-1}} A{\tilde u} \colon \tilde C\to g^{-1} \tilde C'\, .
\end{equation}
If $A$ is $U$-controlled for $U$ in $\cC_{X\otimes G_{can,min}}$, then $A_{g}$ is $U\cap ((X\times X)\times {(e,e)})$-controlled for all $g$ in $G$. Furthermore, if $A_{g}\not=0$, then  $g^{-1} \in \pr_{G\times G}(U)[\{e\}]$. 
In view of the definition of the coarse structure on $X\otimes G_{can,min}$ we have $ \pr_{G\times G}(U)\in \cC_{G_{can,min}}$ and therefore  $  \pr_{G\times G}(U)[\{e\}] \in \cB_{G_{can,min}}$, i.e., this set is finite.
We now define
\begin{equation}\label{rvqrcsdcadscscsdcascc}
\psi(A) \coloneqq \sum_{g\in G}(A_{g},g) \, . 
\end{equation}
We just have seen that this  sum has only finitely many non-trivial terms and therefore defines a morphism from $(\tilde C,\mu)$ to $(\tilde C', \mu')$ in 
$\btCtsmc(X)\rtimes^{\alg}G$. One checks that this construction is compatible with the composition and the involution.
\end{enumerate}

We now construct the unitary   isomorphisms $u$ and $v$.
\begin{enumerate}
\item $u\colon \psi\circ \phi\xrightarrow{\cong}\id$:
Let $(C,\mu)$ be an object in $\btCtsmc(X)$. Then $\phi(C,\mu) = (\bigoplus_{g\in G}gC,{\rho,\tilde{\mu}})$.
In the construction of $\psi$ we have  chosen the image $(\widetilde{\bigoplus_{g\in G}gC}, {\tilde{u}})$ of $\tilde{\mu}(X\times \{e\})$
{such that}
$$u_{(C,\mu)} \coloneqq e^{C,*}_{e} {\tilde{u}} \colon \widetilde{\bigoplus_{g\in G}gC}\to C$$
is a unitary  multiplier isomorphism.  
The family $u \coloneqq (u_{(C,\mu)})_{(C,\mu)\in \Ob(\btCtsmc(X))}$ provides the desired unitary isomorphism. 
\item  $v\colon \phi\circ \psi\xrightarrow{\cong}\id$: Let $(C,\rho,\tilde \mu)$ be an object of $\Cgtsmc(X  \otimes G_{can,min})$. 
In the construction of $\psi$ we have fixed an image $(\tilde C,\tilde u)$ of ${\tilde \mu}(X\times \{e\})$ such that $\psi(C,\rho,\tilde \mu) = (\tilde C, \mu)$, see~\eqref{eq_defn_psi_computation_alg}.
Then $\phi(\tilde C,\mu) = (\bigoplus_{g\in G}g \tilde{C},\rho,\tilde{\tilde{\mu}})$.

Using the invariance  condition for $\tilde \mu$  stated in Definition \ref{gwergoiegujowergwergwergwerg}.\ref{etgijwogerfrewrr}, one  checks that   $(g\tilde C,\rho_{g}^{-1} g(u))$ represents an image of ${\tilde \mu}(X\times \{g\})$. Since $(C,\rho,\mu)$ is determined on points, by Lemma \ref{eriogjqwefqewfewfeqdewdq}
 the object $C$ is the orthogonal AV-sum of the  images of the family of projections $\tilde \mu(X\times \{g\})$. 
Therefore, setting
\begin{equation}\label{eq_unitary_iso_v_defn_ug}
u_{g} \coloneqq \rho_{g}^{-1} g(u)\, ,
\end{equation}   using \cite[Lem. 7.8]{cank} we get a unitary multiplier  isomorphism
\begin{equation}
\label{rgiowegrerewef_unitary_iso_v}
v_{(C,\rho,\mu)} \colon \bigoplus_{g\in G} g \tilde{C} \xrightarrow{\sum_{g \in G} u_g e^{\tilde{C},*}_{g}} C\,.
\end{equation}
The family
 $(v_{(C,\rho,\mu)})_{(C,\rho,\mu)\in \Ob( \Cgtsmc(X \otimes G_{can,min}))}$
 is the desired unitary isomorphism~$v$.\qedhere
\end{enumerate}
\end{proof}

We consider $  \bC$ in $\Fun(BG,\nCcat)$  and its full subcategory $\bC^{u}$ of unital objects  in $\Fun(BG,\Ccat)$. Under the condition that  $\bC^{u}$    is (finitely) additive, in 
  \cite[Constr.\ 19.6]{cank} and  \cite[Def.\ 19.9]{cank} we   introduced the functors
 \begin{equation}\label{qwefktbvrtfavkoavd}\bC^{u}[-]\colon G\Set \to {\Fun(BG,\Ccat)}\, , \quad \bC^{u}[-]\rtimes_{r}G\colon G\Set\to \Ccat\, .
\end{equation}
In Assertion \ref{egijogrgergewrgwegwrewg2} of the following proposition we use the reduced crossed product from \cite[Thm. 12.1]{cank}.
\begin{prop}\label{qergioqefweqwecqcasdc}
We assume that  $ \bC$ is effectively additive and $\bC^{u}$ is additive
\footnote{For the first two statements we could drop the essential additivity assumption. 
}.
For $X$ in $G\Set$ we have the following natural isomorphisms (or {unitary} equivalence, respectively):
\begin{enumerate}
\item\label{item_computation_2} $ \btCtsmc(X_{min,max})\cong \bC^{u} [X]$ in $\Fun(BG,\Ccat)$.
\item\label{egijogrgergewrgwegwrewg1}  
$ \bCGtsmc(X_{min,max})\cong \bC^{u} [ X]\rtimes G$ in $\Ccat$.
\item\label{egijogrgergewrgwegwrewg2} $ \bCgtsmc(X_{min,max}\otimes G_{can,min})\simeq \bC^{u} [ X]\rtimes_{r} G$ in $\Ccat$  (under the additional assumption that  
 $\bC$ admits orthogonal  AV-sums of cardinality  $|G|$)
\end{enumerate}
\end{prop}
\begin{proof}  
 {This isomorphism}  in Assertion \ref{item_computation_2} follows from a comparison of the construction \cite[Constr. 19.6]{cank} of $\bC^{u}[X]$ with the definition of
$ \btCtsmc(X_{min,max})$, as we explain in the following.
 Recall that an object of $ \btCtsmc(X_{min,max})$ is a pair $(C,\mu)$ of an object $C$ of $\bC$ 
and a function $\mu\colon \cP_{X}\to \Proj(C)$. 
Since $(C,\mu)$ is locally finite the projection  $\mu(\{x\})$ belongs to $\bC$ for every $x$ in $X$. Furthermore, since $X_{min,max}$ has the maximal bornology,  the set $ \supp(C,\mu)=\{x\in X\mid \mu(\{x\}) \not=0\}$ is finite so that $\sum_{x\in X} \mu(\{x\})=\id_{C}$ is a morphism in $\bC$. In particular, $C$ belongs to $\bC^{u}$. On objects the asserted isomorphism sends $(C,\mu)$ to  $(C,(\mu(\{x\}))_{x\in X})$ of $\bC^{u}[\tilde X]$.

Since $X_{min,max}$ has the minimal   coarse structure the condition that {a morphism} $A\colon (C,\mu)\to (C',\mu')$ is controlled corresponds to the condition $\mu'(\{x'\})A\mu(\{x\})=0$ for all $x,x'$ in $X$ with $x\not=x$ {as} required for a morphism in $ \bC^{u}[X]$.  
The asserted isomorphism identifies $A\colon (C,\mu)\to (C',\mu')$ as above with the same morphism  now considered as a morphism in $A:
(C,(\mu(\{x\}))_{x\in X})\to (C',(\mu'(\{x\}))_{x\in X})$.

Assertion \ref{egijogrgergewrgwegwrewg1} immediately follows from
Assertion \ref{item_computation_2} and the Definition \ref{toiwjopgregwrewg}.\ref{werijgowegwergwerfewrf}.

In order to show Assertion \ref{egijogrgergewrgwegwrewg2} note that we have a faithful morphism $  \btCtsmc(X_{min,max})\to \bC$
which sends $(C,\mu)$ to $C$ and is the obvious inclusion on the level of morphisms. Since $-\rtimes_{r}G$ preserves faithful (or equivalently, isometric) morphisms by \cite[Thm. 12.1]{cank} 
we can define the reduced norm on $ \btCtsmc(X_{min,max})\rtimes^{\alg}G$ using the induced covariant representation  
of $ \btCtsmc(X_{min,max}) $ on $ \bL^{2}(G,\bW^{\mathrm{nu}}\bC)$ (see \cite[Defs. 2.35 \& 12.2]{cank}).
By an inspection one observes that the formulas   \cite[(12.4)]{cank} and \cite[(12.5)]{cank} for this covariant representation coincide with the formulas  
  \eqref{qfefqewfewfefwefwefwwfqdwedd1}   and  \eqref{wergrefdfvsdfvsdfvsdfvsdfvsdfv}.
  We conclude that the norm induced  from  $\bCgtsmc(X \otimes G_{can,min})$  through the unitary equivalence $\phi$  from Proposition \ref{rgiowegrerewef} 
  is the norm of the reduced crossed product. 
  Therefore the unitary equivalence $\phi$ extends  by continuity to the unitary  equivalence as asserted in 
 Assertion \ref{egijogrgergewrgwegwrewg2}.
\end{proof}

\begin{rem}
In general the maximal and the reduced crossed products do not coincide. We have a canonical morphism from the maximal crossed product to the reduced one. Combining this with Proposition \ref{qergioqefweqwecqcasdc}.(\ref{egijogrgergewrgwegwrewg1} \& \ref{egijogrgergewrgwegwrewg2})
we get a functor
\begin{equation}\label{ewfiuqhiufqwefqwfqwefweff}
\bCGtsmc(X_{min,max})\to \bCgtsmc(X_{min,max}\otimes G_{can,min})
\end{equation}
which in general is not an equivalence. 
This  stays in contrast to
\cite[Cor.~5.4]{unik} which asserts that the analogue of this morphism in the case of controlled objects in left-exact $\infty$-categories
is always  an equivalence.

If $G$ is amenable, then as a consequence of \cite[Thm.\ 12.27]{cank}  the functor in \eqref{ewfiuqhiufqwefqwfqwefweff} is an isomorphism.
\hB
\end{rem}

\section{Transfers}\label{qeriughqifqwcqeqc}

In  \cite[Defn.~2.27]{coarsetrans} we introduced an extension
\begin{equation}\label{ergegwergewrgewrgefwefwerf}
\iota\colon G\BC\to G\BC_{\tr}
\end{equation}  of the category of $G$-bornological coarse spaces to an  $\infty$-category $G\BC_{\tr}$. 
It  has the same objects as $G\BC$.  Its $1$-morphisms are spans
$$\xymatrix{&W\ar@{->>}[dl]_{w}\ar[rd]^{f}&\\X&&Y}$$
where $f$ is a morphism in $G\BC$ which is in addition bornological, see Definition \ref{etgiohjroifgjqrofiqfewfqew}(\ref{rfoijoiffqefefeq}), and where  $w$ is a bounded covering (see  Definition \ref{wergijowergergrefwef} below).  
The higher morphisms are diagrams which exhibit compositions of spans. We refer to  \cite[Defn.~2.27]{coarsetrans} for more details. 
They will not be needed in the present paper.

We consider a functor $E\colon G\BC\to \bM$.

\begin{ddd}\label{poqjkpowckwdcqwcqwcwc}  $E$  admits transfers if there exists a functor $E_{\tr}\colon G\BC_{\tr}\to \bM$ and an equivalence
$E_{\tr}\circ \iota\simeq E$.
\end{ddd}

In the following it is useful to model $\infty$-categories by quasi-categories, and to distinguish the notation for  categories $\cC$ from the notation $\Nerve(\cC)$ for the corresponding quasi-categories.  Basic references for this model are   \cite{htt,HA}.

Recall that the category $\Ccat$ is the underlying $1$-category of a  strict $(2,1)$-category  $ \Ccat_{2,1}$ whose $2$-isomorphisms are unitary isomorphisms of functors. We have a  $2$-nerve functor $ \Nerve_{2}$ from $(2,1)$-categories to quasi-categories. This functor    sends a $(2,1)$-category to the coherent nerve of the fibrant simplicial category obtained by applying the usual nerve functor to the morphism groupoids.  
 We have a canonical embedding $$\ell_{2}\colon \Nerve(\Ccat)\to \Nerve_{2}(\Ccat_{2,1})\, .$$ 

Assume that $  \bC$  in $\Fun(BG, \nCcat)$ 
 admits all very small AV-sums and is therefore in particular effectively additive.
  We then have  the functor
 $$\bCgtsmc\colon G\BC\to  \Ccat$$ defined in Definition \ref{eihioqwefqwfewfqwefqewf}.\ref{eghqfijewofewfewfqfedqdwedwqed}.
We can consider the functor
$$\Nerve(G\BC)\xrightarrow{\Nerve(\bCgtsmc)} \Nerve(\Ccat)\xrightarrow{\ell_{2}} \Nerve_{2}(\Ccat_{2,1})\, .$$
We want to assert that this functor admits transfers. 
The more precise formulation of Definition \ref{poqjkpowckwdcqwcqwcwc} requires that there is
a functor
$\bCgtsmctr \colon G\BC_{\tr}\to \Nerve_{2}(\Ccat_{2,1})$ such that  \begin{equation}\label{t24bkjbfjkbnvkfvfdvaf}
\ell_{2}\circ \Nerve(\bCgtsmc)\simeq \bCgtsmctr\circ \iota\, .
\end{equation}

The following is the main theorem of the present section.
\begin{theorem}\label{ethgiuwieogfergwgrergweg} If $  \bC$  admits all very small AV-sums, then
 the functor $$\ell_{2}\circ \Nerve(\bCgtsmc) \colon \Nerve(G\BC)\to  \Nerve_{2}(\Ccat_{2,1})$$
admits transfers.
\end{theorem}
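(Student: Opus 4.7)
The strategy is to construct $\bCgtsmctr$ explicitly, following the template used for analogous coarse homology theories in \cite{desc,equicoarse,unik}. Since $G\BC_{\tr}$ is modelled as a quasi-category of spans with a bounded covering on the left and a bornological morphism on the right, the data of such a functor amount to: the values $\bCgtsmc(X)$ on objects, a functor $\bCgtsmc(X)\to\bCgtsmc(Y)$ for each span $X\xleftarrow{w}W\xrightarrow{f}Y$ of the above form, and coherent unitary isomorphisms modelling the compositions of spans. The essential new ingredient is a transfer functor $w^{*}\colon \bCgtsmc(X)\to\bCgtsmc(W)$ associated to a bounded covering $w$. The functor on a span will be defined as the composition $f_{*}\circ w^{*}$, so that on the subcategory $\iota(\Nerve(G\BC))$ of left-identity spans it reproduces the pushforward $f_{*}$ of Definition \ref{eihioqwefqwfewfqwefqewf}; this yields \eqref{t24bkjbfjkbnvkfvfdvaf} for free.

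The construction of $w^{*}$ is modeled on the base-change/pullback construction used in the proof of Proposition \ref{ergijweogerwfwerfwerf}. For $(C,\rho,\mu)$ in $\bCgtsmc(X)$ we choose an image $(C_{x},u_{x})$ of each projection $\mu(\{x\})$ and, using that $\tilde \bK\subseteq \tilde \bC$ is $\infty$-additive by the assumption $\alcc$, we set
$$w^{*}(C,\rho,\mu) \coloneqq \Bigl(\bigoplus_{w'\in W}C_{w(w')},\, \rho',\, \mu'\Bigr)\,,$$
where $\rho'$ permutes the summands along the $G$-action on $W$ twisted by the cocycle $\rho$, and $\mu'(\{w'\})\coloneqq e_{w'}e_{w'}^{*}$ for the structure morphisms $(e_{w'})_{w'\in W}$, extended finitely additively to $\cP_{W}$. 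On morphisms $A$ the transfer is defined by the corresponding norm-convergent sum assembled via \cite[Cor.\ 4.1]{cank}. The four items to verify are: (a) that $w^{*}(C,\rho,\mu)$ is locally finite, which uses that preimages of bounded subsets of $X$ are bounded in $W$ and that the fibres of $w$ are uniformly bounded (both defining properties of a bounded covering); (b) that morphisms of controlled propagation are sent to controlled morphisms, using that $(w\times w)^{-1}$ of an entourage of $X$ is controlled by an entourage of $W$; (c) $*$-linearity and norm-continuity, giving an extension to the completions; (d) independence, up to canonical unitary isomorphism, from the choice of images of the projections.

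The coherence data then come from standard base-change isomorphisms: the composite $w_{2}^{*}\circ w_{1}^{*}$ is canonically unitarily isomorphic to $(w_{1}\circ w_{2})^{*}$, and for the composition of two spans via a pullback $W\times_{X}W'$ one obtains a unitary comparison between the two ways of evaluating which itself satisfies a cocycle condition with respect to triple compositions. The main obstacle is to upgrade this pointwise data into a genuine functor out of the quasi-category $G\BC_{\tr}$ with all higher coherences. I would handle this as in \cite[Sec.\ 3]{desc}: first organize the construction into a strict $(2,1)$-functor out of a $(2,1)$-categorical model of $G\BC_{\tr}$ (fixing once and for all a coherent system of choices of images of projections, so that base-change isomorphisms are computed from explicit formulas), and then apply the $2$-nerve $\Nerve_{2}$ to produce the desired functor of quasi-categories. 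The verification of strict $2$-functoriality on the $(2,1)$-categorical model is then a routine, if lengthy, diagram chase using the universal property of orthogonal sums together with the defining properties of bounded coverings and bornological morphisms.
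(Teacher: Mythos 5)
Your overall strategy mirrors the paper's: construct a transfer $w^*$ along bounded coverings, provide unitary coherence $2$-isomorphisms, and assemble the data into a functor of quasi-categories via the general machinery (the paper invokes \cite[Sec.~2.1]{coarsetrans}). However, there are two concrete gaps.

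First, you misstate the defining properties of a bounded covering. You claim local finiteness of $w^*(C,\rho,\mu)$ follows because ``preimages of bounded subsets of $X$ are bounded in $W$ and the fibres of $w$ are uniformly bounded.'' Neither of these is part of Definition~\ref{wergijowergergrefwef}: a bounded covering is required to be \emph{bornological} (images of bounded sets are bounded), not proper, and the finiteness condition is on the fibres of $\pi_{0}(B)\to\pi_{0}(X)$ for $B$ bounded in $W$, not on the fibres of $w$ itself (which can be infinite). The correct local-finiteness argument combines bornologicality of $w$, local finiteness of $\supp(C,\rho,\mu)$ in $X$, and the $\pi_{0}$-fibre bound; as written, your argument would appeal to properties $w$ does not have.

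Second, your transfer on objects sums over points $w'\in W$, which after discarding zero summands is equivalent to the paper's sum over $\pi_{0}(W)$; but you then say $w^*$ on a morphism $A$ is ``the corresponding norm-convergent sum,'' which underspecifies the construction in a way that hides the essential point. A controlled morphism $A$ is \emph{not} diagonal over the points of $X$; it is only block-diagonal over the coarse components $\pi_{0}(X)$. The paper's construction exploits this by first recording the component morphisms $A_{Y}\coloneqq u_{Y}^{C',*}Au_{Y}^{C}\colon C(Y)\to C'(Y)$ for $Y\in\pi_{0}(X)$, observing that $A_{Y',Y}=0$ for $Y'\neq Y$, and then transferring componentwise via the explicit formula \eqref{fdsvvdfvdsfvsfv}. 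A naive sum of point-by-point matrix coefficients does not converge to anything well-defined (the relevant family is not orthogonal, so \cite[Cor.\ 4.1]{cank} does not apply directly), so you need the coarse-component block structure; at the moment this key step is just asserted. A related issue: your proposed strictification to a strict $(2,1)$-functor by fixing choices of images once and for all does not work — $(wv)^{*}C$ and $v^{*}(w^{*}C)$ are images of projections on different objects and will in general only be canonically unitarily isomorphic, not equal — and the paper's remark preceding the theorem notes precisely this; the construction must live with nontrivial coherence $2$-isomorphisms $a_{v,w}$ and $b_{g,u}$ satisfying the seven conditions of the \cite[Sec.~2.1]{coarsetrans} machinery, which you acknowledge only implicitly.
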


\begin{rem}
In the proof of Theorem \ref{ethgiuwieogfergwgrergweg}  we need the existence of AV-sums in $\bC$  for arbitrary very small sets.  This is stronger then  the requirements for the existence of the coarse homology theory $\Homol \bC\cX^{G}
_{c}$ which are effective additivity and countable AV-sums.


We can not expect that the functor $\bCgtsmc$ with values in the ordinary category $\Ccat$ admits transfers. The problematic point is that the contravariant extension of the functor
to bounded coverings involves choices of orthogonal AV-sums on the level of objects.
As a consequence,  functoriality is satisfied only up to canonical $2$-isomorphisms.
 \hB
 \end{rem}

Before we start with the proof of  Theorem \ref{ethgiuwieogfergwgrergweg}
 we recall some further notions from coarse geometry which will be used in the arguments.

Let $W$ be in $G\Coarse$. Then we consider the  element 
$$U_{\pi_{0}(W)} \coloneqq \bigcup_{U\in \cC_{W}} U$$
of $\cP_{W\times W}$.
 Using the conditions on $\cC_{W}$ listed in Definition \ref{trbertheheht} we see that $U_{\pi_{0}(W)}$ is a $G$-invariant equivalence relation on $W$.
\begin{ddd}\label{gheruierfefersfsdfsggs}
The equivalence classes in $W$ for $U_{\pi_{0}(W)}$ are the coarse components of~$W$.
\end{ddd}
We let $\pi_{0}(W)$ denote the set of coarse components.  Since $U_{\pi_{0}(W)}$ is $G$-invariant, the group $G$ acts on the set
 $\pi_{0}(W)$.

Let $W,X$ be objects of $G\BC$ and $w\colon W\to X$  be a morphism between the underlying $G$-coarse spaces.
\begin{ddd}\label{wergijowergergrefwef}
$w$ is a bounded covering if:
\begin{enumerate}
\item For every coarse component $V$ of $W$ the restriction $f_{|V}\colon V\to f(V)$ is an isomorphism of coarse spaces from $V$ to a coarse component of $X$.
\item \label{eqrgoihjqoiefewfewdqewdew}The coarse structure of $W$ is generated by the entourages $U_{\pi_{0}(W)}\cap (w\times w)^{-1}(U)$ for all $U$ in $\cC_{X}$.
\item $w$ is bornological.
\item \label{tgoiwjegoierfrefweferfwerfre} For every $B$ in $\cB_{W}$ there exists a finite bound (depending on $B$) on the cardinality of the fibres of $\pi_{0}(B)\to \pi_{0}(X)$.
\end{enumerate}
\end{ddd}
We will indicate bounded coverings using the double-headed  arrow symbol $\xymatrix{\ar@{->>}[r]&}$.

We consider objects $W,V,U,Z$ in $G\BC$ and a square  
  $$\xymatrix{&W\ar[dr]^{f}\ar@{->>}[dl]_{w}&\\V\ar[dr]_{g}&&U\ar@{->>}[dl]^{u}\\&Z&}$$
  in $G\Coarse$.
  \begin{ddd}[{\cite[Def.\ 2.19]{coarsetrans}}]\label{riogoqefewfqfeweqd}
 The square is admissible if it has the following properties: \begin{enumerate}
 \item The square is cartesian in $G\Coarse$.
 \item $f$ and $g$ are proper and bornological.
 \item $u$ and $w$ are bounded coverings.
 \end{enumerate}
 \end{ddd}

\begin{proof}[Proof of Theorem \ref{ethgiuwieogfergwgrergweg}]
We must construct a functor $$\bCgtsmctr\colon G\BC_{\tr}\to  \Nerve_{2}(\Ccat_{2,1})$$ satisfying \eqref{t24bkjbfjkbnvkfvfdvaf}.
We can use the method explained
in  \cite[Sec.~2.1]{coarsetrans}. Following this reference, in order to construct  $\bCgtsmctr $ we must provide the following data:
\begin{enumerate}
\item We start with the functor $\bCgtsmc\colon G\BC\to \Ccat$.
\item For every bounded covering $w\colon W\to Z$ we must provide a $1$-morphism $$w^{*}:\bCgtsmc(Z)\to \bCgtsmc(W)\, .$$ 
\item For every two composable bounded coverings $w\colon W\to Z$ and $v\colon V\to W$ we must provide a $2$-isomorphism
$$a_{v,w}\colon (w\circ v)^{*}\Rightarrow  v^{*}\circ w^{*}\, .$$
\item For every admissible square
$$\xymatrix{&W\ar[dr]^{f}\ar@{->>}[dl]_{w}&\\
V\ar[dr]_{g}&&U\ar@{->>}[dl]^{u}\\&Z&}$$
we must provide a $2$-isomorphism
$$b_{g,u}\colon f_{*}\circ w^{*}\Rightarrow u^{*}\circ g_{*}\, .$$
\end{enumerate}
This data must satisfy the following conditions:
\begin{enumerate}
\item If the bounded covering $w\colon W\to Z$ is an isomorphism, then $w^{*}=w^{-1}_{*}$.
\item If the composable bounded coverings  $w\colon W\to Z$ and $v\colon V\to W$  are both isomorphisms, then
$a_{v,w}=\id$. This equality makes sense, because in this case $v^{*}w^{*}=(wv)^{*}$ by the strict functoriality of $\bCgtsmc$.
\item If $w\colon W\to Z$ and $v\colon V\to W$   and $u\colon U\to V$ are three composable bounded coverings, then 
the square of  $2$-isomorphisms
$$\xymatrix{(w\circ v\circ u)^{*}\ar@{=>}[rr]^{a_{vu,w}}\ar@{=>}[d]^{a_{u,wv}}&&(v\circ u)^{*}\circ w^{*}\ar@{=>}[d]^{a_{u,v}\circ w^{*}}\\
u^{*}\circ (w\circ v)^{*}\ar@{=>}[rr]^{u^{*}\circ a_{v,w}}&&u^{*}\circ v^{*}\circ w^{*}}$$
commutes.
\item In the case of an admissible square with morphisms $w,f,g,u$, if $u$ (and therefore $w$) is an isomorphism,
then we have  $b_{g,u}=\id$. This equality again makes sense since $$f_{*}w^{*}=f_{*}w^{-1}_{*}=u^{-1}_{*}g_{*}=u^{*}g_{*}$$
 by the strict functoriality of $\bCgtsmc$.
\item  In the case of an admissible square with morphisms $w,f,g,u$, if $f$ and $g$ are identities and therefore $w=u$,  then we have  $b_{g,u}=\id$.
\item For every diagram
\[
\xymatrix{&&&T\ar@{->>}[dl]_{t}\ar[dr]^{m}&&&\\
&&U \ar[dr]_{h}&&S\ar[dr]^{n}\ar@{->>}[dl]_{s}&&\\
& &&V \ar[dr]_{g}&&R\ar@{->>}[dl]_{r} &\\&&&&Z&&}
\] 
consisting of two admissible squares we have the relation 
\begin{equation}\label{g34glkjkgl999234g34}
b_{gh,r}=(b_{g,r}\circ h_*)(n_*\circ b_{h,s})\, .
\end{equation}
\item For every diagram  
\[\xymatrix{&&&T\ar@{->>}[dl]_{t}\ar[dr]^{m}&&&\\&&U\ar@{->>}[dl]_{u}\ar[dr]^{h}&&S \ar@{->>}[dl]^{s}&&\\&W \ar[dr]^{f}&&V\ar@{->>}[dl]^{v} && &\\&&Y&&&&}\] 
consisting of  two admissible squares we have the relation \begin{equation}\label{fce32999r32r23r5}
(a_{s,v}\circ f_*)b_{f,vs}= (s^*\circ b_{f,v})(b_{h,s}\circ u^*)(m_*\circ a_{t,u})\, .
\end{equation}
\end{enumerate}

%
 
 Let $X$ be in $G\BC$, and let   $(C,\rho,\mu)$    be an object of $ \bCgtsmc(X)$.  For a subset $Y$ of $X$
 we will use the notation $(C(Y),u^{C}_{Y})$ in order to denote a representative of the image in $\bM\bC$ of the projection $\mu(Y)$.  
Our language will be adapted to $\bC$ so that in the following morphisms in $\bM\bC$ will be called multiplier morphisms.
 
 We let  $( \bigoplus_{Y\in \pi_{0}(X)} C( Y), (e_{Y})_{Y\in \pi_{0}(X)})$ denote  a representative of the orthogonal AV-sum in $\bC$ of the family $(C(Y))_{Y\in \pi_{0}(X)}$.
 By   Lemma \ref{eriogjqwefqewfewfeqdewdq} applied to the partition $(Y)_{Y\in \pi_{0}(X)}$ of $X$
 we have   a unitary multiplier   isomorphism 
$$ \sum_{Y\in \pi_{0}(X)} u^{C}_{Y}e_{ Y}^{*}\colon  \bigoplus_{Y\in \pi_{0}(X)} C( Y)\xrightarrow{\cong} C\, .$$
 
Let $(C',\rho',\mu')$ be a second object, $( \bigoplus_{Y\in \pi_{0}(X)} C'( Y), (e'_{Y})_{Y\in \pi_{0}(X)})$ be the corresponding choice of an orthogonal AV-sum, and let $A\colon
(C,\rho,\mu)\to (C',\rho',\mu')$ be a morphism in $\bCgtsmc(X)$.
For $Y,Y'$ in $\pi_{0}(X)$ we define the multiplier morphism
$$A_{Y',Y} \coloneqq u_{Y'}^{C',*}Au^{C}_{Y} \colon C(Y)\to C'(Y') \, .$$
Since $A$ is $U_{\pi_{0}}(X)$-controlled we actually have $A_{Y',Y}=0$ if $Y\not=Y'$. 
In order to simplify the notation we set
$$A_{Y} \coloneqq A_{Y,Y}\, .$$

The morphism $A$ is uniquely determined by the family $(A_{Y})_{Y\in \pi_{0}(X)}$. 
In the following we discuss how one can reconstruct the morphism $A$  from a family  $(A_{Y})_{Y\in \pi_{0}(X)}$. 
Assume that a family of multiplier morphisms $(A_{Y}\colon C( Y )\to C'( Y))_{Y\in \pi_{0}(X)}$ is given. 
We say that $(A_{Y})_{Y\in \pi_{0}(X)}$ is bounded if 
 $\sup_{Y\in \pi_{0}(X)}\|A_{Y}\|<\infty$. 
 If $(A_{Y})_{Y\in \pi_{0}(X)}$ is bounded, then  using \cite[Lem. 7.8]{cank} we can define a multiplier morphism $A\colon C\to C'$  uniquely such that
$$\xymatrix{
\bigoplus_{Y\in \pi_{0}(X)} C( Y )\ar[rr]^{\oplus_{Y\in \pi_{0}(X)}A_{Y}}\ar[d]^{\cong}_{\sum_{Y\in \pi_{0}(X)} u^{C}_{Y}e^{*}_{ Y }}&& \bigoplus_{Y\in \pi_{0}(X)} C'( Y )\ar[d]_{\cong}^{\sum_{Y\in \pi_{0}(X)} u^{C'}_{Y}e^{\prime,*}_{ Y }}\\
C\ar[rr]^{A}&&C'
}$$
commutes.

By Definition \ref{gwergoiegujowergwergwergwerg}.\ref{etgijwogerfrewrr} we have the relation $\rho_{g}\circ \mu(Y)=g\mu(g^{-1}Y)\circ \rho_{g}$. We therefore get a canonical unitary multiplier \begin{equation}\label{refrewhiu3hfiuherifwerfrfw}
\rho_{g,Y} \coloneqq  (gu^{C}_{ g^{-1}Y})^{*} \circ \rho_{g}\circ u^{C}_{ Y} \colon C(Y)\to g(C(g^{-1}Y))\, .
\end{equation} 
We say that the family $(A_{Y})_{Y\in \pi_{0}(X)}$   is equivariant   if it  satisfies 
$$gA_{g^{-1}Y}\circ \rho_{g,Y} = \rho'_{g,Y}\circ A_{Y}$$ for all $g$ in $G$. If $(A_{Y})_{Y\in \pi_{0}(X)}$   is bounded  and equivariant,  then it   induces a morphism of $G$-objects $A\colon (C,\rho)\to (C',\rho')$ in ${(\bM\bC)}^{G}$, see Definition \ref{etgiowergergwegwerg}.   

Finally one must ensure that $A$ is controlled. We consider the projection-valued function
\begin{equation}\label{fewveoivhroivjoieverwcecwec}
\mu_{Y} \coloneqq u^{C,*}_{Y}\mu_{|\cP_{Y}} u^{C}_{Y} \colon \cP_{Y}\to \Proj(C(Y))\, .
\end{equation}
 Then  
 $(C(Y), \mu_{Y})$ is an object of $ \bCtsmc(Y)$.  
 Assume that $(A_{Y})_{Y\in \pi_{0}(X)}$ is an equivariant and bounded family.
 Let $U$ be  in $\cP_{X\times X}$. We say that  $(A_{Y})_{Y\in \pi_{0}(X)}$ is $U$-controlled if $A_{Y}$ is $U\cap (Y\times Y)$-controlled for all $Y$ in $\pi_{0}(X)$. 
 If  $(A_{Y})_{Y\in \pi_{0}(X)}$ is $U$-controlled, then the induced morphism  $A \colon (C,\rho,\mu)\to (C',\rho',\mu')$ is a $U$-controlled morphism in $\Cgtsmc(X)$.
 
 We again consider an equivariant and bounded family  $(A_{Y})_{Y\in \pi_{0}(X)}$. 
  We now assume that there exists a sequence $((A_{n,Y})_{Y\in \pi_{0}(X)})_{n\in \nat}$
of bounded equivariant  families  and a family $(U_{n})_{n\in \nat}$ in $\cC_{X}$ such that $(A_{n,Y})_{Y\in \pi_{0}(X)}$ is 
$U_{n}$-controlled for every 
 $n$ in $\nat$,    and 
$$\lim_{n\to \infty} \sup_{Y\in \pi_{0}(X)}\|A_{n,Y}-A_{Y}\|=0\, . $$ Let $A_{n} \colon (C,\rho,\mu)\to (C',\rho',\mu')$ be the  $U_{n}$-controlled morphism in $\Cgtsmc(X)$ defined by the family  $(A_{n,Y})_{Y\in \pi_{0}(X)}$.
Then {we have} $\lim_{n\to \infty} A_{n}=A$ in $\Hom_{\bC}(C,C')$, and consequently $A\colon (C,\rho,\mu)\to (C',\rho',\mu')$ is a morphism in $\bCgtsmc(X)$.

Note that if $A \colon (C,\rho,\mu)\to (C',\rho',\mu')$ is a morphism in $\bCgtsmc(X)$, then the associated family
$(A_{Y})_{Y\in \pi_{0}(X)}$ has all the properties needed for the reconstruction of $A$.

%
%
%


We now start with the construction of the data for $\bCgtsmctr$.

\begin{enumerate}
   \item The functor $\bCgtsmc \colon G\BC\to \Ccat$ is already constructed, see Definition \ref{eihioqwefqwfewfqwefqewf}.\ref{eghqfijewofewfewfqfedqdwedwqed}.
   \item Let $w \colon W\to Z$ be a bounded covering. We construct the functor
  $$w^{*} \colon \bCgtsmc(Z)\to\bCgtsmc(W)\, .$$
\begin{enumerate}
\item \label{rgiowegwergerfweffw} objects: Let $(C,\rho,\mu)$ be an object in {$\bCgtsmc(Z)$.} We choose an orthonormal AV-sum $(\bigoplus_{Y\in \pi_{0}(W)} C(w(Y)), (e^{w^{*}}_{Y})_{Y\in \pi_{0}(W)})$ in $\bC$.
In particular, if  $w$ is an isomorphism, then we choose the representative $(C, (u^{C}_{w(Y)})_{Y\in \pi_{0}(W)})$.
We then 
set
  $$w^{*}(C,\rho,\mu) \coloneqq (w^{*}C,w^{*}\rho,w^{*}\mu)\, ,$$ where the right-hand side has the following description.
\begin{enumerate}
\item We set 
$$w^{*}C \coloneqq \bigoplus_{Y\in \pi_{0}(W)} C(w(Y))\, .$$
 \item For a subset $B$ of $W$ and $Y$ in $\pi_{0}(W)$ we define the projection
$$(w^{*}\mu)(B)_{Y} \coloneqq \mu_{w(Y)}(w(B\cap Y)) $$ on $C(w(Y))$, see \eqref{fewveoivhroivjoieverwcecwec}.
We then define $(w^{*}\mu)(B)$ as the projection on $w^{*}C$  
given by
$$(w^{*}\mu)(B) \coloneqq \oplus_{Y\in \pi_{0}(W)}(w^{*}\mu)(B)_{Y}\, .$$
By Definition \ref{wergijowergergrefwef}.\ref{tgoiwjegoierfrefweferfwerfre}, if $B$ is in $\cB_{W}$, then the sum has finitely many non-zero summands.
One now checks that $w^{*}\mu$ is   a finitely additive, projection-valued measure which is full.  Furthermore, 
$(w^{*}C,w^{*}\mu)$ is locally finite.
 \item  For  $g$ in $G$, using \eqref{refrewhiu3hfiuherifwerfrfw},  
 we define
\[
\mathclap{
(w^{*}\rho)_{g}\colon \bigoplus_{Y\in \pi_{0}(W)} C( w(Y)) 
 \xrightarrow{\oplus_{Y}\rho_{g,Y}} \bigoplus_{Y\in \pi_{0}(W)} g(C( w(g^{-1}Y)))\cong g\bigoplus_{Y\in \pi_{0}(W)} C( w(Y)) \,.
}\]
  The last unitary  multiplier isomorphism is fixed by the commutativity of the diagram
\[\mathclap{\xymatrix{
&g(C( w(g^{-1}Y)))\ar[dl]_-{\tilde e}\ar[dr]^-{ge^{w^{*}}_{g^{-1}Y}}&\\\bigoplus_{Y\in \pi_{0}(W)} g(C(w(g^{-1}Y)))\ar[rr]_{\cong}&&g\bigoplus_{Y\in \pi_{0}(W)} C(w(Y))
}}\,.\]
  where $\tilde e$ is the canonical embedding of the summand with index $Y$.
\end{enumerate}
One now checks  in a  straightforward manner that 
\begin{equation*}
w^{*}(C,\phi,\rho)=(w^{*}C,w^{*}\phi,w^{*}\rho)\, .
\end{equation*}
 is a well-defined  object in $ \bCgtsmc(W)$.
\item morphisms: Let $A \colon (C,\rho,\mu)\to (C',\rho',\mu')$ be a morphism in $\bCgtsmc(Z)$.
  Then we let $w^{*}(A) \colon w^{*}C\to w^{*}C'$ be the morphism which is uniquely determined by the family 
  $(A_{w(Y)})_{Y\in \pi_{0}(W)}$. 
  We now explain why $w^{*}A$ is well-defined. First of all note that $(A_{B})_{B\in \pi_{0}(Z)}$ is bounded and equivariant. 
We conclude that $(A_{w(Y)})_{Y\in \pi_{0}(W)}$ is also bounded. Since  $w$ is equivariant we furthermore see that $   (A_{w(Y)})_{Y\in \pi_{0}(W)}$ is also equivariant. 
There exists a family $(A_{n})_{n\in \nat}$ of morphisms $A_{n} \colon (C,\rho,\mu)\to (C',\rho',\mu')$  in $\Cgtsmc(Z)$ and a family $(U_{n})_{\in \nat}$ in $\cC_{Z}$ such that
$A_{n}$ is $U_{n}$-controlled for every $n$ in $\nat$, and $\lim_{n\to \infty}A_{n}=A$ in $\Hom_{\bC}(C,C')$.  
Then $(A_{n,w(Y)})_{Y\in \pi_{0}(W)}$ is  $U_{\pi_{0}(W)}\cap (w\times w)^{-1}(U_{n})$-controlled  for every  $n$ in $\nat$. 
By  Definition \ref{wergijowergergrefwef}.\ref{eqrgoihjqoiefewfewdqewdew}  we have $ U_{\pi_{0}(W)}\cap (w\times w)^{-1}(U_{n})\in \cC_{W}$, and furthermore 
 $\lim_{n\to \infty} \sup_{Y\in \pi_{0}(W)} \|A_{n,w(Y)}-A_{w(Y)}\|=0$.
By the discussion above we see that $A$ is controlled. 
We have the following formula:
\begin{equation}\label{fdsvvdfvdsfvsfv}
w^{*}A=\sum_{B\in \pi_{0}(W)} e^{\prime, w^{*} }_{B} u^{C',*}_{w(B)}A u_{w(B)}^{C}e^{w^{*},*}_{B}\, .
\end{equation}
\end{enumerate}
One checks in a straightforward manner that $w^{*}$ is compatible with compositions and the involution.
%
%
%
\item We consider two composable bounded coverings $\xymatrix{V\ar@{->>}[r]^{v}&W\ar@{->>}[r]^{w}&Z}$. We construct the $2$-isomorphism
$$a_{v,w} \colon (w v)^{*}\Rightarrow  v^{*}w^{*}$$ as follows.
Let $(C,\rho,\mu)$ be an object of  $  \bCgtsmc (Z)$.
Then we define
\[
a_{v,w}(C,  \rho,\mu) \colon ((wv)^{*}C,(wv)^{*}\phi,(wv)^{*}\rho)\to (v^{*}w^{*}C,v^{*}w^{*}\phi,v^{*}w^{*}\rho)
\]
by
\begin{equation}\label{sfdvwrgfevfvsdfvsfdvsfdv1}
a_{v,w}(C,  \rho,\mu) \coloneqq \sum_{B\in\pi_{0}(V)} e_{B}^{v^{*}} u_{v(B)}^{w^{*}C,*} e_{v(B)}^{w^{*}} e_{B}^{(wv)^{*},*}\,.
\end{equation}
One checks by a calculation  that $a_{v,w}(C,\rho,\mu)$ is a unitary multiplier isomorphism. %
One furthermore checks that
$$a_{v,w} \coloneqq (a_{v,w}(C,\rho,\mu))_{(C,\rho,\mu)\in \Ob(\bCgtsmc(Z))}$$
is a natural  unitary  multiplier isomorphism. %
\item\label{qwefhjqriofqwecwefcqewfewcfqwfe}
We now consider an admissible square
$$\xymatrix{&W\ar[dr]^{f}\ar@{->>}[dl]_{w}&\\V\ar[dr]_{g}&&U\ar@{->>}[dl]^{u}\\&Z&}$$
Then must define the  transformation $$b_{g,u} \colon f_{*}\circ w^{*}\Rightarrow u^{*}\circ g_{*}\, .$$ 
Let $(C,\rho,\mu)$ be an object of  $\bCgtsmc(V)$. 
Then {we} define  
\[
b_{g,u}(C,\rho,\mu) \colon f_{*}w^{*}(C,\rho,\mu)\to u^{*}g_{*}(C,\rho,\mu)
\]
by
\begin{equation}\label{tgwjfirjfowfwerfewrfwefewrfrf}
b_{g,u}(C,\rho,\mu) \coloneqq \sum_{B\in \pi_{0}(U)} e^{u^{*}}_{B}u^{g_{*}C,*}_{u(B)}\sum_{A\in \pi_{0}(f^{-1}(B))} u^{C}_{w(A)}e_{A}^{w^{*},*}\,.
\end{equation}
One checks  by a calculation that 
 $b_{g,u}(C,\rho,\mu)$ is a unitary multiplier  isomorphism.
\end{enumerate}
We must verify that the 
  functors and $2$-isomorphisms constructed above satisfy the  seven conditions listed above. All these conditions are verified by straightforward (but lengthy) calculations. These calculations start with inserting the  explicit  definitions of the transformations given above and then 
proceed by  employing  the orthogonality relations for the families  of canonical isometries. 
\end{proof}

Let $\Homol\colon \Ccat\to \bM$ be a functor to an $\infty$-category $\bM$. 
\begin{theorem}\label{qwrgiherogwefewfqewfqwefqwfewf} If $\bC$ admits all very small AV-sums and    $\Homol$ sends unitary equivalences to equivalences, then the composition
\[
\Homol\circ \bCgtsmc \colon G\BC\to \bM
\]
admits transfers.
\end{theorem}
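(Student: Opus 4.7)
The plan is to reduce to Theorem \ref{ethgiuwieogfergwgrergweg} by post-composing with $\Homol$, provided we can first extend $\Homol$ across the embedding $\ell_{2}\colon \Nerve(\Ccat)\to \Nerve_{2}(\Ccat_{2,1})$.

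First I would argue that $\ell_{2}$ realizes the Dwyer--Kan localization of $\Nerve(\Ccat)$ at the collection of unitary isomorphisms of functors between $C^{*}$-categories. Since every $2$-morphism in $\Ccat_{2,1}$ is, by construction, a unitary natural isomorphism and hence invertible, the $2$-nerve $\Nerve_{2}(\Ccat_{2,1})$ is obtained from $\Nerve(\Ccat)$ by freely inverting precisely these $2$-cells. Under the hypothesis that $\Homol$ sends unitary equivalences to equivalences in $\bM$, the induced functor $\Nerve(\Homol)\colon \Nerve(\Ccat)\to \bM$ sends every such $2$-cell to an equivalence, and the universal property of the localization produces a factorization
$$\Nerve(\Homol)\simeq \tilde{\Homol}\circ \ell_{2}$$
for an essentially unique functor $\tilde{\Homol}\colon \Nerve_{2}(\Ccat_{2,1})\to \bM$.

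Given this extension, I would define the transfer extension of $\Homol\circ \bCgtsmc$ as the composition
$$(\Homol\circ \bCgtsmc)_{\tr}\coloneqq \tilde{\Homol}\circ \bCgtsmctr\colon G\BC_{\tr}\to \bM,$$
where $\bCgtsmctr$ is the functor provided by Theorem \ref{ethgiuwieogfergwgrergweg}. Combining the equivalence $\bCgtsmctr\circ \iota\simeq \ell_{2}\circ \Nerve(\bCgtsmc)$ from that theorem with the factorization of $\Nerve(\Homol)$ above, one obtains
$$(\Homol\circ \bCgtsmc)_{\tr}\circ \iota \simeq \tilde{\Homol}\circ \ell_{2}\circ \Nerve(\bCgtsmc) \simeq \Nerve(\Homol)\circ \Nerve(\bCgtsmc) = \Nerve(\Homol\circ \bCgtsmc),$$
which is precisely the extension condition required by Definition \ref{poqjkpowckwdcqwcqwcwc}.

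The only nontrivial point in this outline is the universal property of the $2$-nerve invoked in the first step. I expect it to be handled either by citation to the companion paper \cite{cank} (in the general form that the $2$-nerve of a strict $(2,1)$-category whose $2$-cells are all invertible computes the localization of its $1$-nerve at the corresponding $1$-morphisms) or by a direct argument using the simplicial model for $\Nerve_{2}(\Ccat_{2,1})$. This is the expected main obstacle; once dispatched, the rest of the argument is a formal diagram chase building on the substantive content already established in Theorem \ref{ethgiuwieogfergwgrergweg}.
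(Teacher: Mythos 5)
Your proposal is correct and takes essentially the same route as the paper: extend $\Homol$ across $\ell_{2}$ using a universal property of that map and then post-compose with $\bCgtsmctr$ from Theorem \ref{ethgiuwieogfergwgrergweg}. The paper's proof makes the universal-property step precise by a slightly longer chain: it invokes the simplicial model structure on $\Ccat$ whose weak equivalences $W$ are the unitary equivalences (cited to \cite{DellAmbrogio:2010aa} and \cite[Thm.~1.4]{startcats}), observes that $\Nerve(\Ccat)\to\Nerve^{\mathrm{coh}}(\Ccat)$ is a model for the Dwyer--Kan localization $\Nerve(\Ccat)[W^{-1}]$, and then identifies $\Nerve^{\mathrm{coh}}(\Ccat)$ with $\Nerve_{2}(\Ccat_{2,1})$ by an isomorphism of simplicial sets.

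One small imprecision to flag: a Dwyer--Kan localization inverts a class of $1$-morphisms, not $2$-cells. You phrase the key claim as localizing $\Nerve(\Ccat)$ "at the collection of unitary isomorphisms of functors" and "freely inverting precisely these $2$-cells"; the correct statement is that $\ell_{2}$ models the localization at the $1$-morphisms $W$ that become invertible once these $2$-cells are present, namely the unitary equivalences of $C^{*}$-categories. This is exactly the class $W$ for which $\Homol$ is assumed to produce equivalences, which is why the universal property applies. You correctly identified this localization statement as the only non-formal point, and the paper discharges it through the simplicial model structure rather than through a general theorem about $2$-nerves of $(2,1)$-categories.
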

\begin{proof} 
 The proof is analogous to the proof of  \cite[Prop.~2.63]{Bunke:2018aa} combined with the discussion around the diagram in \cite[(3.8)]{coarsetrans}.
 
 We consider $\Homol$ as a functor $\Nerve(\Homol) \colon \Nerve(\Ccat)\to \bM$.

As explained in \cite{DellAmbrogio:2010aa} or \cite[Thm.~1.4]{startcats} the   category $\Ccat$   has a simplicial model category structure whose weak equivalences  $W$ are the unitary equivalences.
Applying the coherent nerve functor we obtain an $\infty$-category $\Nerve^{\mathrm{coh}}(\Ccat)$. 
  Then we have a canonical inclusion functor $\Nerve(\Ccat)\to \Nerve^{\mathrm{coh}}(\Ccat)$.
This inclusion is an explicit model of the 
Dwyer--Kan localization
$\ell\colon \Nerve(\Ccat)\to \Nerve(\Ccat)[W^{-1}]$. 

We now construct the following  commutative diagram
\begin{equation}\label{sdvasljknkjwqvafsvasfdvasdv}
\xymatrix{
\Nerve(\Ccat)\ar@/^1cm/[rr]^{\Nerve(\Homol)}\ar[r]^-{\ell}\ar[dr]^{(1)}\ar[ddr]_{\ell_{2}}&\Nerve(\Ccat)[W^{-1}]\ar[d]_-{(2)}^-{\simeq}\ar@{..>}[r]^-{\tilde{\Homol}} &\bM\,.\\
&\Nerve^{\mathrm{coh}}(\Ccat) &
\\&\Nerve_{2}(\Ccat_{2,1})\ar[u]^-{(3)}_-{\cong}\ar@{-->}[ruu]_-{\Homol_{2,1}}&
}
\end{equation}

The morphism $(1)$ sends unitary equivalences to equivalences. Consequently we obtain the morphism $(2)$ and the filler of the corresponding triangle from the universal property of the  Dwyer--Kan localization $\ell$.
As explained above the morphism (2) is an equivalence. The morphism (3) is actually an isomorphism of simplicial sets. 

Since $\Homol$ sends unitary equivalences to equivalences
we obtain the factorization $\tilde \Homol$ and the upper triangle from the universal property of the Dwyer--Kan localization $\ell$.
We now define $\Homol_{2,1} \coloneqq \tilde \Homol\circ (2)^{-1}\circ (3)$.  The diagram \eqref{sdvasljknkjwqvafsvasfdvasdv} provides   an equivalence of functors 
$$\Nerve(\Homol) \simeq \Homol_{2,1}\circ \ell_{2} \colon \Nerve(\Cat)\to \bM\, .$$
This equivalence provides the right triangle in the following  diagram:
\begin{equation}\label{fwevpoiejvopevececqwec}
\xymatrix{
\Nerve(G\BC)\ar[d]^{\iota}\ar[rr]^-{\Nerve( \bCgtsmc)}&&\Nerve(\Ccat)\ar[d]^{\ell_{2}}\ar[r]^-{\Nerve(\Homol)}& \bM\,.\\
G\BC_{\tr}\ar[rr]^-{\bCgtsmctr}&&\Nerve_{2}(\Ccat_{2,1}) \ar[ur]_-{\Homol_{1,2}}&
}
\end{equation} 
The existence of the left square is exactly the statement of  Theorem \ref{ethgiuwieogfergwgrergweg}. 
The outer part of the diagram \eqref{fwevpoiejvopevececqwec} exhibits the required extension of $\Homol\circ \bCgtsmc$.
  \end{proof}

\section{Induction from and restriction to subgroups}\label{eoigjortwgwegergwegreg}

Let $G$ be a group and consider a subgroup  $H$. 
We consider a coefficient category  $ \bC$  in $\Fun(BG,\nCcat)$
and assume that it is effectively additive.
  Then we can form the functor  $$\bCgtsmc \colon G\BC\to \Ccat$$ (Definition \ref{eihioqwefqwfewfqwefqewf}.\ref{eghqfijewofewfewfqfedqdwedwqed}). 
Furthermore, we can restrict the $G$-action on $  \bC$  to an $H$-action (this will not be written explicitly) and define the functor
$$\bChtsmc \colon H\BC\to \Ccat\, .$$

We have an induction functor 
\begin{equation}\label{fqewpofkopfweqfqwefwqf}
\Ind_H^G\colon H\BC \to G\BC\,,
\end{equation}
see \cite[Sec.\ 6]{equicoarse} (it will be  recalled in the proof of Proposition \ref{woigjwergregwregwerg}).

  For the following statement we implicitly compose the functors $\bCgtsmc$ and
 $ \bChtsmc$ with the functor $\Ccat\to \Ccat_{2,1}$, where 
  $\Ccat_{2,1}$ is the $(2,1)$-category of $C^{*}$-categories, functors and unitary isomorphisms.

\begin{prop}\label{woigjwergregwregwerg}
If $\bC$ admits  AV-sums of size $|H\backslash G|$, then we have a natural equivalence
$$\phi \colon \bCgtsmc\circ \Ind_{H}^{G}\xrightarrow{\simeq} \bChtsmc$$
of functors from $H\BC$ to the $(2,1)$-category $\Ccat_{2,1}$.
\end{prop}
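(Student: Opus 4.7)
The plan is to construct mutually inverse (up to unitary isomorphism) functors relating the two sides and then check naturality in $X$. Recall that the induction functor from \eqref{fqewpofkopfweqfqwefwqf} is given on objects by $\Ind_H^G(X) = G \times_H X$ with the $G$-set structure coming from left multiplication on the first factor; the bornology is generated by subsets of the form $[e,B]$ for $B$ in $\cB_X$ together with their $G$-translates, and the coarse structure is generated by the $G$-translates of $(e\times e)_*(U)$ for $U$ in $\cC_X^H$. The key observation is that the canonical inclusion $i_X\colon X\hookrightarrow \Ind_H^G(X)$, $x\mapsto [e,x]$, is an $H$-equivariant map identifying $X$ with an $H$-invariant subset of $\Ind_H^G(X)$ whose $G$-translates yield a partition indexed by $H\backslash G$.

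First, I would construct the restriction functor $\phi_X\colon \bCgtsmc(\Ind_H^G X)\to \bChtsmc(X)$. On an object $(C,\rho,\mu)$ in $\bCgtsmc(\Ind_H^G X)$, choose a representative $(C',u)$ of the image of the projection $\mu(i_X(X))$. Define $\phi_X(C,\rho,\mu) \coloneqq (C',u^{*}\rho_{|H} u,u^*\mu_{|\cP_{X}} u)$, where we identify $X$ with $i_X(X)\subseteq \Ind_H^G X$; the resulting cocycle is an $H$-cocycle because $i_X(X)$ is $H$-invariant and $\mu$ is $G$-equivariant in the sense of \eqref{eq_mu_equivariant}. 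On morphisms, send $A\colon (C,\rho,\mu)\to(\tilde C,\tilde\rho,\tilde\mu)$ to $\tilde u^* A u$; this is $H$-equivariant, and one checks (using Definition \ref{wergijowergergrefwef}\ref{eqrgoihjqoiefewfewdqewdew} for the coarse structure on $\Ind_H^G X$, together with the fact that the preimage under $i_X$ of a $G$-entourage on $\Ind_H^G X$ is an $H$-entourage on $X$) that controlled morphisms are sent to controlled morphisms, hence the assignment extends to the norm closures. Smallness and local finiteness of $\phi_X(C,\rho,\mu)$ are inherited from $(C,\rho,\mu)$ via the partition of $\supp(C,\rho,\mu)$ into pieces indexed by $H\backslash G$.

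Next, using $|H\backslash G|$-additivity of $\bC$, I would construct a quasi-inverse $\psi_X\colon \bChtsmc(X)\to \bCgtsmc(\Ind_H^G X)$. Fix a set of representatives $T\subseteq G$ for $H\backslash G$ with $e\in T$; for an $H$-equivariant locally finite object $(D,\sigma,\nu)$ over $X$, set $\psi_X(D,\sigma,\nu) \coloneqq (\bigoplus_{t\in T}tD, \rho, \mu)$ with $\mu$ supported on the $G$-translates of $i_X(X)$ by transport, $\mu(i_X(Y)) = e_t (\sigma_t)_* \nu(Y) e_t^*$ on the $t$-th summand (and extended equivariantly), and with $\rho_g$ defined on the $t$-th summand by the unique $t'\in T$ and $h\in H$ with $gt=t'h$, using the $H$-cocycle $\sigma_h$ to identify $tD$ with $t'(h D)$. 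The formulas are essentially the same in spirit as those appearing in Proposition \ref{rgiowegrerewef} and Section \ref{qeriughqifqwcqeqc} (\eqref{refrewhiu3hfiuherifwerfrfw}--\eqref{sfdvwrgfevfvsdfvsfdvsfdv1}); the standing assumption of $|H\backslash G|$-additivity guarantees that the required orthogonal sums exist, and local finiteness of $\supp(\psi_X(D,\sigma,\nu))$ follows from the fact that the fibres of the bornology of $\Ind_H^G X$ over subsets of $X$ are bounded. The main obstacle here is bookkeeping: verifying that an $H$-invariant $V$-controlled morphism between $H$-equivariant objects over $X$ gives rise, after induction, to a $G$-invariant $GV$-controlled morphism, where $GV$ is the $G$-translate of the image of $V$ in $\Ind_H^G X\times \Ind_H^G X$, which is a coarse entourage by definition of the induced coarse structure.

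Finally, to assemble $\phi$ as a natural equivalence of functors with values in the $(2,1)$-category $\Ccat_{2,1}$, I would produce unitary natural isomorphisms $u\colon \psi_X\circ \phi_X \Rightarrow \id$ and $v\colon \phi_X\circ \psi_X\Rightarrow \id$ in complete analogy to the construction at the end of the proof of Proposition \ref{rgiowegrerewef}; the isomorphism $u$ is built from the canonical identification of $C$ with $\bigoplus_{t\in T} tC'$ given by the partition of $\mu$ along the $G$-translates of $i_X(X)$ (using Lemma \ref{eriogjqwefqewfewfeqdewdq}), while $v$ is essentially the identity on the $e$-summand. Naturality of $\phi_X$ in $X$ in the category $H\BC$ holds only up to canonical unitary isomorphism (because $\phi_X$ is defined via a choice of an image of a projection, which is the reason we pass to $\Ccat_{2,1}$); these coherence $2$-isomorphisms are constructed exactly as in Theorem \ref{ethgiuwieogfergwgrergweg}. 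Checking the coherence conditions is a straightforward but tedious diagram chase using the orthogonality of the families of canonical isometries, and I would omit the details as in the preceding sections.
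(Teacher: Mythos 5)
Your proposal is correct and takes essentially the same approach as the paper: you construct the same restriction functor $\phi_X$ via the image of $\mu(i_X(X))$, and the explicit quasi-inverse $\psi_X$ you describe (summing over a transversal $T$ for $H\backslash G$, using $|H\backslash G|$-additivity) is precisely what the paper's essential-surjectivity argument produces; the paper just phrases that last step as ``fully faithful plus essentially surjective'' rather than writing $\psi_X$ and the unitaries $u,v$ as named data, and then also constructs the same naturality $2$-isomorphisms $v_f$ from the uniqueness clause for images of projections.
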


\begin{rem}
The reason for considering the target of the two functors in the $(2,1)$-category is that the transformation $\phi$ which we will construct is only natural for $X$ in $H\BC$ up to unique isomorphisms which are in general not equalities. This is because in  the construction we must choose objects representing images of projections, and these choices are only determined up to unique unitary isomorphisms, see also Remark \ref{gwoptjgopgergwerg}. \hB
\end{rem}

\begin{proof}[Proof of Proposition \ref{woigjwergregwregwerg}]

%

We start with recalling the construction of the induction functor \eqref{fqewpofkopfweqfqwefwqf} from \cite[Sec.\ 6]{equicoarse}.
Let $X$ be in $H\BC$. We define $\Ind_H^G(X)$ in $G\BC$ as follows.
\begin{itemize}
\item The underlying $G$-set of $\Ind_H^G(X)$ is $G \times_H X$ with the left-action of $G$ on the left factor.  Here $G \times_H X$ is the quotient of the set  $G \times X$ by $H$ with respect to the action given by $h(g,x) = (g h^{-1},hx)$.
\item The bornology on $G \times_H X$ is generated by the images under the natural projection $G\times X\to G\times_{H}X$ of the subsets  $\{g\} \times B$ for all bounded subsets $B$ of $X$.
\item The coarse structure is generated by the images of $\diag_G \times U$ under this projection for all entourages $U$ of $X$.
\end{itemize}

The functor \eqref{fqewpofkopfweqfqwefwqf} 
is now defined as follows:
\begin{itemize}
\item  {objects:} The functor $\Ind_{H}^{G}$ sends $X$ in $H\BC$ to   $\Ind_H^G(X)$ in $G\BC$.
\item {morphisms:} The functor $\Ind_{H}^{G}$ sends   a morphism $f\colon X \to Y$ in $H\BC$  to the morphism in $G\BC$ given by the map  $G\times_{H} f \colon G\times_{H} X\to G\times_{H} Y$ of underlying sets induced from $\id_{G}\times f \colon G\times X\to G\times Y$.
\end{itemize}


For $X$ in $H\BC$ we now construct a  morphism
$$\phi_{X}\colon \bCgtsmc(\Ind_H^G(X)) \to \bChtsmc(X)\, .$$
\begin{enumerate}
\item objects: Let $(C,\rho,\mu)$ be an object of $\bCgtsmc(\Ind_H^G(X))$.
We have a $H$-equivariant inclusion $i \colon X\to G\times_{H}X$ given by
$x\mapsto [e,x]$. Here $[g,x]$ denotes the image in $G\times_{H}X$ of $(g,x)$ under the natural projection $G\times X\to G\times_{H}X$.  Since $\bC$ is effectively additive  and $(C,\rho,\mu)$ is determined on points  we can choose an image $(\tilde C,u)$  in $\bM\bC$ of the projection $\mu(i(X))$ on $C$.
Then we define $
\tilde{\rho}= (\tilde{\rho}_h)_{h \in H} $  by $   \tilde{\rho}_h\coloneqq (hu^* ) \rho_h   u \colon \tilde{C} \to h\tilde{C}
$ and  
$\tilde{\mu} \colon \cP(X) \to \Proj(\tilde{C})$ by $  \tilde{\mu}(Y) \coloneqq u^*   \mu(Y)   u$. The functor $\phi_{X}$ sends an object  $(C,\rho,\mu)$   to the object $(\tilde C,\tilde \rho,\tilde \mu)$ in $ \bChtsmc(X)$. One checks the conditions listed in Definitions \ref{gwergoiegujowergwergwergwerg} and \ref{rthoperthrtherthergtrgertgertretr}  by straightforward   calculations and thus verifies that the object $ (\tilde C,\tilde \rho,\tilde \mu)$ is well-defined. 
\item morphisms: If $A \colon (C,\rho,\mu)\to  (C',\rho',\mu') $ is a morphism in  $\bCgtsmc(\Ind_H^G(X))$, then 
the functor $\phi_{X}$ sends $A$ to $u^{\prime,*}Au \colon (\tilde C,\tilde \rho,\tilde \mu)\to  (\tilde C',\tilde \rho',\tilde \mu') $.
One  checks  that this is a morphism in $ \bChtsmc(X) $ and that this construction is compatible with the involution and composition. Here it is important that $i(X)$ is a union of coarse components of $G\times_{H}X$ and therefore $A$ does not ``propagate'' between $i(X)$ and its complement.
\end{enumerate}

We now argue that $\phi_{X}$ is a unitary equivalence. 
We first see using the $G$-equivariance of morphisms in $ \bCgtsmc(\Ind_{H}^{G}(X)) $ 
and the fact that the $G$-translates of $i(X)$ form a coarsely disjoint partition of $\Ind_{H}^{G}(X)$
that $\phi_{X}$ is  bijective on morphism spaces.  This immediately implies that $\phi_{X}$ is fully faithful.
 It remains to  argue that $\phi_{X}$ is essentially surjective. Let 
 $(\tilde C,\tilde \rho,\tilde \mu)$  in  $\bChtsmc(X)$ be given. We choose a section $s \colon H\backslash G\to G$ of the projection $G\to H\backslash G$ such that $s([e])=e$.
 Then we choose an orthogonal AV-sum $(C,(e_{[k]})_{[k]\in H\backslash G})$ in $\bC$ of the family of objects $(s(k)\tilde C)_{[k]\in H\backslash G}$ in $\bC$. 
 It is at this point where we use the additional assumption on $\bC$.
 We then observe using \cite[Lem. 7.8]{cank} that there exist a unique extension of 
   the $H$-cocycle $\tilde \rho$ on $\tilde C$ to a $G$-cocycle $\rho$ on $C$ 
   such that $he^{*}_{[e]} \rho_{h}e_{[e]}=\tilde \rho_{h}$ for all $h$ in $H$.  Furthermore,
   there exists a unique extension of
    $\tilde \mu$ to a $G$-equivariant  $\Proj(C)$-valued measure on $G\times_{H}X$ such that for any subset $Y$ of $X$ we have
    $\tilde \mu(Y)=e_{[e]}^{*} \mu(i(Y)) e_{[e]}$.
    Applying $\phi_{X}$ to the object $(C,\rho,\mu)$ of $\bCgtsmc(\Ind_{H}^{G}(X)) $ we get an object which is unitarily isomorphic to the original object $(\tilde C,\tilde \rho,\tilde \mu)$.

Let $f \colon X\to X'$ be a morphism in $H\BC$, and let  $(C,\rho,\mu)$ be  in  
$\bCgtsmc(\Ind_H^G(X))$. In the construction of $\phi_{X}$ we have chosen an image $(\tilde C,u)$ of $\mu(i(X))$ in $\bM\bC$. Similarly, in the construction of
$\phi_{X'}$ applied to  $\Ind_{H}^{G}(f)_{*}(C,\rho,\mu)$ we have chosen an image $(\tilde C',u')$ in $\bM\bC$ of the projection $\Ind_{H}^{G}(f)_{*}\mu(i'(X'))$ on $C$.  We now note that $\mu(i(X))= \Ind_{H}^{G}(f)_{*}\mu(i'(X'))$.
Consequently, there exists a unique unitary multiplier  isomorphism $v_{f,(C,\rho,\mu)} \colon \tilde C\to \tilde C'$ such that $v_{f,(C,\rho,\mu)}u=u'$.
The family $$v_{f} \coloneqq (v_{f,(C,\rho,\mu)})_{(C,\rho,\mu)\in \Ob(\bCgtsmc(\Ind_H^G(X)))} $$
is a unitary isomorphism of functors $v_{f} \colon f_{*}\phi_{X}\to \phi_{X'}\Ind_{H}^{G}(f)_{*}$ filling the diagram
$$\xymatrix{
\bCgtsmc(\Ind_H^G(X))\ar@{}[drr]^-{\Rightarrow,v_{f}}\ar[rr]^{\Ind_{H}^{G}(f)_{*}}\ar[d]^{\phi_{X}} && \bCgtsmc(\Ind_H^G(X'))\ar[d]^{\phi_{X'}}\\
\bChtsmc(X)\ar[rr]^{f_{*}} && \bChtsmc(X')
}$$
If $f' \colon X'\to X''$ is a second morphism in $H\BC$, then using the uniqueness of the choices of   the unitary isomorphisms $v_{f,(C,\rho,\mu)}$, etc.\ one checks that
\[
(v_{f'}\circ \Ind_{H}^{G}(f)_{*}) (f'_{*}\circ v_{f})=v_{f'f}\,.\qedhere
\]
\end{proof}

\begin{rem}\label{gwoptjgopgergwerg}
One could explicitly  strictify {the} functor $\phi$ to a natural transformation of $\Ccat$-valued functors as follows. In a first step one chooses for every object $C$ in $\bC$ and every projection $p$ on $C$ a representative $(\tilde C,u)$ of the image of $p$.
In the construction of $\phi_{X}$ we then  use this choice for the mage of $\mu(i(X))$.

If we proceed in this way, then the transformations $v_{f}$ for morphisms $f$ in $H\BC$ all become identities.
\hB
\end{rem}

Let $$\Res^{G}_{H} \colon G\BC\to H\BC$$ be the functor which sends a $G$-bornological coarse space to the $H$-bornological coarse space obtained by restriction of the action from $G$ to $H$.
We have a canonical natural transformation
\begin{equation}\label{adfadsffwqfefqewfqwef}
\psi \colon \bCgtsmc\to \bChtsmc\circ \Res^{G}_{H}
\end{equation}
of functors from $G\BC$ to $\Ccat$. Its evaluation $\psi_{X}$   on $X$ in $G\BC$ is given as follows:
\begin{enumerate}
\item objects: The functor $\psi_{X}$ sends the object $(C,\rho,\mu)$ of $\bCgtsmc (X)$  with $\rho=(\rho_{g})_{g\in G}$ to the object $(C,\rho_{|H},\mu)$ of $ \bChtsmc( \Res^{G}_{H}(X))$, where 
$\rho_{|H} \coloneqq (\rho_{h})_{h\in H}$.
\item morphisms: The functor $\psi_{X}$ is the canonical inclusion of $G$-invariant into $H$-invariant morphisms.
\end{enumerate}

 Assume that $X$ is in $G\BC$.  Then we have an isomorphism
$$c_{X} \colon \Ind_{H}^{G}(\Res^{G}_{H}(X))\to (G/H)_{min,min}\otimes X$$
in $G\BC$
given by  the map of underlying sets
$$G\times_{H}X\to G/H \times X\, , \quad [g,x]\mapsto (g,gx)\, .$$
 The family
$(c_{X})_{X\in G\BC}$ is a natural isomorphism of functors
\begin{equation}\label{dsfbsdbkmlsdfbsdfb}
c:\Ind_{H}^{G}\circ \Res^{G}_{H}\to (G/H)_{min,min}\otimes - \colon G\BC\to G\BC\, .
\end{equation} 
We note that the projection $p_{X} \colon (G/H)_{min,min}\otimes X\to X$ is a bounded covering (Definition \ref{wergijowergergrefwef}).
The family $p \colon (p_{X})_{X\in G\BC}$ is a natural transformation of functors  \begin{equation}\label{adsogijaoisdgasga}
p \colon (G/H)_{min,min}\otimes \iota\to \iota\ ,
\end{equation}
where $\iota \colon G\BC\to G\BC_{\tr}$ is as in \eqref{ergegwergewrgewrgefwefwerf}.

Recall that $\bC$ is in $\Fun(BG,\nCcat)$. 
We  assume that   $  \bC$   admits all very small AV-sums. Then the pull-back $p_{X}^{*}$ is defined by Theorem \ref{ethgiuwieogfergwgrergweg}.
We consider the composition 
\begin{align*}
\mathclap{
\bCgtsmc(X) \xrightarrow{p_{X}^{*}} \bCgtsmc( (G/H)_{min,min}\otimes X)\stackrel{c_{X,*}^{-1}}{\cong}  \bCgtsmc(\Ind_{H}^{G}(\Res^{G}_{H}(X)))\stackrel{\phi_{{\Res^{G}_{H}(X)}}}{\simeq}  \bChtsmc(\Res^{G}_{H}(X))
}
\end{align*}
which we will denote by $\tilde \psi_{X}$. The family
$\tilde \psi \coloneqq (\tilde \psi_{X})_{X\in G\BC}$ is a natural transformation 
\begin{equation}\label{sdvsdvavfwfqwefewqf}
\tilde \psi \colon \bCgtsmc\to  \bChtsmc\circ \Res^{G}_{H}
\end{equation} 
 of functors from $G\BC$ to $\Ccat_{2,1}$.
 We must again consider values in the $(2,1)$-category $\Cat_{2,1}$ since the naturality of $\tilde \psi$ is only up to canonical unitary   isomorphisms instead of equalities. Even if we would strictify $\phi$ as in Remark \ref{gwoptjgopgergwerg}
 we would still get non-trivial fillers of the naturality squares from the $2$-categorial nature of $p$.

 The following proposition compares $\psi$ in \eqref{adfadsffwqfefqewfqwef} with $\tilde \psi$ in  \eqref{sdvsdvavfwfqwefewqf}.
\begin{prop}\label{rgoijwtogwerergwgregwr}
There exists a unitary isomorphism $u \colon \psi\to \tilde \psi$ of natural transformations from $\bCgtsmc$ to $\bChtsmc\circ \Res^{G}_{H}$.
\end{prop}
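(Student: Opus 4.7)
The plan is to unpack both $\psi_X$ and $\tilde\psi_X$ on a fixed object $(C,\rho,\mu)$ in $\bCgtsmc(X)$ and exhibit $u_X$ as the canonical unitary coming from the image choice built into $\phi_{\Res^G_H(X)}$. Recall that the coarse components of $W\coloneqq (G/H)_{min,min}\otimes X$ are indexed by $G/H\times\pi_0(X)$ via $(gH,Y)\mapsto\{gH\}\times Y$, and $p_X$ sends $\{gH\}\times Y$ isomorphically onto $Y$. So by item \ref{rgiowegwergerfweffw} in the proof of Theorem \ref{ethgiuwieogfergwgrergweg}, $p_X^*(C,\rho,\mu)$ is the chosen orthogonal sum $\bigoplus_{(gH,Y)} C(Y)$ with canonical embeddings $e^{p_X^*}_{(gH,Y)}$, equipped with the pulled-back $G$-cocycle and measure. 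The isomorphism $c_X\colon \Ind_H^G(\Res^G_H(X))\to (G/H)_{min,min}\otimes X$ sends $[e,x]$ to $(eH,x)$, so under $c_{X,*}^{-1}$ the inclusion $i(\Res^G_H(X))\subseteq G\times_H\Res^G_H(X)$ corresponds to $\{eH\}\times X\subseteq W$, and the projection $\mu(i(\Res^G_H(X)))$ on $c_{X,*}^{-1}p_X^*C$ is exactly $\sum_{Y\in\pi_0(X)} e^{p_X^*}_{(eH,Y)}e^{p_X^*,*}_{(eH,Y)}$.

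Next I would construct $u_X$. Let $(\tilde C,\tilde u)$ be the chosen image of that projection used in $\phi_{\Res^G_H(X)}$, so that $\tilde\psi_X(C,\rho,\mu)=(\tilde C,\tilde\rho,\tilde\mu)$ with $\tilde\rho_h=(h\tilde u)^*(c^{-1}_{X,*}p_X^*\rho)_h\tilde u$ and $\tilde\mu(Y)=\tilde u^*(c^{-1}_{X,*}p_X^*\mu)(Y)\tilde u$. On the other hand, using the canonical identification $\bigoplus_{Y\in\pi_0(X)} C(Y)\xrightarrow{\sum_Y u^C_Y e_Y^*} C$ from Lemma~\ref{eriogjqwefqueqfewfeqdewdq}'s setup, the sum $\sum_{Y} e^{p_X^*}_{(eH,Y)}u_Y^{C,*}$ defines a canonical isometry $C\to c_{X,*}^{-1}p_X^*C$ with range the above projection; composing with $\tilde u^*$ yields a unitary
$$u_X\coloneqq \tilde u^*\bigl(\textstyle\sum_{Y\in\pi_0(X)} e^{p_X^*}_{(eH,Y)}u_Y^{C,*}\bigr)\colon C\to \tilde C\,,$$
which is determined by $\tilde u\circ u_X=\sum_Y e^{p_X^*}_{(eH,Y)}u_Y^{C,*}$ and is unique with this property by \cite[Lem.~2.14]{cank}. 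It is then a routine verification, using the explicit formulas for $p_X^*\rho$, $p_X^*\mu$ and the equivariance \eqref{eq_mu_equivariant} for the original $\rho$, that $u_X$ intertwines $\rho_{|H}$ with $\tilde\rho$ and $\mu$ with $\tilde\mu$; it is trivially controlled because it is defined component-wise across $\pi_0(X)$.

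Naturality of $u$ in the morphisms of $\bCgtsmc(X)$ (i.e.\ $u_X\circ\psi_X(A)=\tilde\psi_X(A)\circ u_X$ for $A\colon (C,\rho,\mu)\to(C',\rho',\mu')$) follows from formula \eqref{fdsvvdfvdsfvsfv} for $p_X^*(A)$ together with the definition of $\phi$ on morphisms, both of which are given by sandwiching $A$ between the canonical isometries $u^C_Y$, $u^{C'}_Y$, $\tilde u$, $\tilde u'$. Naturality of the family $u=(u_X)_{X\in G\BC}$ with respect to a morphism $f\colon X\to X'$ in $G\BC$ produces a square that is only required to commute up to the canonical $2$-isomorphism $v_f$ from the proof of Proposition \ref{woigjwergregwregwerg} and the $2$-isomorphisms $b_{-,-}$, $a_{-,-}$ from the transfer structure \eqref{tgwjfirjfowfwerfewrfwefewrfrf}, \eqref{sfdvwrgfevfvsdfvsfdvsfdv1}; the relevant coherence amounts to the uniqueness clause of \cite[Lem.~2.14]{cank} applied to the relevant images, since all fillers are determined by how they interact with the canonical embeddings $e^{p_X^*}_{(eH,Y)}$ and $u^C_Y$.

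The main obstacle is purely bookkeeping: one has to track three layers of choices (the orthogonal sum decomposition of $C$ along $\pi_0(X)$, the sum decomposition of $p_X^*C$ along $G/H\times\pi_0(X)$, and the image $(\tilde C,\tilde u)$ chosen by $\phi_{\Res^G_H(X)}$) and verify that the resulting $2$-cell filling every naturality square of $\tilde\psi$ matches the strictly commuting square for $\psi$ through $u$. Once the formulas above are in place, each individual check reduces to the orthogonality relations $e^{p_X^*,*}_{(gH,Y)}e^{p_X^*}_{(g'H,Y')}=\delta_{(gH,Y),(g'H,Y')}\id$, $u^{C,*}_Y u^C_{Y'}=\delta_{Y,Y'}\id$, and the universal property of images, so no further essential difficulties arise.
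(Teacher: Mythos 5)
Your proposal matches the paper's proof in both structure and the essential idea: identify the subsum of $p_X^*C$ indexed by the components in $\{eH\}\times X$ with $C$ via the structure maps $u^C_Y$ and $e^{p_X^*}_{(eH,Y)}$, compose with $\tilde u^*$ to get the canonical unitary onto $\tilde C$, and invoke the uniqueness clause of the image universal property for all subsequent coherence checks. The paper states the construction more in prose while you spell out the formula for $u_X$, but the argument is the same.
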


\begin{proof}
For $X$ in $G\BC$ we must construct a unitary isomorphism
$$u_{X} \colon \psi_{X}\to \tilde \psi_{X}\, .$$
Let $(C,\rho,\mu)$ be in $\bCgtsmc(X)$. Then we must construct an isomorphism
$$u_{X,(C,\rho,\mu)} \colon \psi_{X}(C,\rho,\mu)\to \tilde \psi_{X}(C,\rho,\mu)\, .$$
In order to avoid lengthy formulas we explain the object  $\tilde \psi_{X}(C,\rho,\mu)
$ in words.  In the first step of the construction of $p_{X}^{*}$ in the proof  of Theorem \ref{ethgiuwieogfergwgrergweg} for every coarse component $Y$ of $X$ we have chosen an image
$(C(Y), u_{Y})$ of $\mu(Y)$ in $\bM\bC$.  
  By Lemma \ref{eriogjqwefqewfewfeqdewdq}  the object $C$ is isomorphic to the orthogonal AV-sum of the family $(C(Y))_{Y\in  \pi_{0}(X)}$.  The underlying object of $p_{X}^{*}(C,\rho,\mu)$ is
given by an orthogonal  AV-sum over   the objects $C(p_{X}(Z))$ indexed by the set of coarse components $Z$ of $(G/H)_{min,min}\otimes X$.    The subsum over the components in $[e]\times  X$ is canonically unitarily isomorphic in $\bM\bC$ to $C$. Since $c_{X}^{-1}([e]\times  X)=i(X)$ we conclude, using the details of the construction of $\phi_{X}$ in the proof of Proposition \ref{woigjwergregwregwerg}, that the underlying object of $\tilde \psi_{X}(C,\rho,\mu)$ is canonically unitarily isomorphic to
$C$.  
We then check that this isomorphism is compatible with the cocycles and measures and yields the desired unitary isomorphism $u_{X,(C,\rho,\mu)}$.

In all cases above the canonical unitary isomorphisms are determined uniquely by their compatibilty with the structure maps of the chosen images of projections.

One then checks that the family
$u_{X} \coloneqq (u_{X,(C,\rho,\mu)})_{(C,\rho,\mu)\in \Ob(\bCgtsmc)}$ is a unitary isomorphism of functors.

Let $f \colon X\to X'$ be a morphism in $G\BC$. 
Then  from the $2$-categorial naturality of $(p_{X}^{*})$ and $ \phi$
we get a unitary isomorphism
$$v_{f} \colon  \Res^{G}_{H}(f)_{*}\tilde \psi_{X}\to \tilde \psi_{X'}  f_{*}\, .$$
Then one finally checks, using the uniqueness of the unitary isomorphisms going into the constructions of $u_{X}$ and $v_{f}$, that the composition
 $$\xymatrix{\bCgtsmc(X) \ar@/^1cm/[rr]^{\psi_{X}} \ar@/^-1cm/[rr]^{\tilde \psi_{X}}\ar[dd]^{f_{*}}& \Downarrow u_{X}&\bChtsmc(\Res^{G}_{H}(X))\ar[dd]^{\Res^{G}_{H}(f)_{*}}\\&\Downarrow v_{f}&\\\bCgtsmc(X')\ar@/^1cm/[rr]_{\tilde \psi_{X'}} \ar@/^-1cm/[rr]_{\psi_{X'}}&\Downarrow u_{X'}^{-1}&\bChtsmc(\Res^{G}_{H}(X'))}$$
 of isomorphisms of functors is the identity.
\end{proof}

Let $\Homol \colon \nCcat\to \bM$ be a finitary homological functor.

\begin{kor}\label{refwerferfrwergwreg}
Assume that  $\bC$ admits all very small AV-sums. 
\begin{enumerate}
\item\label{oiejgwoegergwerg} We have a natural equivalence
$\Homol \bC\cX_{c}^{G}\circ \Ind_{H}^{G}\to \Homol \bC\cX^{H}_{c}$ of $H$-equivariant  coarse homology theories.
\item\label{regejwgoijoergwegwreg} We have a natural transformation $\Homol \bC\cX^{G}_{c}\to \Homol \bC\cX_{c}^{H}\circ  \Res^{G}_{H}$ of $G$-equivariant coarse homology theories induced by $\psi$ in \eqref{adfadsffwqfefqewfqwef}.

\end{enumerate}

\end{kor}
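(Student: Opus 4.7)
The plan is to deduce both statements by applying the homological functor $\Homol$ to the underlying $C^{*}$-category-level results already established.

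For Part~\ref{oiejgwoegergwerg}, Proposition~\ref{woigjwergregwregwerg} provides a natural equivalence $\phi \colon \bCgtsmc \circ \Ind_H^G \xrightarrow{\simeq} \bChtsmc$ of functors $H\BC \to \Ccat_{2,1}$. Since $\Homol$ sends unitary equivalences to equivalences, the factorization $\Homol \simeq \Homol_{2,1} \circ \ell_{2,1}$ constructed in the proof of Theorem~\ref{qwrgiherogwefewfqewfqwefqwfewf} allows us to apply $\Homol_{2,1}$ to $\phi$, yielding the asserted natural equivalence
\[
\Homol\bC\cX_c^G \circ \Ind_H^G \simeq \Homol\bC\cX_c^H
\]
of functors $H\BC \to \bM$. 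The target is an $H$-equivariant coarse homology theory by Theorem~\ref{ergoiegwergewrgwergergwer}, and the source inherits this property via the equivalence.

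For Part~\ref{regejwgoijoergwegwreg}, we apply $\Homol$ objectwise to the natural transformation $\psi$ of \eqref{adfadsffwqfefqewfqwef}. Since the components of $\psi$ are ordinary functors in $\Ccat$ (not merely up to unitary isomorphism), this directly yields the natural transformation $\Homol\psi \colon \Homol\bC\cX_c^G \to \Homol\bC\cX_c^H \circ \Res^G_H$ of functors $G\BC \to \bM$. The source is a $G$-equivariant coarse homology theory by Theorem~\ref{ergoiegwergewrgwergergwer}.

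The only genuine verification -- and the main technical point of the proof -- is that $\Homol\bC\cX_c^H \circ \Res^G_H$ is itself a $G$-equivariant coarse homology theory. Coarse invariance, excision, and vanishing on flasques are immediate since closeness of morphisms is independent of the group action, every $G$-invariant complementary pair is in particular $H$-invariant, and a $G$-equivariant endomorphism implementing flasqueness remains $H$-equivariant after restriction. For $u$-continuity one observes that $\cC_X^G \subseteq \cC_X^H \subseteq \cC_X$ with $\cC_X^G$ cofinal in $\cC_X$ by Definition~\ref{trbertheheht}.\ref{igwoegwergergwrgrg}, hence also cofinal in $\cC_X^H$; this lets one identify the $G$-invariant $u$-continuity colimit with the $H$-invariant one to which $u$-continuity of $\Homol\bC\cX_c^H$ applies.
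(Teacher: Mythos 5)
Your proof is correct and follows essentially the same strategy as the paper: Part~\ref{oiejgwoegergwerg} is obtained by applying the $2$-categorical extension $\Homol_{2,1}$ (from the proof of Theorem~\ref{qwrgiherogwefewfqewfqwefqwfewf}) to the unitary equivalence $\phi$ of Proposition~\ref{woigjwergregwregwerg}, and Part~\ref{regejwgoijoergwegwreg} is obtained by applying $\Homol$ directly to $\psi$, which is already a natural transformation in $\Ccat$.

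There is one mild divergence. For the fact that $\Homol\bC\cX_c^G \circ \Ind_H^G$ is an $H$-equivariant coarse homology theory, the paper invokes the external Lemma~4.19 of \cite{desc}, which settles this for an arbitrary equivariant coarse homology theory composed with $\Ind_H^G$ (the universal case); you instead deduce it after the fact from the constructed equivalence with $\Homol\bC\cX_c^H$. Both routes are sound, but the paper's phrasing is preferable since it does not presuppose the equivalence. Your explicit verification that $\Homol\bC\cX_c^H \circ \Res^G_H$ is a $G$-equivariant coarse homology theory is a worthwhile addition the paper leaves implicit; the $u$-continuity step in particular is correctly handled once one also notes that for $U \in \cC_X^G$ the $G$-coarse structure and the $H$-coarse structure generated by $U$ coincide, so that $\Res^G_H(X_U) = (\Res^G_H(X))_U$, and then the cofinality of $\cC_X^G$ in $\cC_X^H$ identifies the two colimits.
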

\begin{proof}
We use Theorem  \ref{ergoiegwergewrgwergergwer} in order to conclude that
$\Homol \bC\cX_{c}^{G}$ and $\Homol \bC\cX_{c}^{H}$ are $G$- and $H$-equivariant coarse homology theories, respectively.
The composition $\Homol \bC\cX_{c}^{G}\circ \Ind_{H}^{G}$ is an $H$-equivariant coarse homology theory by \cite[Lem.\ 4.19]{desc} (which settles the universal case).
The  natural equivalence  in Assertion \ref{oiejgwoegergwerg} is obtained by applying the extension ${\Homol}_{1,2}$ (see \eqref{fwevpoiejvopevececqwec}) to the transformation $\phi$  obtained in Proposition \ref{woigjwergregwregwerg}. 
\end{proof}

\begin{rem}
Assume that $E^{G} \colon G\BC\to \bM$ is a $G$-equivariant coarse homology theory. Then  by \cite[Lem.\ 4.19]{desc} we can define an $H$-equivariant coarse homology theory by
$  E^{G}\circ \Ind_{H}^{G}:H\BC\to \bM$. 

If $E^{G}$ admits transfers, then we can furthermore define a natural transformation
\begin{equation}\label{3rfjoifqwefqwfqwf}
  E^{G}\to E^{G}\circ \Ind_{H}^{G} \circ \Res^{G}_{H}
\end{equation}
 as the composition
 $$E^{G}\xrightarrow{p^{*}\circ c_{*}^{-1}} E^{G}\circ \Ind^{G}_{H}\circ \Res^{G}_{H}
 \, ,$$
 where $p$ and $c$ are as in \eqref{adsogijaoisdgasga} and  \eqref{dsfbsdbkmlsdfbsdfb}.

 For 
 $  \bC$  in $\Fun(BG, \nCcat)$ admitting all very small AV-sums
 we have 
 the two equivariant coarse homology theories
 $\Homol \bC\cX^{G}_{c}  $ and
 $\Homol \bC\cX^{H}_{c} $ which are defined independently from each other. By Corollary \ref{refwerferfrwergwreg}.\ref{oiejgwoegergwerg}
  we know that the transformation $\phi$ provides an equivalence of  
    $H$-equivariant coarse homology theories
 \begin{equation}\label{ewfqwfewfwqfqwef4}
\Homol \bC\cX^{G}_{c}\circ \Ind_{H}^{G}\xrightarrow{\Homol(\phi),\simeq} \Homol \bC\cX^{H}_{c}\, .
\end{equation}
Consequently, we can consider the natural transformation \begin{equation}\label{wefoijqoiwefqwefe}
r\colon \Homol \bC\cX^{G}_{c}\xrightarrow{\eqref{3rfjoifqwefqwfqwf}}    \Homol \bC\cX^{G}_{c}   \circ \Ind^{G}_{H}\circ \Res^{G}_{H} \stackrel{\eqref{ewfqwfewfwqfqwef4}}{\simeq}       \Homol \bC\cX^{H}_{c}\circ \Res^{G}_{H}\, .
\end{equation} 
The combination of Proposition \ref{rgoijwtogwerergwgregwr} and Corollary \ref{refwerferfrwergwreg}.\ref{regejwgoijoergwegwreg}  then says that  
  the abstract transformation \eqref{wefoijqoiwefqwefe}
is induced by the obvious inclusion  of $C^{*}$-categories $\psi$ in \eqref{adfadsffwqfefqewfqwef}. This fact will be further employed in \cite{bl}.
\hB
\end{rem}

\section{Strong additivity of \texorpdfstring{$K\bC\cX_{c}^{G}$}{KCXGc}}\label{3qrgiuhiurlferfvrefvfs}

Recall from Definition \ref{ergoijerwogergerfwfref9} {the notion} of a strongly additive functor on $G\BC$.
Recall that the topological $K$-theory functor (see \cite[Def.\ 14.3]{cank})
$$\Kcat\colon \nCcat\to \Sp$$
is a finitary homological functor  \cite[Thm.\ 14.4]{cank}. 
For the purpose of the present section its main additional property is
that it  preserves arbitrary very small products of finitely additive $C^{*}$-categories \cite[Thm.~15.7]{cank} .


%
%
%
%

Assume that $  \bC$ in $\Fun(BG, \nCcat)$ is countably and effectively additive.
By Theorem \ref{ergoiegwergewrgwergergwer} we have a continuous equivariant coarse homology theory
$$\Kcat \bC\cX_{c}^{G}\colon G\BC\to \Sp\, .$$

\begin{theorem}\label{rhrherthertgergertg}
If $\bC$ admits all very small AV-sums, then 
the functor   $\Kcat \bC\cX_{c}^{G}$ is  strongly additive.
\end{theorem}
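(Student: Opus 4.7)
To establish strong additivity we must verify the three conditions of Definition~\ref{ergoijerwogergerfwfref9}. The target $\Sp$ is pointed, and $\pi_{0}$-excisiveness follows from the excisiveness of $\Kcat\bC\cX^{G}_{c}$ established in Theorem~\ref{ergoiegwergewrgwergergwer}. The main task is therefore to show that for every very small family $(X_{i})_{i\in I}$ in $G\BC$, writing $Y \coloneqq \bigsqcup_{i\in I}^{\free} X_{i}$, the canonical morphism $(p_{j})_{j\in I}\colon \Kcat\bC\cX^{G}_{c}(Y)\to \prod_{j\in I}\Kcat\bC\cX^{G}_{c}(X_{j})$ is an equivalence.

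The plan is to first construct a unitary equivalence of $C^{*}$-categories
\[
\Phi \colon \bCgtsmc(Y) \xrightarrow{\simeq} \prod_{i\in I} \bCgtsmc(X_{i})
\]
and then apply \cite[Thm.~13.7]{cank}, which asserts that $\Kcat$ preserves very small products of finitely additive $C^{*}$-categories; the additivity of each $\bCgtsmc(X_{i})$ is Lemma~\ref{refhfiueadscvadsc}. The functor $\Phi$ sends an object $(C,\rho,\mu)$ to the family $(C_{i},u_{i}^{*}\rho u_{i}, u_{i}^{*}\mu u_{i})_{i\in I}$, where $(C_{i},u_{i})$ is a representative of the image of $\mu(X_{i})$ on $C$ (which exists since $\bC$ is idempotent complete), and sends a morphism $A$ to the family $(u_{i}^{\prime,*} A u_{i})_{i\in I}$. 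The key structural point is that by Definition~\ref{ergoiejoqfqefweqfqwef} the subsets $X_{i}$ and $X_{j}$ are coarsely disjoint in $Y$ for $i\ne j$, so every controlled morphism in $\Cgtsmc(Y)$, and hence every morphism in its closure $\bCgtsmc(Y)$, admits an orthogonal decomposition $A = \bigoplus_{i} u'_{i} u_{i}^{\prime,*} A u_{i} u_{i}^{*}$ satisfying $\|A\| = \sup_{i}\|u_{i}^{\prime,*} A u_{i}\|$. This decomposition shows that $\Phi$ is fully faithful and that it indeed lands in the product of $C^{*}$-categories.

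For essential surjectivity, given a uniformly bounded family $((C_{i},\rho_{i},\mu_{i}))_{i\in I}$, I form $C \coloneqq \bigoplus_{i\in I} C_{i}$ in $\bC$ (using that $\bC$ admits all very small sums by the $\Fun(BG,\bP(\nCcat)^{\alcc})$ assumption), with $\rho \coloneqq \bigoplus_{i}\rho_{i}$ and $\mu(B) \coloneqq \bigoplus_{i}\mu_{i}(B\cap X_{i})$. The object $(C,\rho,\mu)$ lies in $\bCgtsmc(Y)$ because every bounded set in $Y$ meets only finitely many summands and equals a finite union of bounded sets in the $X_{i}$'s (Definition~\ref{ergoiejoqfqefweqfqwef}), which gives both smallness and local finiteness. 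The main technical step is the surjectivity of $\Phi$ on morphism spaces: given a uniformly bounded family $(A_{i})_{i\in I}$ and $\epsilon>0$, I choose $A_{i}^{(\epsilon)}\in \Cgtsmc(X_{i})$ with $\|A_{i}-A_{i}^{(\epsilon)}\|<\epsilon$, say $U_{i}^{(\epsilon)}$-controlled; the orthogonal sum $\bigoplus_{i} A_{i}^{(\epsilon)}$ is then controlled by $\bigcup_{i} U_{i}^{(\epsilon)}$, which is an entourage of $Y$ by the definition of the coarse structure on the free union, and letting $\epsilon\to 0$ produces the preimage in $\bCgtsmc(Y)$. The hard part is essentially this control step, but it reduces cleanly to the fact that entourages on the free union split as unions over the summands.

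Applying $\Kcat$ to $\Phi$ and composing with the product-preservation equivalence of \cite[Thm.~13.7]{cank} yields an equivalence $\Kcat\bC\cX^{G}_{c}(Y)\simeq \prod_{i\in I}\Kcat\bC\cX^{G}_{c}(X_{i})$. To identify this with the canonical morphism $(p_{j})_{j\in I}$ from Definition~\ref{ergoijerwogergerfwfref9}, I note that for each $j$ the $j$-th component of $\Kcat(\Phi)$ restricts to the identity when precomposed with $\Kcat\bC\cX^{G}_{c}(X_{j}\hookrightarrow Y)$ and to zero when precomposed with $\Kcat\bC\cX^{G}_{c}(\bigsqcup_{i\ne j}^{\free} X_{i}\hookrightarrow Y)$, since an object of $\bCgtsmc(Y)$ supported on the complement of $X_{j}$ has vanishing image of $\mu(X_{j})$. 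These two properties characterize $p_{j}$ by the defining pushout square~\eqref{eq_pushout_pizero_excisive}, completing the argument.
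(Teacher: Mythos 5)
Your proof follows essentially the same approach as the paper. The paper first establishes your unitary equivalence $\bCgtsmc\big(\bigsqcup^{\free}_{i\in I}X_{i}\big)\simeq\prod_{i\in I}\bCgtsmc(X_{i})$ as Lemma~\ref{rgqoijfoewfewfqweff} (with the same construction via images of the $\mu(X_{i})$ and the same $\delta_{ij}$-characterization of the components), and then the proof of the theorem combines this with $\pi_{0}$-excisiveness and the product-preservation of $\Kcat$ from \cite[Thm.~13.7]{cank} to identify the resulting equivalence with $(p_{j})_{j}$, exactly as you do.
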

 
{The proof of Theorem~\ref{rhrherthertgergertg} will be given further below. We first start with the corresponding additivity statement for the functor $\bCgtsmc$.}

Let $(X_{i})_{i\in I}$ be a very small family in $G\BC$. Recall Definition \ref{ergoiejoqfqefweqfqwef} of the free union $\bigsqcup^{\free}_{i\in I}X_{i}$ of this family. For $i$ in $I$ let $f_{i} \colon X_{i}\to \bigsqcup^{\free}_{i\in I}X_{i}$ denote the inclusion.
In the formulation of the following lemma the symbol $0$ stands for a functor which sends every object to a zero object.
\begin{lem}\label{rgqoijfoewfewfqweff}  
 If $\bC$ admits all very small AV-sums, then  there exists a unitary equivalence
 $$\phi \colon \bCgtsmc\big(\bigsqcup^{\free}_{i\in I}X_{i}\big) \to \prod_{i\in I}\bCgtsmc(X_{i})$$ in $\Ccat$ such that for all $i,j$ in $I$ we have a unitary isomorphism
\begin{equation}\label{eq_additivity_functor}
\pr_{j}\circ \phi\circ f_{i,*}\cong \begin{cases}\id_{\bCgtsmc(X_{i})}&\text{if }i=j\,,\\0&\text{if }j\not=i\,.\end{cases}
\end{equation}
\end{lem}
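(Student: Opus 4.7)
The plan is to construct $\phi$ by decomposing each object along the projections $\mu(X_i)$ and to exhibit an explicit inverse functor $\psi$ via orthogonal sums, which exist because $\tilde{\bC}$ admits all very small sums by the $\alcc$ assumption.

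First, for $(C,\rho,\mu)$ in $\bCgtsmc(\bigsqcup^{\free}_{i\in I}X_i)$, I would choose for each $i$ an image $(C_i,v_i)$ of the projection $\mu(X_i)$ and set $\phi(C,\rho,\mu)_i := (C_i,\, v_i^*\rho v_i,\, v_i^*\mu_{|\cP_{X_i}}v_i)$. This is locally finite in $\bCgtsmc(X_i)$, since bounded subsets of $X_i$ are bounded in the free union by Definition~\ref{ergoiejoqfqefweqfqwef}. The crucial point for morphisms is that any $U$-controlled morphism $A:(C,\rho,\mu)\to (C',\rho',\mu')$ satisfies $\mu'(X_j)A\mu(X_i)=0$ for $i\neq j$: by construction of the free union the coarse structure is generated by entourages of the form $\bigcup_i U_i$ with $U_i\in\cC_{X_i}$, so every $U\in\cC_{\bigsqcup^{\free}X_i}$ is contained in $\bigsqcup_i X_i\times X_i$ and $X_j$ is $U$-separated from $X_i$; by taking norm limits the same vanishing holds for every morphism of $\bCgtsmc(\bigsqcup^{\free}X_i)$. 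Hence $\phi(A)_i := v_i'^* A v_i$ gives a bounded family with $\|\phi(A)_i\|\le \|A\|$, and if $A$ is $U$-controlled then $\phi(A)_i$ is $(U\cap (X_i\times X_i))$-controlled for the induced coarse structure on $X_i$, so $\phi$ is continuous on morphism spaces and extends from the dense subcategories. The compatibility \eqref{eq_additivity_functor} is then immediate: $f_{i,*}$ produces a measure with $\mu(X_j)=0$ for $j\neq i$, giving a zero object on the $j$-th factor, while for $j=i$ the output differs from the input only by the canonical choice of the image of an invertible projection.

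Second, I would construct $\psi$ in the reverse direction. Given $((C_i,\rho_i,\mu_i))_{i\in I}$ in the product, form an orthogonal sum $(C,(e_i)_{i\in I})$ of $(C_i)_{i\in I}$ in $\bC$, and assemble the cocycle and projection-valued measure using the embeddings $e_i$ and \cite[Cor.~4.1]{cank}; in particular $\mu(B) = \sum_i e_i\mu_i(B\cap X_i)e_i^*$ is a finite sum for bounded $B$, since the bornology of the free union is generated by $\bigcup_i \cB_{X_i}$, which also yields local finiteness of the support. For morphisms, set $\psi((A_i)_{i\in I}) := \sum_i e'_i A_i e_i^*$, again legitimate by \cite[Cor.~4.1]{cank} thanks to the boundedness condition $\sup_i\|A_i\|<\infty$ that is built into the definition of morphisms in the product.

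The main obstacle is verifying that $\psi(A)$ actually lies in $\bCgtsmc(\bigsqcup^{\free}X_i)$, since the individual components $A_i$ are only norm-limits of controlled morphisms in $\bCgtsmc(X_i)$ with no a priori uniform control over $i\in I$. The resolution is that, for every $\varepsilon>0$, one can pick independently for each $i$ a $U_i^{(\varepsilon)}$-controlled approximation $A_i^{(\varepsilon)}$ with $\|A_i^{(\varepsilon)}-A_i\|<\varepsilon$; the assembled morphism $\psi(A^{(\varepsilon)})$ is then controlled by $\bigcup_i U_i^{(\varepsilon)}$, which \emph{is} a coarse entourage of the free union precisely by Definition~\ref{ergoiejoqfqefweqfqwef}, and the mutual orthogonality of the $e_i$ gives $\|\psi(A^{(\varepsilon)})-\psi(A)\|=\sup_i\|A_i^{(\varepsilon)}-A_i\|\le \varepsilon$, so $\psi(A)$ is a norm-limit of genuinely controlled morphisms. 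Finally, the unitary isomorphisms $\psi\circ\phi\cong\id$ and $\phi\circ\psi\cong\id$ are natural and are provided in the obvious way by the structure maps $v_i$ and $e_i$ chosen in the two constructions.
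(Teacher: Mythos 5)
Your proposal is correct and follows essentially the same route as the paper's proof: you decompose objects along the mutually orthogonal projections $\mu(X_i)$ to define $\phi$, assemble $\psi$ via orthogonal sums and \cite[Cor.~4.1]{cank}, and handle the delicate point — that $\psi(A)$ is a norm-limit of controlled morphisms even though the $A_i$ are only controlled with no uniform bound over $i$ — by the same $\varepsilon$-approximation argument using that $\bigcup_i U_i$ is an entourage of the free union. The only cosmetic difference is notational (the paper expresses the restricted cocycle and measure via the auxiliary notation $\rho_{g,Y}$ and $\mu_Y$ introduced earlier in Section~\ref{qeriughqifqwcqeqc}, whereas you write $v_i^*\rho v_i$ and $v_i^*\mu_{|\cP_{X_i}}v_i$ directly).
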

\begin{proof}
We will construct functors
$$\phi\colon \bCgtsmc\big(\bigsqcup^{\free}_{i\in I}X_{i}\big) \to \prod_{i\in I}\bCgtsmc(X_{i})\, , \quad \psi\colon \prod_{i\in I}\bCgtsmc(X_{i})\to\bCgtsmc\big(\bigsqcup^{\free}_{i\in I}X_{i}\big)$$
and then show that they are  inverse to each other up to unitary isomorphism.

We start with the construction of $\phi$.
\begin{enumerate}
\item objects: Let $(C,\rho,\mu)$ be an object of $\bCgtsmc(\bigsqcup^{\free}_{i\in I}X_{i})$. By Lemma \ref{wtigowgfrefrewf}  for every $i$ in $I$ we choose an image $(C_{i},u_{i})$ of the projection $\mu(X_{i})$. We further define $\rho_{i} = ( \rho_{g,X_{i}})_{g\in G}$ as in \eqref{refrewhiu3hfiuherifwerfrfw}, where $\rho_{g,X_{i}} \colon C_{i}\to gC_{i}$ (since $X_{i}$ is $G$-invariant), and  $\mu_{i} \coloneqq \mu_{X_{i}}$ by \eqref{fewveoivhroivjoieverwcecwec}. In this way we get an object
$$\phi(C,\rho,\mu) \coloneqq ((C_{i},\rho_{i},\mu_{i}))_{i\in I}$$ of $ \prod_{i\in I}\bCgtsmc(X_{i})$.
\item morphisms: Let $A \colon (C,\rho,\mu)\to (C',\rho',\mu')$  be a morphism in $\bCgtsmc(\bigsqcup^{\free}_{i\in I}X_{i})$. Note that 
the subsets $X_{i}$ of the free union are mutually coarsely disjoint. Therefore $A$ does not propagate between different components. 
For every $i$ in $I$ we consider the restriction $A_{i} \coloneqq u_{i}^{*}Au_{i}$ of $A$ to $C_{i}$. Then we have $\sup_{i\in I}\|A_{i}\|=\|A\|$. We define the morphism $\phi(A)$ in $\prod_{i\in I}\bCgtsmc(X_{i})$ by $$\phi(A) \coloneqq (A_{i})_{i\in I}\colon ((C_{i},\rho_{i},\mu_{i}))_{i\in I}\to ((C'_{i},\rho'_{i},\mu'_{i}))_{i\in I}\, .$$   
\end{enumerate}
One checks in a straightforward manner that this construction defines a  functor $\phi$  in $\Ccat$.  
 Note that it depends on the choices made for the images of the projection $\mu$.


We now define the functor $\psi$.
\begin{enumerate}
\item objects: Let  $((C_{i},\rho_{i},\mu_{i}))_{i\in I}$ be an object of  $\prod_{i\in I}\bCgtsm(X_{i})$. Since we assume that  $\bC$ admits all very small orthogonal AV-sums  we can choose an orthogonal AV-sum
$(\bigoplus_{i\in I}C_{i}, (e_{i})_{i\in I})$. Using 
  \cite[Lem. 7.8]{cank}  we  obtaine the last two entries of the right-hand side in
$$\psi(((C_{i},\rho_{i},\mu_{i}))_{i\in I}) \coloneqq (\bigoplus_{i\in I}C_{i},\oplus_{i\in I}\rho_{i} ,\oplus_{i\in I} \mu_{i})\, .$$
One checks again in a straightforward manner that this object of $\bCgtsmc(\bigsqcup^{\free}_{i\in I}X_{i})$ is well-defined.
\item morphisms: Let  $$(A_{i})_{i\in I} \colon ((C_{i},\rho_{i},\mu_{i}))_{i\in I}\to ((C'_{i},\rho'_{i},\mu'_{i}))_{i\in I}$$ be a morphism in
$\prod_{i\in I}\bCgtsm(X_{i})$. Since morphisms in a 
 product of $C^{*}$-categories are precisely the uniformly bounded families of morphisms 
  we have $\sup_{i\in I}\|A_{i}\|<\infty$ and can use \cite[Lem. 7.8]{cank}  to define
a morphism $$\oplus_{i\in I}A_{i} \colon \bigoplus_{i\in I}C_{i}\to \bigoplus_{i\in I}C'_{i}$$ in $\bM\bC$.
One checks that it is compatible with the cocycles, i.e.,    $\oplus_{i\in I}A_{i}$ is a morphism in ${(\bM\bC)}^{G} (\bigsqcup^{\free}_{i\in I}X_{i})$.
It remains to show that it  is a morphism in the Roe category.  Fix $\varepsilon$ in $(0,\infty)$. Then for every $i$ in $I$ there exists an entourage $U_{i}$ of $X_{i}$ and a $U_{i}$-controlled operator $A_{i}' \colon (C_{i},\rho_{i},\mu_{i})\to (C'_{i},\rho_{i}',\mu_{i}')$ in $ \bCgtsmc(X_{i})$ such that $\|A_{i}-A_{i}'\|\le \varepsilon$. Then $\oplus_{i\in I}A_{i}'$ is in $ \bCgtsmc (\bigsqcup^{\free}_{i\in I}X_{i})$. It is   controlled by the entourage $\bigcup_{i\in I} U_{i}$ in the coarse structure of the free union, and we have $\|\oplus_{i\in I}A_{i}-\oplus_{i\in I}A_{i}'\|\le \varepsilon$. Since we can choose $\varepsilon$ arbitrary small we conclude that
$\oplus_{i\in I}A_{i}$ is a morphism in $\bCgtsmc(\bigsqcup^{\free}_{i\in I}X_{i})$.
\end{enumerate}
One checks in a straightforward manner that this construction defines a functor $\psi$  in $\Ccat$. It depends on the choices involved in the sums.

We now show that $\phi$ and $\psi$ are mutually inverse to each other.

\begin{enumerate}
\item {The unitary} $u \colon \phi\circ \psi \xrightarrow{\cong} \id$: Let $((C_{i},\rho_{i},\mu_{i}))_{i\in I}$ be an object of  $\prod_{i\in I}\bCgtsmc(X_{i})$.
For every $j$ in $I$  let   $((\bigoplus_{i\in I} C_{i})_{j},u'_{j})$  be the choice of an image of $(\oplus_{i\in I}\mu_{i})(X_{j})$  in $\bM\bC$ adopted in the construction of $\phi$. We define
  the multiplier morphism
$$a_{j} \coloneqq e_{j}^{*} u'_{j} \colon (\bigoplus_{i\in I} C_{i})_{j}\to C_{j}\, .$$  
We get a unitary multiplier 
$$u_{((C_{i},\rho_{i},\mu_{i}))_{i\in I}} \coloneqq (a_{j})_{j\in I}\colon ((\bigoplus_{i\in I} C_{i})_{j})_{j\in I} \to (C_{j})_{j\in I}\, .$$
The family $ (u_{((C_{i},\rho_{i},\mu_{i}))_{i\in I}})_{((C_{i},\rho_{i},\mu_{i}))_{i\in I}\in \Ob(\prod_{i\in I}\bCgtsmc(X_{i}))}$
is the desired unitary isomorphism of functors.
\item {The unitary} $v\colon\psi\circ \phi  \stackrel{\cong}{\to}  \id    $: Let   $(C,\rho,\mu)$ be an object of $\bCgtsmc(\bigsqcup^{\free}_{i\in I}X_{i})$. 
 We  use Lemma \ref{eriogjqwefqewfewfeqdewdq} in order to conclude that  
$$v_{(C,\rho,\mu)} \coloneqq \sum_{i\in I} u_{i}e_{i}^{*} \colon \bigoplus_{i\in I}C_{i} \to C\, .$$ 
 is a unitary multiplier.
Then the family $(v_{(C,\rho,\mu)})_{(C,\rho,\mu)\in \Ob(\bCgtsmc(\bigsqcup^{\free}_{i\in I}X_{i}))}$ is the desired unitary isomorphism of functors.
\end{enumerate}

We now show that $\phi$ has the asserted property \eqref{eq_additivity_functor}.
Let $i$ be in $I$. We construct the unitary isomorphism
$$w \colon \pr_{i} \circ \phi \circ f_{i,*}\xrightarrow{\cong} \id_{\bCgtsmc(X_{i})}$$
{as follows.} Let $(C,\rho,\mu)$ be an object in $\bCgtsmc(X_{i})$. Then  the multiplier 
$u_{i} \colon (f_{*}C)_{i}\to C$ is a unitary, where $((f_{*}C)_{i},u_{i})$ denotes the choice of the image 
of $(f_{i,*}\mu)(X_{i})$  in $\bM\bC$ involved in the construction of $\phi$.  
We set $w_{(C,\rho,\mu)} \coloneqq u_{i}
$.
The family $(w_{(C,\rho,\mu)})_{(C,\rho,\mu)\in \Ob(\bCgtsmc(X_{i}))}$ is the desired unitary isomorphism.

If $i,j$ are in $I$ and $i\not=j$, then  $ (f_{i,*}C)_{j}=0$. Consequently, $\pr_{j} \circ \phi \circ f_{i,*}\cong 0$.
\end{proof}

\begin{proof}[{Proof of Theorem \ref{rhrherthertgergertg}}]
Since $\Kcat \bC\cX^{G}_{c}$ is excisive, it is $\pi_{0}$-excisive (Definition~\ref{qerogfjqeropfewfqwffe}).
Let $(X_{i})_{i\in I}$ be a family in $G\BC$. For $j$ in $J$ we consider the  invariant coarsely disjoint partition
$(X_{j},\bigcup_{i\in I\setminus\{j\}} X_{i})$ of $\bigsqcup^{\free}_{i\in I}X_{i}$.
Since $\Kcat\bC\cX^{G}_{c}$ is $\pi_{0}$-excisive we get a decomposition
$$\Kcat\bC\cX^{G}_{c}\big(\bigsqcup^{\free}_{i\in I}X_{i}\big) \simeq \Kcat\bC\cX^{G}_{c}(X_{j})\oplus  \Kcat\bC\cX^{G}_{c}\big(\bigsqcup^{\free}_{i\in I\setminus\{j\}}X_{i}\big)\,.$$
We let
$$p_{j}\colon \Kcat\bC\cX^{G}_{c}\big(\bigsqcup^{\free}_{i\in I}X_{i}\big) \to  \Kcat\bC\cX^{G}_{c}(X_{j})$$ be the {corresponding} projection onto the first summand.
We must show that
$$(p_{j})_{j\in J}\colon \Kcat\bC\cX^{G}_{c}\big(\bigsqcup^{\free}_{i\in I}X_{i}\big)\to \prod_{j\in I} \Kcat\bC\cX^{G}_{c}(X_{j})$$
is an equivalence.
We consider the diagram
$$\xymatrix{\Kcat\bC\cX^{G}_{c}(\bigsqcup^{\free}_{i\in I}X_{i})\ar[rr]^{(p_{i})_{i\in J}}\ar@{=}[d]^{\text{defn.}}&& \prod_{i\in I} \Kcat\bC\cX^{G}_{c}(X_{i})\ar@{=}[d]^{\text{defn.}}\\
\Kcat(\bCgtsmc(\bigsqcup^{\free}_{i\in I}X_{i}))\ar[dr]^{\quad\simeq,\Kcat(\phi)}_{\text{Lemma}~\ref{rgqoijfoewfewfqweff}\qquad}&&\prod_{i\in I}\Kcat(\bCgtsmc (X_{i}))\ar[dd]^{\pr_{k}}\\
&\Kcat(\prod_{j\in I}\bCgtsmc( X_{j})) \ar[dr]^{\quad\Kcat(\pr_{k})} \ar[ur]^{\simeq}_{!}&\\
\Kcat(\bCgtsmc(X_{j}))\ar[uu]^{\Kcat(f_{j,*})}  \ar[rr]^{\delta_{jk}\id}&&\Kcat(\bCgtsmc(X_{k}))}$$
where the marked equivalence together with commutativity of the right triangle for every $k$ in $I$ are stated in \cite[Thm.~15.7]{cank}, and commutativity of the lower left square is asserted by \eqref{eq_additivity_functor}. The big outer square commutes for every $j,k$ in $J$ in view of the construction of the map $p_{k}$. This implies that the middle pentagon commutes as well.
This finally implies the assertion of the theorem.
\end{proof}

\section{CP-functors}
\label{sec_CPfunctors}

Let $G$ be a group and let $G\Orb$ denote its orbit category.
In this section we describe  a functor $G\Orb\to \Sp$   associated to  $ \bC$ in
$\Fun(BG, \nCcat)$ admitting all very small AV-sums.
The main theorem states that this functor is a CP-functor.  

 Since $\bC^{u}$ is in particular finitely additive we have  the functor
{$$ \bC^{u} [-]\rtimes_{r}G \colon G\Set\to \Ccat$$ from} \eqref{qwefktbvrtfavkoavd}.
We can consider the functor
\begin{equation}\label{qrwfnofdewffeqewfefeqwfqfwef}
\Kcat( \bC^{u} [-]\rtimes_{r}G) \colon G\Set \to \Sp\, .
\end{equation}
\begin{ddd}\label{qergijoqergwrefweqfwqefwedewdqewd}
We define $\Kcat \bC_{G,r} \colon G\Orb\to \Sp$ to be the restriction of \eqref{qrwfnofdewffeqewfefeqwfqfwef} to the orbit category.
\end{ddd}

Before we can state the main theorem about $\Kcat \bC_{G,r}$ we must recall the notion of a CP-functor.
We have the  functor
$$i \colon G\Orb\to G\BC\,, \quad S\mapsto S_{min,max}$$
(see Example \ref{qrgioqjrgoqrqfewfeqfqewfe}).
We consider a functor $$M \colon G\Orb\to \bM\, .$$
\begin{ddd}[{\cite[Defn.~1.8]{desc}}]\label{rgioqrgfwrefwefwerfwer}
$M$ is a CP-functor if
\begin{enumerate}
\item \label{werhijowegrefrefwfrf}$\bM$ is stable, admits all very small colimits and limits, and  is compactly generated.
\item There exists an equivariant coarse homology theory $E \colon G\BC\to \bM$ (see Definition \ref{wefuihqfwefwffqwefefwq})
such that:
\begin{enumerate}
\item\label{qwrijogfqreefvfv} $M$ is equivalent to $E_{G_{can,min}}\circ i$ (see Examples \ref{regiuehrifwefqfewffqewedqe} \& \ref{etwgokergpoergegregegwergrg}).
\item\label{qwrijogfqreefvfv1}
\begin{enumerate}
\item\label{wegiowjegerfwefwerfwerfwrf} $E$ is continuous (Definition \ref{weifhqewiefjeefqefwefqwef}).
\item $E$ is strongly additive (Definition \ref{ergoijerwogergerfwfref9}).
\item $E$ admits transfers (Definition \ref{poqjkpowckwdcqwcqwcwc}).
\end{enumerate}
\end{enumerate}
\end{enumerate}
\end{ddd}

\begin{theorem}\label{wergiojerfqerwfqwefwefewfqef}
If $  \bC$ in $\Fun(BG,\nCcat)$ admits all very small AV-sums, then 
 the functor $\Kcat \bC_{G,r}$ is a CP-functor.
\end{theorem}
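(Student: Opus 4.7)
The plan is to exhibit the equivariant coarse homology theory $E \coloneqq \Kcat \bC\cX^{G}_{c} \colon G\BC \to \Sp$ as the witness that $\Kcat \bC_{G,r}$ is a CP-functor, and then verify the four conditions of Definition \ref{rgioqrgfwrefwefwerfwer} one by one by assembling results already established in the preceding sections.

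First, I would note that $\Sp$ is stable, admits all very small limits and colimits, and is compactly generated, so Condition \ref{rgioqrgfwrefwefwerfwer}.\ref{werhijowegrefrefwfrf} is automatic. For Condition \ref{rgioqrgfwrefwefwerfwer}.\ref{qwrijogfqreefvfv}, observe that for $S$ in $G\Orb$ we have, by definition of the twist (Example \ref{regiuehrifwefqfewffqewedqe}) and Definition \ref{qergijoqergwrefweqfwqefwedewdqewd},
\[
E_{G_{can,min}}(i(S)) = \Kcat \bC\cX^{G}_{c}(S_{min,max} \otimes G_{can,min}) \simeq \Kcat({}_{\bK}\mathbf{C}_{\mathrm{lf}}[S]\rtimes_{r} G) = \Kcat \bC_{G,r}(S)\,,
\]
where the middle equivalence is supplied by Proposition \ref{qergioqefweqwecqcasdc}(\ref{egijogrgergewrgwegwrewg2}) after applying $\Kcat$. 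Since that proposition records the equivalence as natural in $\tilde X$, restriction along $G\Orb \hookrightarrow G\Set$ produces the required natural equivalence of functors $G\Orb \to \Sp$.

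For the three remaining conditions listed in \ref{rgioqrgfwrefwefwerfwer}.\ref{qwrijogfqreefvfv1}, I would invoke the main structural theorems proven earlier in the paper. Recall that $\Kcat \colon \nCcat \to \Sp$ is a finitary homological functor by \cite[Thm.~12.4]{cank}, and that the standing assumption $\tilde \bK \subseteq \tilde \bC \in \Fun(BG, \bP(\nCcat)^{\alcc})$ matches the hypotheses of Theorems \ref{ergoiegwergewrgwergergwer} and \ref{rhrherthertgergertg}. Then:
\begin{enumerate}
\item Continuity of $E$ is exactly Theorem \ref{ergoiegwergewrgwergergwer}.
\item Strong additivity of $E$ is exactly Theorem \ref{rhrherthertgergertg}.
\item The existence of transfers for $E$ follows from Theorem \ref{qwrgiherogwefewfqewfqwefqwfewf} applied to $\Homol = \Kcat$, whose hypothesis (that $\Homol$ sends unitary equivalences to equivalences) is part of being a homological functor; the underlying transfer structure on $\bCgtsmc$ itself is supplied by Theorem \ref{ethgiuwieogfergwgrergweg}.
\end{enumerate}

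The proof is then complete by assembling these citations; no genuinely new argument is required. The only mild subtlety, and the one I would expect a careful referee to insist on, is that Condition \ref{rgioqrgfwrefwefwerfwer}.\ref{qwrijogfqreefvfv} demands an equivalence of functors, so one should check that the identification in Proposition \ref{qergioqefweqwecqcasdc}(\ref{egijogrgergewrgwegwrewg2}) is natural with respect to morphisms $S \to S'$ in $G\Orb$ (not merely with respect to general maps of $G$-sets where one might worry about properness or the minimal bornology on $S_{min,max}$). This is however immediate because every morphism $S \to S'$ in $G\Orb$ is automatically proper between $S_{min,max}$ and $S'_{min,max}$ (the bornology is maximal), hence a morphism in $G\BC$, and Proposition \ref{qergioqefweqwecqcasdc} is stated naturally in its $G\Set$-argument.
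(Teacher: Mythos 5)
Your proposal is correct and follows essentially the same route as the paper's own proof: choose $E = \Kcat\bC\cX^{G}_{c}$, identify $E_{G_{can,min}}\circ i$ with $\Kcat\bC_{G,r}$ via Proposition~\ref{qergioqefweqwecqcasdc}(\ref{egijogrgergewrgwegwrewg2}) after applying $\Kcat$, and then cite Theorems~\ref{ergoiegwergewrgwergergwer}, \ref{rhrherthertgergertg}, and~\ref{qwrgiherogwefewfqewfqwefqwfewf} for continuity, strong additivity, and transfers respectively. Your closing remark on naturality of the identification over $G\Orb$ is a sensible and correct point of due diligence, though the paper treats it as implicit.
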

\begin{proof}
The target category of the functor $\Kcat \bC_{G,r}$ is the large category $\Sp$ of small spectra. This category is stable, compactly generated and admits all   small colimits and limits. This verifies Condition \ref{rgioqrgfwrefwefwerfwer}.\ref{werhijowegrefrefwfrf}.

For $E$ in Definition  \ref{rgioqrgfwrefwefwerfwer}  we take the equivariant coarse homology theory
$\Kcat \bC\cX^{G}_{c}$ obtained by specializing Definition \ref{qwregojwoergwergerwffwfwerfwref}.\ref{rgoijweorgwerferwfwerfwerfw} to the case $\Homol=\Kcat$.

By applying $\Kcat$ to the equivalence stated in Proposition \ref{qergioqefweqwecqcasdc}.\ref{egijogrgergewrgwegwrewg2} we  obtain an equivalence 
$$\Kcat \bC\cX^{G}_{c,G_{can,{min}}}\circ i\simeq  \Kcat \bC_{G,r}\, .$$
This verifies Condition \ref{rgioqrgfwrefwefwerfwer}.\ref{qwrijogfqreefvfv}.

The coarse homology theory   $\Kcat \bC\cX^{G}_{c}$ has the  properties required in \ref{rgioqrgfwrefwefwerfwer}.\ref{qwrijogfqreefvfv1}:
\begin{enumerate}
\item It is continuous by Theorem \ref{ergoiegwergewrgwergergwer}.
\item It is strongly additive by Theorem \ref{rhrherthertgergertg}.
\item  It admits transfers by Theorem \ref{qwrgiherogwefewfqewfqwefqwfewf} which is applicable for $\Homol=\Kcat$ since $\Kcat$, being a homological functor, sends unitary equivalences to equivalences.\qedhere
\end{enumerate}
\end{proof}

\bibliographystyle{alpha}
\bibliography{forschung}

\end{document}